\theoremstyle{plain}
\newtheorem{prop}{Proposition}[section]
\newtheorem{thm}[prop]{Theorem}
\newtheorem{coro}[prop]{Corollary}
\newtheorem{lemma}[prop]{Lemma}
\newtheorem*{thm*}{Theorem}
\theoremstyle{definition}
\newtheorem*{defi}{Definition}
\theoremstyle{remark}
\newtheorem{remark}[prop]{Remark}
\numberwithin{table}{section}
\DeclareMathOperator{\Frob}{Frob}
\DeclareMathOperator{\Gal}{Gal}
\DeclareMathOperator{\Aut}{Aut}
\DeclareMathOperator{\End}{End}
\DeclareMathOperator{\Hom}{Hom}
\DeclareMathOperator{\Sch}{Sch}
\DeclareMathOperator{\Trace}{Tr}
\DeclareMathOperator{\Isom}{Isom}
\DeclareMathOperator{\Ind}{Ind}
\DeclareMathOperator{\HH}{H}
\DeclareMathOperator{\identity}{id}
\DeclareMathOperator{\ord}{ord}
\DeclareMathOperator{\Spec}{Spec}
\DeclareMathOperator{\sign}{sign}
\newcommand{\bfP}{\mathcal P}
\newcommand{\F}{\mathbb F}
\newcommand{\Om}{{\mathscr{O}}}
\newcommand{\Siegel}{\mathfrak{H}}
\newcommand{\GL}{{\rm GL}}
\newcommand{\SL}{{\rm SL}}
\def\ZZ{\mathbb Z}
\def\NN{\mathbb N}
\def\RR{\mathbb R}
\def\FF{\mathbb F}
\def\QQ{\mathbb Q}
\def\PP{\mathbb P}
\def\CC{\mathbb C}
\def\ff{\mathfrak f}
\def\hh{\mathfrak h}
\def\<#1>{{\left\langle{#1}\right\rangle}}
\def\Z{{\mathbb Z}}             
\def\Q{{\mathbb Q}}             
\def\R{{\mathbb R}}             
\def\q{{\mathfrak q}}  
\def\set#1{{\left\{{\def\st{\;:\;}#1}\right\}}}
\def\id#1{{\mathfrak{#1}}}      
\def\normid#1{{\norm{\id{#1}}}}
\DeclareMathOperator{\norm}{{\mathscr N}}
\DeclareMathOperator{\trace}{{\mathrm{Tr}}}
\def\YEAR{\year}\newcount\VOL\VOL=\YEAR\advance\VOL by-1995
\def\firstpage{1}\def\lastpage{1000}
\def\received{}\def\revised{}
\def\communicated{}
\def\magnification{\afterassignment\m@g\count@}
\def\m@g{\mag=\count@\hsize6.5truein\vsize8.9truein\dimen\footins8truein}
\font\eightrm=cmr8
\font\caps=cmcsc10                    
\font\Caps=cmcsc10 scaled \magstep1   
\def\DocMath{}
\renewcommand{\@evenhead}{%
    \ifnum\thepage>\lastpage\rlap{\thepage}\hfill%
    \else\rlap{\thepage}\slshape\leftmark\hfill{\caps\SAuthor}\hfill\fi}%
\renewcommand{\@oddhead}{%
    \ifnum\thepage=\firstpage{\DocMath\hfill\llap{\thepage}}%
    \else{\slshape\rightmark}\hfill{\caps\STitle}\hfill\llap{\thepage}\fi}%
\def\TSkip{\bigskip}
\newbox\TheTitle{\obeylines\gdef\GetTitle #1
\ShortTitle  #2
\SubTitle    #3
\Author      #4
\ShortAuthor #5
\EndTitle
{\setbox\TheTitle=\vbox{\baselineskip=20pt\let\par=\cr\obeylines%
\halign{\centerline{\Caps##}\cr\noalign{\medskip}\cr#1\cr}}%
	\copy\TheTitle\TSkip\TSkip%
\def\next{#2}\ifx\next\empty\gdef\STitle{#1}\else\gdef\STitle{#2}\fi%
\def\next{#3}\ifx\next\empty%
    \else\setbox\TheTitle=\vbox{\baselineskip=20pt\let\par=\cr\obeylines%
    \halign{\centerline{\caps##} #3\cr}}\copy\TheTitle\TSkip\TSkip\fi%
\centerline{\caps #4}\TSkip\TSkip%
\def\next{#5}\ifx\next\empty\gdef\SAuthor{#4}\else\gdef\SAuthor{#5}\fi%
\ifx\received\empty\relax
    \else\centerline{\eightrm Received: \received}\fi%
\ifx\revised\empty\TSkip%
    \else\centerline{\eightrm Revised: \revised}\TSkip\fi%
\ifx\communicated\empty\relax
    \else\centerline{\eightrm Communicated by \communicated}\fi\TSkip\TSkip%
\catcode'015=5}}\def\Title{\obeylines\GetTitle}
\def\Abstract{\begingroup\narrower
    \parskip=\medskipamount\parindent=0pt{\caps Abstract. }}
\def\EndAbstract{\par\endgroup\TSkip}
\long\def\MSC#1\EndMSC{\def\arg{#1}\ifx\arg\empty\relax\else
     {\par\narrower\noindent%
     2000 Mathematics Subject Classification: #1\par}\fi}
\long\def\KEY#1\EndKEY{\def\arg{#1}\ifx\arg\empty\relax\else
	{\par\narrower\noindent Keywords and Phrases: #1\par}\fi\TSkip}
\newbox\TheAdd\def\Addresses{\vfill\copy\TheAdd\vfill
    \ifodd\number\lastpage\vfill\eject\phantom{.}\vfill\eject\fi}
{\obeylines\gdef\GetAddress #1
\Address #2 
\Address #3
\Address #4
\EndAddress
{\def\xs{4.3truecm}\parindent=0pt
\setbox0=\vtop{{\obeylines\hsize=\xs#1\par}}\def\next{#2}
\ifx\next\empty 
     \setbox\TheAdd=\hbox to\hsize{\hfill\copy0\hfill}
\else\setbox1=\vtop{{\obeylines\hsize=\xs#2\par}}\def\next{#3}
\ifx\next\empty 
     \setbox\TheAdd=\hbox to\hsize{\hfill\copy0\hfill\copy1\hfill}
\else\setbox2=\vtop{{\obeylines\hsize=\xs#3\par}}\def\next{#4}
\ifx\next\empty\ 
     \setbox\TheAdd=\vtop{\hbox to\hsize{\hfill\copy0\hfill\copy1\hfill}
                \vskip20pt\hbox to\hsize{\hfill\copy2\hfill}}
\else\setbox3=\vtop{{\obeylines\hsize=\xs#4\par}}
     \setbox\TheAdd=\vtop{\hbox to\hsize{\hfill\copy0\hfill\copy1\hfill}
	        \vskip20pt\hbox to\hsize{\hfill\copy2\hfill\copy3\hfill}}
\fi\fi\fi\catcode'015=5}}\gdef\Address{\obeylines\GetAddress}
\begin{document}
\Title Modularity of the Consani-Scholten quintic
\ShortTitle Modularity of the Consani-Scholten quintic
\SubTitle   
with an Appendix by Jos\'e Burgos Gil and Ariel Pacetti
\Author 

Luis Dieulefait, Ariel Pacetti \footnote{\eightrm AP partially supported by CONICET PIP 2010-2012 and FonCyT BID-PICT 2010-0681.} and Matthias Sch\"utt \footnote{\eightrm MS partially supported by DFG under grant Schu 2266/2-2 and by ERC through StG~279723 (SURFARI).}$^,$
\footnote{\eightrm JBG partially supported by grants MTM2009-14163-c02-01 and CSIC-2009501001.} 

\ShortAuthor Luis Dieulefait, Ariel Pacetti and Matthias Sch\"utt 
\EndTitle

\Abstract 
We prove that the Consani-Scholten quintic, a Calabi-Yau threefold
over $\Q$, is Hilbert modular.  For this, we refine several techniques
known from the context of modular forms.  Most notably, we extend the
Faltings-Serre-Livn\'e method to induced four-dimensional Galois
representations over $\Q$.  We also need a Sturm bound for Hilbert
modular forms; this is developed in an appendix by Jos\'e Burgos Gil
and the second author.

\EndAbstract

\MSC Primary: 11F41; Secondary: 11F80, 11G40, 14G10, 14J32
\EndMSC
\KEY Consani-Scholten quintic, Hilbert modular form, Faltings--Serre--Livn\'e method, Sturm bound
\EndKEY
\Address 
Luis Dieulefait, 
Departament d'\`Algebra i Geometria,
Facultat de Matem\`atiques, 
Universitat de Barcelona, 
Gran Via de les Corts Catalanes, 585. 
08007 Barcelona.
ldieulefait@ub.edu

\Address 
Ariel Pacetti
Departamento de Matem\'atica, 
Universidad de Buenos Aires, 
Pabell\'on I, Ciudad Universitaria. C.P:1428, 
Buenos Aires, Argentina
apacetti@dm.uba.ar
\Address Matthias Sch\"ut
Institut f\"ur Algebraische Geometrie, 
Leibniz Universit\"at Hannover, 
Welfengarten 1, 30167 
Hannover, Germany
schuett@math.uni-hannover.de
\Address Jos\'e Burgos Gil
ICMAT-CSIC
jiburgosgil@gmail.com
\EndAddress

\section{Introduction}

The modularity conjecture for Calabi-Yau threefolds defined over $\Q$
is a particular instance of the Langlands correspondence. Given a
Calabi-Yau threefold $X$ over $\Q$ we consider the compatible family
of Galois representations $\rho_\ell$ of dimension, say, $n$, giving
the action of $G_\Q = \Gal(\bar{\Q}/\Q)$ on $\HH^3(X_{\bar{\Q}},
\Q_\ell)$: the conjecture says that there should exist an automorphic
form $\pi$ of $\GL_n$ such that $\ell$-adic Galois representations
attached to $\pi$ are isomorphic to the representations $\rho_\ell$.  This implies that the
$L$-functions of $\pi$ and $\rho_\ell$ agree, at least up to finitely
many local factors. Observe that (according to Langlands
functoriality) $\pi$ should be cuspidal if and only if the
representations $\rho_\ell$ are absolutely irreducible.\\ The only
case where the conjecture is known in general (among Calabi-Yau
threefolds) is the rigid case, i.e., the case with $n=2$.  In this
case modularity was established by the first author and Manoharmayum
(cf.~\cite{DM}) under some mild local conditions. These local
assumptions are no longer required since it is known that modularity
of all rigid Calabi-Yau threefolds defined over $\Q$ follows from
Serre's conjecture and the later has recently been proved
(cf. ~\cite{KW1}, ~\cite{KW2}, \cite{d-rigid}, \cite{gy}). \\ It was observed by Hulek and
Verrill in ~\cite{HV} that the modularity result in ~\cite{DM} can be
extended to show modularity of those Calabi-Yau threefolds such that
the representations $\rho_\ell$ are (for every $\ell$) reducible and
have $2$-dimensional irreducible components. In fact, using Serre's
conjecture, one can show that this is true even if reducibility occurs
only after extending scalars, assuming that reducibility is a uniform
property, i.e., independent of $\ell$ (this uniformity follows for
instance from Tate's conjecture).
 \newline

In this paper,
we will prove modularity for a non-rigid Calabi-Yau threefold over $\Q$
such that the  representations $\rho_\ell$ are 
irreducible.
To our knowledge such an example has not been known before.
We sketch the basic set-up:

In \cite{consani}, Consani and Scholten consider a
quintic threefold $\tilde X$ which we will review in section
\ref{s:geom}. It has good reduction outside the set $\{2,3,5\}$ and
Hodge numbers:
\[
h^{3,0}=1=h^{2,1}, \quad h^{2,0}=0=h^{1,0} \text{ and }h^{1,1}=141.
\]
In particular the third \'etale cohomology is four dimensional. If we
fix a prime $\ell$, the action of the Galois group
$\Gal(\bar{\Q}/\Q)$ on the third \'etale cohomology gives a
$4$-dimensional representation
\[
\rho_\ell:\Gal(\bar{\Q}/\Q) \to \GL(\HH^3(\tilde X_{\bar{\Q}},\Q_\ell))
\simeq \GL_4(\Q_\ell).
\]
Let $F = \Q[\sqrt{5}]$ and $\Om_F$ its ring of integers. In \cite{consani} it
is shown that the restriction
\[
\rho_\ell|_{\Gal(\bar{\Q}/F)}:\Gal(\bar{\Q}/F) \to
\Aut\left(\HH^3(\tilde{X}_{\bar{\Q}},\Q_\ell)\otimes \overline{\Q_\ell}\right),
\]
is the direct sum of two $2$-dimensional representations (see Theorem 3.2
of \cite{consani}). 
More precisely, if $\lambda$ is a prime of
$\Om_F$ over $\ell$, then there exists a $2$-dimensional
representation
\[
\sigma_{\lambda} : \Gal(\bar{\Q}/F) \to \GL_2(\Om_\lambda),
\]
such that $\rho_\ell|_{\Gal(\bar{\Q}/F)}$ is a direct sum of $\sigma_\lambda$ and
$\sigma_\lambda'$ (the external conjugate of $\sigma_\lambda$) and $\rho_\ell = \Ind^\Q_F \sigma_\lambda$.

In the same work, an holomorphic Hilbert newform $\ff$ on $F$ of
weight $(2,4)$ and level $\id{c}_\ff = (30)$ is constructed, whose
$L$-series is conjectured to agree with that of $\sigma$. The aim of
this work is to prove this modularity result.

Let $\lambda$ be a prime of $F$ over a rational prime $\ell$. Let
$\Om_\lambda$ denote the completion at $\lambda$ of $\Om_F$.  Since
$\ff$ has $F$-rational eigenvalues, by the work of Taylor (see
\cite{taylor}), there exists a two-dimensional continuous
$\lambda$-adic Galois representation
\[
\sigma_{\ff,\lambda}:\Gal(\bar{\QQ}/F) \to \GL_2(\Om_\lambda),
\]
with the following properties:
$\sigma_{\ff,\lambda}$ is unramified outside $\ell \id{c}_\ff$,
and if $\id{p}$
is a prime of $F$ not dividing $\ell \id{c}_\ff$, then
\begin{eqnarray*}
\trace \sigma_{\ff,\lambda}(\Frob \id{p}) &=& \theta(T_\id{p}),\\
\det \sigma_{\ff,\lambda}(\Frob \id{p}) &=& \theta(S_\id{p}) \normid{p}.
\end{eqnarray*}
Here $T_{\id{p}}$ denotes the $\id{p}$-th Hecke operator,
$S_{\id{p}}$ denotes the diamond operator (given by the action of the
matrix $\left(\begin{smallmatrix}\alpha
  &0\\ 0&\alpha \end{smallmatrix}\right)$, for $\alpha =
\prod_{\id{p}} \pi_\id{q}^{v_\id{q}(\id{p})}$ and $\pi_\id{q}$ a
local uniformizer) and 
$\theta(T)$ is the eigenvalue of the Hecke operator $T$ on $\ff$.
Let
$\tau(\ff)$ be the Hilbert modular form which is the external Galois conjugate
of $\ff$, where $\tau$ is the order two element in $\Gal(F/\QQ)$.
Observe that the $\lambda$-adic Galois representations attached to $\tau(\ff)$
are obtained by applying $\tau$ to the traces of the images of Frobenius in the Galois representations attached to $\ff$. Our
result can be stated as:


\begin{thm} The representations $\sigma_\lambda$ and $\sigma_{\nu(\ff),\lambda}$ are isomorphic, where $\nu$
is
 either the identity or $\tau$.
\label{main-theorem}
\end{thm}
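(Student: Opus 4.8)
The plan is to identify the two compatible systems $\{\sigma_\lambda\}$ and $\{\sigma_{\ff^\nu,\lambda}\}$ (for one of the two choices of $\nu$) by means of an extended Faltings--Serre--Livn\'e argument, applied to a single well-chosen prime $\ell$. First I would pin down the ramification: the representation $\rho$ coming from $\HH^3(\tilde X)$ is unramified outside $\{2,3,5,\ell\}$, hence so is $\sigma_\lambda$ after restriction to $G_F$; the Hilbert form $\ff$ has conductor $(30)$, so $\sigma_{\ff,\lambda}$ is unramified outside the primes of $F$ above $\{2,3,5,\ell\}$. Thus both families live on the same finite set of ramified primes, and both have the same determinant (a fixed power of the cyclotomic character times a finite-order character), which one checks by comparing $\det\rho$ with the Hodge data and with $\theta(S_\id p)\normid p$. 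The reduction step would be to choose $\ell$ so that the residual representations $\bar\sigma_\lambda$ and $\bar\sigma_{\ff,\lambda}$ are absolutely irreducible with the same (small) image — e.g. $\ell=2$ or $\ell=3$ — so that Livn\'e's refinement of Faltings--Serre applies and reduces the isomorphism of the $\lambda$-adic representations to an equality of traces of Frobenius on an explicit \emph{finite} set of primes, a set governed by the field cut out by the mod-$\ell$ representation and its relation to the fixed field of the relevant quadratic characters.

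The genuinely new ingredient, and the step I expect to be the main obstacle, is that we must run this comparison at the level of the \emph{induced} four-dimensional representation $\rho=\Ind_F^\Q\sigma_\lambda$ over $\Q$, rather than directly over $F$, because it is $\rho$ (not $\sigma_\lambda$) that we can compute from the geometry via point counts on $\tilde X$ over $\F_p$. So I would first prove an ``induced'' version of the Faltings--Serre--Livn\'e criterion: two $2$-dimensional $\lambda$-adic representations of $G_F$ whose inductions to $G_\Q$ are residually isomorphic (with suitably large/controlled image) are isomorphic up to $\Gal(F/\Q)$-conjugacy provided the characteristic polynomials of Frobenius at a computable finite set of \emph{rational} primes agree. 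Concretely this means expressing $\trace\rho(\Frob p)$ and the other symmetric functions of the eigenvalues of $\rho(\Frob p)$ — for $p$ inert, split, or ramified in $F$ — in terms of $\theta(T_\id p)$, $\theta(T_{\id p'})$ and the norms, and then showing that matching these for finitely many $p$ forces $\sigma_\lambda\cong\sigma_{\ff^\nu,\lambda}$ for one choice of $\nu$ (the ambiguity $\nu\in\{1,\tau\}$ is intrinsic, since $\Ind_F^\Q\sigma=\Ind_F^\Q\sigma^\tau$, so the theorem can only determine $\sigma_\lambda$ up to this conjugation).

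The remaining steps are then comparatively mechanical but still require care. One must make the finite test set of primes fully explicit; for the Hilbert side this needs an effective bound — a Sturm bound for Hilbert modular forms over $F$ — guaranteeing that the relevant Hecke eigenvalues $\theta(T_\id p)$ are determined, and that a Hecke eigenform with the prescribed eigenvalues on that finite set is unique; this is exactly what the appendix by Burgos Gil and the second author supplies. On the geometric side one computes $\#\tilde X(\F_{p^k})$ for the primes $p$ in the test set and all needed $k$, extracts the $\HH^3$-part of the zeta function by removing the contributions of the other cohomology groups (which are of Tate type, with explicit eigenvalues determined by $h^{1,1}=141$ and Poincar\'e duality), and thereby reads off $\rho(\Frob p)$. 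Finally one checks the behaviour at the bad primes $2,3,5$ — that $\sigma_\lambda$ and $\sigma_{\ff,\lambda}$ have matching conductor exponents and inertial types there — which is needed both to control the ramification set and to handle the ramified-prime hypotheses in Livn\'e's method; here the subtle point is $\ell$ itself, where one must ensure the chosen $\ell$ is a prime of good behaviour (e.g. ordinary, or at least crystalline with matching Hodge--Tate weights $\{0,1,2,3\}$) so that no information is lost at $\lambda$. Matching all of this, Theorem~\ref{main-theorem} follows.
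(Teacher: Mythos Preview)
Your overall architecture is right --- work at a single prime, prove an induced four-dimensional variant of Faltings--Serre--Livn\'e, and verify equality of characteristic polynomials on an explicit finite test set of primes --- and you correctly note that only $\rho=\Ind_F^\Q\sigma$ is directly accessible from point counts. But there is a genuine gap in how you invoke Livn\'e, and it hides the main difficulty of the proof.

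Livn\'e's criterion does \emph{not} ask for the residual representations to be absolutely irreducible; it asks for precisely the opposite hypothesis: that all residual traces vanish (equivalently, that the residual image is a $2$-group). You have this backwards. The paper works at $\ell=2$, and the first hypothesis of the modified Livn\'e theorem (Theorem~\ref{Livne}) is $\trace(\rho_i)\equiv 0\pmod{\bfP_2}$. Now the subtlety you have missed entirely: $2$ is \emph{inert} in $F$, so the residual representations land in $\SL_2(\F_4)\cong A_5$, which is not solvable. A priori the residual image could be all of $A_5$, in which case Livn\'e's method does not apply at all. The bulk of the argument (Sections~\ref{s:image} and~\ref{s:geom}) is devoted to ruling this out --- proving that both $\bar\sigma_{\ff,2}$ and $\bar\sigma_2$ have image a $2$-group --- and this is where the Sturm bound actually enters: it is used to show that \emph{all} Hecke eigenvalues of $\ff$ lie in $2\Om_F$ (hence no order-$5$ elements in the residual image), not to pin down a finite list of eigenvalues as you suggest. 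The remaining groups $C_3$, $S_3$, $A_4$ are excluded by an explicit class-field-theory enumeration. On the geometric side, the even-trace condition is established by a direct $4$-divisibility argument for $\trace\rho(\Frob_q)$ using automorphisms of $\tilde X$ and the Lefschetz formula; your proposal does not account for this step at all.

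Two smaller points. First, no analysis of conductors, inertial types, or $p$-adic Hodge theory at the bad primes is needed or used: Livn\'e's method (and its non-quartic variant here) only requires matching traces and determinants on a finite test set of \emph{good} primes. Second, the test set is not governed by ``the field cut out by the mod-$\ell$ representation'' but by the compositum $K_S$ of all quadratic extensions of $F$ unramified outside $S=\{2,3,5\}$; one needs the images of $\Frob_t$ in $\Gal(K_S/K)\setminus\{0\}\subset\F_2^5$ to form a \emph{non-quartic} set (the quartic, rather than cubic, condition arises because the coefficient ring $\Om_{E}$ has rank $2$ over $\Z_2$, doubling the relevant dimension in Livn\'e's nilpotence argument).
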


In particular the theorem implies that the $L$-series of
$\sigma_\lambda$ and $\sigma_{\nu(\ff),\lambda}$ agree.  This
solves the conjecture from \cite{consani}.

\begin{remark}
By known cases of automorphic base change (theta lift) and
functoriality, Theorem \ref{main-theorem} is known to imply that $\rho_\ell$ corresponds to a
Siegel modular form of genus $2$ and to a cuspidal automorphic form
of
$\GL_4$.
\end{remark}

\begin{remark}
Dimitrov has proved a Modularity Lifting Theorem that
applies to Hilbert modular forms of non-parallel weight (cf. ~\cite{Di}),
and he and the first author checked  that for $\ell=7$ the
representation $\sigma$ satisfies all the technical conditions of
this theorem (cf. ~\cite{DD}).
Thus it would be enough to prove residual modularity
modulo $7$ to deduce the modularity of $\sigma$ from this result. We
will follow, however, a different path.
\end{remark}

\begin{remark}
In 2006, one of us received a preprint by Y.-C.~Yi
which claimed to prove the Hilbert modularity of $\tilde X$.
However, the arguments presented  contained several crucial gaps and inaccuracies,
and to our knowledge, the preprint has never been published properly.
\end{remark}

We give an outline of the proof of Theorem \ref{main-theorem}.  Since
both Galois representations come in compatible families, it is enough
to prove that they are isomorphic for a specific choice of primes
$\lambda$ over $\ell$. We choose $\ell=2$ so as to apply a
Faltings-Serre method of proving that two given $2$-adic Galois
representations are isomorphic (cf. \cite{Livne}). Actually, in
\cite{consani} it is proven that $\sigma_\lambda$ exists, but its trace at a prime is only determined up to conjugation by $\tau$.
Hence in Theorem~\ref{Livne} we give a version of
the Faltings-Serre method that applies to reducible $4$-dimensional representations
of $\Gal(\bar\QQ/F)$.

Theorem \ref{Livne} implies that the 2-dimensional Galois representations 
$\sigma_\lambda, \sigma_{\nu(\ff),\lambda}$
have isomorphic
semisimplifications.
Since $\ff$ is a cuspidal Hilbert eigenform,
its $\lambda$-adic Galois representations are irreducible for all
primes $\lambda$, and Livn\'e's Theorem asserts in particular that the
same is true for the representations attached to $\tilde X$.
Thus we deduce Theorem \ref{main-theorem}.

\smallskip

In the present situation, the  problem is  non-trivial notably because
$2$ is inert in $\Om_F$.  Hence the residual representations lie a
priori in $\GL_2(\F_4)$, and actually in $\SL_2(\F_4)$. This is
clear for both representations: the representation
$\sigma_{\ff,\lambda}$ has trivial nebentypus so the determinant image
lies in $\F_2^\times$ while the representation $\sigma_{\lambda}$ at a prime
ideal $\id{p}$ has real determinant of absolute value
$\normid{p}^3$.

The group $\SL_2(\F_4)$ is not a solvable group (it is in fact
isomorphic to $A_5$, the alternating group of $5$ elements).  We will
overcome this subtlety by showing that the images of the residual
representations are $2$-groups.  For $\sigma_{\ff,2}$, this will be
achieved in section \ref{s:image} by combining three techniques: the theory of congruences between Hilbert cuspforms, the
explicit approach from \cite{dieulefait} and a Sturm bound for Hilbert
modular forms that is developed by Burgos and the second author in
the Appendix \ref{appendix:sturm}.  For $\sigma_2$, we will use the
Lefschetz trace formula and automorphisms on the Calabi-Yau threefold
$\tilde X$ (section \ref{s:geom}).  We collect the necessary data for
a proof of Theorem \ref{main-theorem} in section \ref{s:proof}.

\medskip

\noindent{\bf Acknowledgements:} We would like to thank Lassina
D\'embel\'e for many suggestions concerning computing with Hilbert
modular forms. Also we would like to thank Jos\'e Burgos Gil for his
contribution to the appendix with the proof of a Sturm bound. The
computations of the $a_{\id{p}}$ where done using the Pari/GP system
\cite{PARI2}. 
We would like to thank Bill Allombert for implementing a
routine in PARI  that was not included in the original software
for dealing with elements of small norm under a positive definite
quadratic form.
Particular thanks to the referee for his comments and suggestions.
This project was started when two of us enjoyed the hospitality of Harvard University;
it benefitted from research visits to various other institutions.

\section{Computing the residual image of the Galois representation $\sigma_{\ff,2}$}
\label{s:image}

This section deals with the holomorphic Hilbert newform $\ff$ on $F=\QQ(\sqrt{5})$ of
weight $(2,4)$ and conductor $\id{c}_\ff = (30)$ 
constructed in \cite{consani}.
The aim of this section is to prove that the image of the residual
$2$-adic Galois representation $\bar\sigma_{\mathfrak f,2}$ attached to $\ff$ has image a
$2$-group.
This will enable us to apply methods for even trace Galois representations
to $\sigma_{\mathfrak f,2}$.

\subsection{Properties of the Hilbert modular form $\ff$.}

Let us recall some definitions of Hilbert modular forms. For $\id{c}
\subset \Om_F$, let $\Gamma_0(\id{c})$ be the subgroup of
$\SL_2(\Om_F)$ whose second row and first column entry is divisible by
$\id{c}$.  Let $\Siegel$ denote the Poincare upper half plane. Then
given $\vec{k}=(k_1,k_2)$, with $k_1 \equiv k_2 \pmod 2$, a weight
$\vec{k}$ Hilbert modular form of level $\id{c}$ is an holomorphic
function $f:\Siegel^2 \mapsto \CC$ such that for
$\gamma=\left(\begin{smallmatrix} a&b\\ c&d\end{smallmatrix} \right)
  \in \Gamma_0(\id{c})$,
\[
f(\gamma\cdot z_1,\tau(\gamma) \cdot z_2) = f(z_1,z_2) (cz_1+d)^{k_1} (\tau(c)z_2 + \tau(d))^{k_2},
\]
with the usual holomorphicity condition at the cusps. We denote such
space $M_{\vec{k}}(\id{c})$, and $S_{\vec{k}}(\id{c})$ the subspace of
cuspidal forms.

We start studying some properties of the form $\ff$. In sections $6$
and $7$ of \cite{consani} such form was constructed using Eichler's
method on definite quaternion algebras and in Theorem $8.3$ of
loc.~cit.~it is proved that its coefficient field is exactly
$\QQ(\sqrt{5})$.

Since the primes $2$ and $3$ divide the level of the form $\ff$ to the
first power, the automorphic representation attached to $\ff$ is
Steinberg at both primes. To know its behaviour at the prime
$\sqrt{5}$, we consider its twist by a suitable character.

\begin{lemma}
There exists a unique non-trivial quadratic Hecke character
$\chi_{\sqrt{5}}$ of $\Om_F$ (of infinity type $(\sign,\sign)$), whose
conductor is $\sqrt{5}$. The quadratic twist of $\ff$ by
$\chi_{\sqrt{5}}$ corresponds to a Hilbert newform of level
$6\sqrt{5}$ and weight $(2,4)$ which we denote by $\ff\otimes
\chi_{\sqrt{5}}$.
\label{lemma:twist}
\end{lemma}

\begin{proof}
For any prime ideal $\id{p}$ of $\Om_F$, whose residue field
has prime order $p$, we can consider the quotient map 
\[
\Om_F \twoheadrightarrow \Om_F/\id{p} \simeq \ZZ/p\ZZ,
\]
to get a Dirichlet character $\chi_{\id{p}}$ in $\Om_F$. To get the
Hecke character we just need the infinity characters, but note that
the infinite type $(\sign^{\epsilon_1},\sign^{\epsilon_2})$ is
uniquely defined by the conditions
\begin{eqnarray*}
\chi_{\id{p}}(-1) &=& (-1)^{\epsilon_1+\epsilon_2},\\
\chi_{\id{p}}\left(\frac{1+\sqrt{5}}{2}\right) &=& (-1)^{\epsilon_2}.
\end{eqnarray*}
The uniqueness comes from the fact that the fundamental unit is not
totally positive (which is equivalent to say that the class number and
the narrow class number are the same). In our case,
$\chi_{\sqrt{5}}\left(\frac{1+\sqrt{5}}{2}\right) = -1$, and
$\chi_{\sqrt{5}}(-1)=1$ so the first assertion follows.


For the second statement of the Lemma, it is clear (as in the
classical case) that the twist will have trivial character and level at
most $30$ (see for example~\cite[Proposition 4.4]{shimura-hilbert}), so
the proof goes by elimination. The data used was supplied by Lassina
D\'embel\'e, but is now available in the new versions of MAGMA for
example. Here is a resume:
\begin{itemize}
\item The space $S_{(2,4)}(30)$ is of dimension $74$. 
\item Its subspace of newforms is of dimension $22$.
\item There are $16$ eigenforms in the new subspace whose coefficient
  field is $F$ and $2$ eigenforms whose coefficient field has degree
  $3$ over it.
\item The space $S_{(2,4)}(6\sqrt{5})$ is of dimension $14$.
\item Its subspace of newforms is of dimension $4$.
\item There are $4$ eigenforms in the new subspace whose coefficient field is $F$. 
\end{itemize}
By computing the Hecke eigenvalue at the primes above $11$ in the
newspace of level $30$, we find a unique form matching the eigenvalue
of $\ff$ (in version 2.16 of Magma, the form is the first one to
appear), so the form $\ff \otimes \chi_{\sqrt{5}}$ does not lie in the
newspace of level $30$. Since the primes of norm $11$ are generated by
$\frac{7+\sqrt{5}}{2}$ and $\frac{7-\sqrt{5}}{2}$, and
$\chi_{\sqrt{5}}\left(\frac{7+\sqrt{5}}{2}\right)=1$, we search for a
form with the same eigenvalue in level $6\sqrt{5}$ and find a unique
one (the third one), but none in level $6$, hence $\ff \otimes
\chi_{\sqrt{5}}$ has level $6\sqrt{5}$.
\end{proof}

This implies the following.

\begin{coro}
The form $\ff$ is also Steinberg at the prime $\sqrt{5}$.
\end{coro}

Since the form $\ff$ and $\ff \otimes \chi_{\sqrt{5}}$ are congruent modulo
$2$, we can work with the latter form which has smaller level. 

\subsection{Properties of the residual image of $\sigma_{\ff,2}$}

For proving that the image of the residual $2$-adic Galois
representation attached to $\ff$ has image a $2$-group, eventually we
will pursue a similar approach as in \cite{dieulefait}.  We start by
computing all subgroups of $A_5$.

\begin{lemma} Any proper subgroup of $A_5$ is isomorphic to one of the
  following:
\[
\{\{1\}, C_2, C_3, C_2 \times C_2, C_5, S_3, D_5, A_4\}.
\]
\label{lemma:cases}
\end{lemma}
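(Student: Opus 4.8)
The plan is to enumerate subgroups of $A_5$, which is a classical exercise in group theory, by organizing the search according to the order of the subgroup. Since $|A_5| = 60 = 2^2 \cdot 3 \cdot 5$, Lagrange's theorem tells us that the order of any proper subgroup divides $60$ and is strictly less than $60$, so it lies in $\{1, 2, 3, 4, 5, 6, 10, 12, 15, 20, 30\}$. I would immediately eliminate the orders $15$, $20$, and $30$: a subgroup of index $2$ would be normal, but $A_5$ is simple, so order $30$ is impossible; and a subgroup of order $15$ or $20$ would have prime-power-cyclic Sylow structure forcing normality of a Sylow subgroup inside it, and one checks that $A_5$ has no such subgroup either by a counting argument on elements of each order, or by the standard fact that the smallest index of a proper subgroup of $A_5$ is $5$ (coming from the permutation action), ruling out indices $2$, $3$, and $4$, hence orders $30$, $20$, and $15$.

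For the remaining orders I would identify the groups explicitly. Order $1$: the trivial group. Order $2$, $3$, $5$: these are cyclic of prime order, namely $C_2$ (generated by a product of two disjoint transpositions), $C_3$ (generated by a $3$-cycle), and $C_5$ (generated by a $5$-cycle); the Sylow theorems guarantee existence of the last two, and $C_2$ exists visibly. Order $4$: a Sylow $2$-subgroup; since $A_5$ contains no element of order $4$ (cycle types in $A_5$ are $1+1+1+1+1$, $3+1+1$, $2+2+1$, $5$, with orders $1,3,2,5$), any group of order $4$ must be $C_2 \times C_2$. Order $6$: such a subgroup is $C_6$ or $S_3$; there is no element of order $6$, so it must be $S_3$ (realized as the stabilizer-type subgroup permuting three points and fixing the other two, i.e.\ $\langle (123),(12)(45)\rangle$). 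Order $10$: such a subgroup is $C_{10}$ or $D_5$; no element of order $10$ exists, so it is $D_5$ (the normalizer of a Sylow $5$-subgroup). Order $12$: the options are $C_{12}$, $C_6\times C_2$, $D_6$, $A_4$, and the dicyclic group; all but $A_4$ contain an element of order $4$ or order $6$, which $A_5$ lacks, so such a subgroup is $A_4$ (the stabilizer of a point under the natural action on $5$ letters). Existence of $A_4$, $S_3$, $D_5$ follows from these concrete descriptions.

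There is essentially no hard part here; the only thing requiring a little care is ruling out orders $15$ and $20$, since unlike the other cases one cannot simply invoke "no element of that order" — a non-cyclic group of order $20$ or a group of order $15$ (which is necessarily cyclic) would not be detected by element orders alone. The cleanest route is to note that $A_5$ acts faithfully and transitively on $5$ points with point-stabilizer $A_4$ of order $12$, so by the orbit counting any proper subgroup has index at least $5$ (equivalently, $A_5$ has no subgroup of index $2$, $3$, or $4$, since such would give a nontrivial homomorphism $A_5 \to S_{\le 4}$ with nontrivial image, contradicting simplicity as $|S_4| = 24 < 60$); this kills orders $30$, $20$, $15$ simultaneously. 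Assembling the list for orders $1,2,3,4,5,6,10,12$ then yields exactly $\{1\}, C_2, C_3, C_2\times C_2, C_5, S_3, D_5, A_4$, as claimed. I would also remark that every isomorphism type in this list does occur, which the explicit generators above exhibit, so the list is sharp.
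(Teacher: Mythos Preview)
Your argument is correct. The enumeration by order, the use of simplicity to rule out indices $\le 4$ (hence orders $15$, $20$, $30$), and the case-by-case identification via element orders are all sound; the explicit generators you give confirm existence of each isomorphism type.

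Compared with the paper: the paper does not actually give a proof. Its entire argument reads ``This is a standard computation (also some software like Gap computes the subgroups of a small permutation group).'' So your write-up is not a different route so much as a full proof where the paper supplies none. What you gain is a self-contained, human-checkable argument that does not rely on outside references or a computer-algebra verification; what the paper's one-line dismissal gains is brevity for a fact that most readers will accept without justification. If anything, your version is more informative, and the only place one might trim is the order-$12$ case: it suffices to note that every group of order $12$ other than $A_4$ has a normal (hence unique) Sylow $3$-subgroup, so it contains an element of order $6$, which $A_5$ does not --- but your explicit list of the five isomorphism types works just as well.
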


There is an easy classification of the orders of the elements in
$\SL_2(\F_4)$ in terms of the traces.

\begin{lemma}
\label{lemma:group}
If $M \in \SL_2(\FF_4)$, then
\[
\text{ord}(M)=
\begin{cases}
1 & \text{ if } M = \identity,\\
2 & \text{ if } \trace(M)=0 \text{ and } M \neq \identity,\\
3 & \text{ if } \trace(M)=1,\\
5 & \text{ if } \trace(M) \not \in \FF_2.
\end{cases}
\]
\end{lemma}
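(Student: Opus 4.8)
The plan is to analyze the characteristic polynomial of an arbitrary $M \in \SL_2(\F_4)$ and read off the order directly. Write $t = \trace(M) \in \F_4$; since $M \in \SL_2(\F_4)$ we have $\det(M) = 1$, so by Cayley--Hamilton $M^2 = tM - \identity$. The case $M = \identity$ is immediate, so assume $M \neq \identity$.

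First I would dispose of the trace-zero case. If $t = 0$, then $M^2 = -\identity = \identity$ (characteristic $2$), and since $M \neq \identity$ this forces $\ord(M) = 2$; conversely an involution satisfies $M^2 = \identity$, and combined with $M^2 = tM - \identity$ this gives $tM = 2\,\identity = 0$, hence $t = 0$ as $M$ is invertible. Next the case $t = 1$: here $M^2 = M - \identity$, so $M^3 = M^2 - M = (M - \identity) - M = -\identity = \identity$; since $\ord(M) \neq 2$ (we just showed order-$2$ elements have trace $0$) and $M \neq \identity$, we get $\ord(M) = 3$ exactly. Finally, suppose $t \notin \F_2$, i.e.\ $t \in \F_4 \setminus \F_2$; then $t$ is a primitive cube root of unity, satisfying $t^2 + t + 1 = 0$. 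The eigenvalues of $M$ lie in $\overline{\F_2}$ and are the roots $\mu, \mu^{-1}$ of $x^2 - tx + 1$; since $t \notin \F_2$ this polynomial is irreducible over $\F_4$, so $\mu$ lies in $\F_{16} \setminus \F_4$ and is a root of unity of order dividing $15$. One checks that $\mu^3 \neq 1$ (else $t = \mu + \mu^{-1}$ would be in $\F_4$ with $t^3=1$, forcing $t\in\F_2$ after a short check) and $\mu^5 = 1$: indeed $\mu + \mu^{-1} = t$ together with $t^2+t+1=0$ yields, after clearing denominators, the relation $\mu^4 + \mu^3 + \mu^2 + \mu + 1 = 0$, which says $\mu$ is a primitive $5$th root of unity. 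Hence $M$ has order $5$. The four cases $M=\identity$, $t=0$, $t=1$, $t\notin\F_2$ exhaust $\SL_2(\F_4)$ since $\F_4$ has exactly these possibilities for $t$, so the classification is complete.

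The only mildly delicate point — the main obstacle, such as it is — is the last case: verifying that when $\trace(M) \notin \F_2$ the order is exactly $5$ and not some other divisor of $\#\SL_2(\F_4) = 60$. This is handled cleanly by passing to eigenvalues in $\F_{16}$ and using that $t^2 + t + 1 = 0$ translates into the cyclotomic relation $\sum_{i=0}^{4}\mu^i = 0$; alternatively one can simply invoke that $\SL_2(\F_4) \cong A_5$ and that an element with irreducible characteristic polynomial over $\F_4$ corresponds to a $5$-cycle. Everything else is a one-line computation with Cayley--Hamilton in characteristic $2$.
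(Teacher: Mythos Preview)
Your argument is correct; the only slightly awkward phrasing is in the ``$\mu^3 \neq 1$'' aside (the cleanest justification is simply that $\mu^3=1$ would force $\mu\in\F_4$, contradicting irreducibility), but the derivation of $\mu^4+\mu^3+\mu^2+\mu+1=0$ from $t^2+t+1=0$ is clean and settles the order-5 case. The paper itself gives no proof at all --- it declares the lemma ``quite elementary'' and leaves it to the reader --- so your Cayley--Hamilton/eigenvalue analysis is exactly the sort of verification the authors had in mind.
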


Recall how to derive a Fourier expansion at $\infty$ for a Hilbert
modular form over $F$.  Let $\tau$ denote the generator of
$\Gal(F/\Q)$.  An element $\nu\in F$ is called totally positive if
both $\nu>0$ and $\tau(\nu)>0$.  We denote this by $\nu\gg 0$.  Since
$F$ has strict class number one, any Hilbert modular form $G$ over $F$
has a $q$-expansion 

\begin{equation}
G(z_1,z_2) = \sum_{\substack{\xi \in \frac{\Om_F}{\sqrt{5}}
   \\ \xi\gg 0}}a_\xi \exp(\xi z_1
+\tau(\xi)z_2),
\label{eq:idealqexp}
\end{equation}
where $\exp(z) = e^{2\pi i z}$. 

Since the coefficient field for $\ff$ is $F=\QQ(\sqrt{5})$ and $2$ is
inert in $F$, we can reduce our coefficients modulo $2$ to get
a $q$-expansion with coefficients in $\FF_4$. We denote by
$\overline{a_\xi}$ the reduced coefficients in $\FF_4$, and we normalize the
$q$-expansion of $\ff$ such that its coefficient
$a_{\frac{1+\sqrt{5}}{2\sqrt{5}}}=1$.

\begin{lemma}
Suppose that some coefficient $\overline{a_\xi}$ is not in $\FF_2$. Then
the $q$-expansion of the form $\ff^3$ has a coefficient which is not
in $\FF_2$ either.
\end{lemma}

\begin{proof} The $q$-expansion of $\ff^3$ is again of the form 
\[
\sum_{\substack{\xi \in \frac{\Om_F}{\sqrt{5}}
   \\ \xi\gg 0}}b_\xi \exp(\xi z_1+\tau(\xi)z_2).
\]
We order the monomials of such $q$-expansion using the total order
given by: $\exp(\xi z_1 + \tau(\xi)z_2) < \exp(\nu z_1+\tau(\nu)z_2)$
iff $\trace(\xi) < \trace(\nu)$ or $\trace(\xi)=\trace(\nu)$ and $\xi
< \nu$. It is easy to check that this gives a total order, and that
this order behaves well under addition.

With this order, the first coefficients of $\ff$ are $a_0$ and
$a_{\frac{1+\sqrt{5}}{2\sqrt{5}}}$, which are $0$ and $1$. Suppose that
  $\overline{a_{\xi_0}}$ is the first coefficient of $\ff$ in $\FF_4$
  and not in $\FF_2$, and lets look at the coefficient with exponent
  $\xi_0+2\left(\frac{1+\sqrt{5}}{2\sqrt{5}}\right)$ of the reduction
  of $\ff^3$. Since all non-zero coefficients have trace at least $1$,
  it is of the form
\[
3 a_{\xi_0} a_{\frac{1+\sqrt{5}}{2\sqrt{5}}}^2 + \text{ product of
     coefficients smaller than } a_{\xi_0},
\]
so the claim follows.
\end{proof}

\begin{remark}
From the proof of the last Lemma, it is clear that if the reduction of
all Fourier coefficients of $\ff$ with trace smaller than a constant
$N$ lie in $\FF_2$, the same happens to $\ff^3$.
\end{remark}

Recall that $\Aut(\CC)$ acts on the space of Hilbert modular forms,
just by acting on Fourier expansions. The following result is due to
Shimura (see Proposition 1.2 of \cite{shimura-hilbert}).

\begin{prop}
Let $\sigma \in \Aut(\CC)$, then $\sigma(M_{\vec{k}}) = M_{\vec{l}}$, where $\vec{l} = \vec{k}^\sigma$.
\label{prop:shimuraconjugation}
\end{prop}


\begin{prop} 
The residual image of $\sigma_{\ff,2}$ is not $D_5$ nor $A_5$.
\label{prop:order5}
\end{prop}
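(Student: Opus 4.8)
The plan is to rule out order-$5$ elements in the residual image by combining the classification of subgroups of $A_5$ with explicit trace computations constrained by a Sturm bound. Since $2$ is inert in $\Om_F$, the residual representation $\bar\sigma_{\ff,2}$ takes values in $\SL_2(\F_4)\cong A_5$, and by Lemma~\ref{lemma:group} an order-$5$ element is detected precisely by having trace not in $\F_2$. So the first step is to translate the absence of order-$5$ elements into a statement about traces: I would show that $\trace\bar\sigma_{\ff,2}(\Frob_\id{p})\in\F_2$ for \emph{every} prime $\id{p}$ of good reduction. Because the trace at $\Frob_\id{p}$ is the reduction mod $2$ of the Hecke eigenvalue $\theta(T_\id{p})$, and these eigenvalues are controlled by the $q$-expansion coefficients of $\ff$, this is a congruence condition on the Fourier coefficients of $\ff$.

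The key reduction is that, by Lemma~\ref{lemma:group}, if the image contained an order-$5$ element it would contain a subgroup $C_5$, and by the subgroup list the only proper subgroups of $A_5$ containing $C_5$ are $C_5$ itself and $D_5$; in either case the image would be contained in a subgroup whose traces one can try to contradict. Rather than arguing group-theoretically, though, the cleaner route is: assume for contradiction that some $\Frob_\id{p}$ has order $5$, equivalently $\bar\theta(T_\id{p})\notin\F_2$. I would then consider the quadratic twist / the mod-$2$ reduction and use the fact that the set of primes where the trace lies in a given $\Gal$-stable subset is governed, via Chebotarev, by a fixed finite extension; the residual representation is determined on this extension by finitely many Frobenius traces. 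Here is where the Sturm bound enters: the Appendix provides an effective bound $B$ such that two Hilbert newforms of weight $(2,4)$ and level dividing $(30)$ agreeing mod $2$ at all primes of norm $\le B$ are congruent mod $2$; equivalently, the mod-$2$ reduction $\bar\ff$ is determined by its Hecke eigenvalues at primes up to norm $B$. So it suffices to check that $\bar\theta(T_\id{p})\in\F_2$ for all primes $\id{p}$ with $\normid{p}\le B$ — a finite computation — and then conclude by the Sturm bound that the same congruence holds at \emph{all} primes, hence the image contains no order-$5$ element.

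Concretely, the steps in order are: (1) invoke Lemma~\ref{lemma:group} to reduce ``no order-$5$ element'' to ``all Frobenius traces lie in $\F_2$''; (2) invoke the Sturm bound from Appendix~\ref{appendix:sturm} to reduce this to finitely many primes $\id{p}$ with $\normid{p}\le B$; (3) compute the Hecke eigenvalues $\theta(T_\id{p})$ of $\ff$ at those primes (using the data of \cite{consani} and the Pari/GP computations mentioned in the acknowledgements) and verify each reduces into $\F_2$ modulo the prime $2$ of $\Om_F$; (4) conclude. The main obstacle is step~(2)/(3): one needs the Sturm bound to be small enough that the finite verification in step~(3) is actually feasible, and one needs the arithmetic of reduction mod the inert prime $2$ (so that $\Om_F/2\cong\F_4$) to be handled carefully — in particular distinguishing ``trace in $\F_2$'' from ``trace in $\F_4\setminus\F_2$'' for each computed eigenvalue. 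A secondary subtlety is ensuring the Sturm bound is stated for the correct space (weight $(2,4)$, level $(30)$, with the relevant nebentypus) so that the congruence it delivers is exactly the one needed to kill order-$5$ elements across all Frobenii.
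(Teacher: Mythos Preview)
Your overall strategy is correct --- use Lemma~\ref{lemma:group} to translate the absence of order-$5$ elements into a congruence on Hecke eigenvalues, then invoke a Sturm bound to reduce to a finite check --- but there is a genuine gap in step~(2). A Sturm bound tells you when a Hilbert modular form is identically zero modulo a prime; it does not directly tell you when the coefficients of a single form land in the prime subfield $\F_2\subset\F_4$. You need to convert the condition ``$a_{\id{p}}\bmod 2\in\F_2$ for all $\id{p}$'' into the vanishing modulo $2$ of some auxiliary form. The paper does this by forming $\ff+\tau(\ff)$: for $x\in\F_4$ one has $x\in\F_4\setminus\F_2$ if and only if $x+\tau(x)=1$, so an order-$5$ element would force some Fourier coefficient of $\ff+\tau(\ff)$ to be odd, and \emph{now} the Sturm bound applies to that form. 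Your proposal never identifies this auxiliary object, and without it the Sturm bound cannot be invoked in the way you describe; ``checking $\bar\theta(T_{\id{p}})\in\F_2$ for $\normid{p}\le B$'' is not by itself a hypothesis any Sturm-type theorem accepts.

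There is a second missing ingredient. The Sturm bound in the Appendix is established for level $\Gamma_0(6\sqrt{5})$, not $\Gamma_0(30)$. The paper first twists by the quadratic character $\chi_{\sqrt{5}}$ (Lemma~\ref{lemma:twist}) to move $\ff+\tau(\ff)$ from level $30$ down to level $6\sqrt{5}$; this drops the required bound on $\trace(\xi)$ from roughly $850$ to $168$ and is what makes the verification feasible. Your closing worry that ``one needs the Sturm bound to be small enough'' is exactly this issue, and the quadratic twist is how it is resolved --- it is not an afterthought but an essential part of the argument.
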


\begin{proof}
Suppose the residual image of $\sigma_{\ff,2}$ is $D_5$ or $A_5$. The
groups $D_5$ and $A_5$ both have order $5$ elements, so by
Lemma~\ref{lemma:group} there exists a prime $\id{p}_0$ such that the
$\id{p}_0$-th Hecke eigenvalue $a_{\id{p}_0}$ of $\ff$ lies in $\F_4$,
but not in $\F_2$. The main idea is that in this case $\ff$ and its
Galois conjugate, $\tau(f)$, have different $q$-expansion modulo
$2$. We would like to consider the difference of these functions, and
prove using a Sturm bound result that this function is zero. The only
problem is that $\ff$ and $\tau(\ff)$ have different weights, so their
difference is not a Hilbert modular form. We overcome this problem by
multiplying the reduction of the cubic powers of these forms by 
appropiate Hasse invariants which give parallel weight $10$ forms
with $q$-expansions which are not in $\FF_2$.

Since the prime $2$ is inert in $\QQ(\sqrt{5})$, there exist forms
$h_1$ and $h_2$ of level $1$ and weight $(2,-1)$ and $(-1,2)$ in
$\overline{\FF}_2$ which are called the partial Hasse invariants (see
\cite{Go}, Definition 3.4, page 179) whose $q$-expansion at any cusp
is $1$. So we consider the
form $(\overline{\ff\otimes \chi_{\sqrt{5}}})^3h_1^2 +
(\overline{\tau(\ff\otimes \chi_{\sqrt{5}}})^3h_2^2$, which is a
parallel weight $10$ form for $\Gamma_0(6\sqrt{5})$ in
$\overline{\FF}_2$, whose $q$-expansion vanishes with order $3$ at all
cusps and is not the zero form (because by assumption at least one coefficient of
the $q$-expansion of $(\ff\otimes \chi_{\sqrt{5}})^3$ is not in
$\FF_2$).


We computed all Hecke eigenvalues with ideals generated by an element
of trace smaller than $181$ of $\ff$ and checked that they all lie in
$2\Om_F$ (a table for such eigenvalues can be found at
\cite{table}). In particular, all the Fourier coefficients of the
Hilbert modular form $\ff$ with trace at most $181$ lie in
$\FF_2$, so the same holds for $(\overline{\ff\otimes
  \chi_{\sqrt{5}}})^3h_2^2$ which implies that the form
$(\overline{\ff\otimes \chi_{\sqrt{5}}})^3h_2^2 +
(\overline{\tau(\ff\otimes \chi_{\sqrt{5}}})^3h_1^2$ has zero Fourier
expansion for all coefficients with trace at most $181$. The
Sturm bound (Theorem~\ref{thm:Sturm}) implies that it must be the zero
form, contradicting our assertion above and thus the original assumption.
\end{proof}

It remains to prove that the residual image at $2$ cannot be any of
the groups $\{C_3, C_5, S_3, A_4\}$. We recall some well known results from
Class Field Theory:

\begin{thm}
\label{thm:ab}
If $L/F$ be an abelian Galois extension unramified outside the set of
places $\set{\id{p}_i}_{i=1}^n$ then there exists a modulus $\id{m} =
\prod_{i=1}^n \id{p}_i^{e(\id{p}_i)}$ such that $\Gal(L/F)$
corresponds to a subgroup of the ray class group $Cl(\Om_F,\id{m})$.
\end{thm}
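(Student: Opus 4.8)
The plan is to deduce this directly from the main theorems of class field theory for number fields, so the "proof" is really a matter of assembling the correct statements rather than inventing anything new. First I would recall that for each modulus $\id{m}$ of $F$ there is a ray class group $Cl(\Om_F,\id{m})$ and a ray class field $H_\id{m}/F$, which is the maximal abelian extension of $F$ that is unramified outside the primes dividing $\id{m}$ and whose ramification at each $\id{p} \mid \id{m}$ is bounded by the exponent of $\id{p}$ in $\id{m}$ (and which, at the real places, is split or not according to whether the corresponding component of the modulus is trivial); the Artin reciprocity map induces a canonical isomorphism $\Gal(H_\id{m}/F) \simeq Cl(\Om_F,\id{m})$. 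So the whole argument reduces to producing, given an abelian extension $L/F$ unramified outside $\{\id{p}_i\}_{i=1}^n$, a single modulus $\id{m}$ supported on those primes with $L \subseteq H_\id{m}$.

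The key step is the existence of the \emph{conductor} $\id{f}(L/F)$ of the abelian extension $L/F$: by the conductor-discriminant theory (equivalently, by the conductor of the corresponding subgroup of id\`eles), there is a smallest modulus $\id{f}$ such that $L \subseteq H_{\id{f}}$, and a prime $\id{p}$ divides $\id{f}$ if and only if $\id{p}$ ramifies in $L/F$. Since by hypothesis the only primes ramifying in $L/F$ lie in $\{\id{p}_1,\dots,\id{p}_n\}$, the conductor has the shape $\id{f} = \prod_{i=1}^n \id{p}_i^{e(\id{p}_i)}$ for suitable exponents $e(\id{p}_i)\ge 0$ (possibly with an archimedean part, which one may either absorb or ignore as the statement concerns only the finite places $\id{p}_i$). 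Taking $\id{m} = \id{f}$, the inclusion $L \subseteq H_\id{m}$ together with the Galois correspondence gives a surjection $Cl(\Om_F,\id{m}) \simeq \Gal(H_\id{m}/F) \twoheadrightarrow \Gal(L/F)$, whose kernel is the subgroup fixing $L$; dually, $\Gal(L/F)$ is a quotient of the ray class group, and via Artin reciprocity it is identified with $Cl(\Om_F,\id{m})$ modulo a subgroup, i.e.\ with a subquotient — which is all that is needed (and one may rephrase "subgroup of $Cl(\Om_F,\id{m})$" as identifying $\Gal(L/F)$ with the quotient, or pass to the dual group, which is literally a subgroup).

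There is no real obstacle here; the only point requiring care is the bookkeeping at the archimedean places and the precise sense in which $\Gal(L/F)$ "corresponds to a subgroup" of the ray class group (it is naturally a quotient of $Cl(\Om_F,\id{m})$, and a subgroup of its Pontryagin dual, which is harmless since these are finite abelian groups). For the intended application one will in fact only use the weaker consequence that $\#\Gal(L/F)$ divides $\#Cl(\Om_F,\id{m})$ for \emph{some} modulus $\id{m}$ supported on the allowed primes, so one could even bypass the conductor and simply take $\id{m} = \prod \id{p}_i^{N}$ for $N$ large enough that $H_\id{m}$ contains $L$, whose existence is immediate from the fact that $\bigcup_N H_{\prod \id{p}_i^N}$ is the maximal abelian extension of $F$ unramified outside $\{\id{p}_i\}$. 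I would present the conductor version as it is cleaner, and cite a standard reference (e.g.\ Neukirch or Milne's CFT notes) for the reciprocity isomorphism and the conductor.
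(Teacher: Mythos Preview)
Your proposal is correct and is precisely the standard class field theory argument; you even correctly flag the minor abuse of language in the statement (that $\Gal(L/F)$ is naturally a \emph{quotient} of $Cl(\Om_F,\id{m})$ rather than a subgroup). The paper itself gives no proof at all for this theorem: it is introduced with the phrase ``We recall some well known results from Class Field Theory'' and stated without argument, so your sketch is more than what the paper provides. For the write-up you can safely condense it to a one-line citation of the existence theorem and Artin reciprocity (e.g.\ Neukirch or Cohen), as the paper does for the companion Proposition~\ref{cohen}.
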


A bound for $e(\id{p})$ is given by the following result.

\begin{prop}
Let $L/F$ be an abelian Galois extension of prime degree $p$.
Consider a modulus $\id{m} = \prod_{i=1}^n \id{p}_i^{e(\id{p}_i)}$
associated to the extension $L/F$ by Theorem \ref{thm:ab}.  If
$\id{p}$ ramifies in $L/F$, then
\[
\left \{\begin{array}{cl}
e(\id{p}) = 1 & \text{if } \id{p} \nmid p\\
2
\le e(\id{p}) \le \left \lfloor\frac{p e(\id{p}|p)}{p-1} \right \rfloor +1 & \text{if }\id{p} \mid
 p,
\end{array} \right.
\]
where $\id{p}$ is a prime above the rational prime $p$ and
$e(\id{p}|p)$ is the ramification index of $\id{p}$ in $F/\Q$.
\label{cohen}
\end{prop}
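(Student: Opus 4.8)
The plan is to pass to the local situation at $\id{p}$ and to the completions $L_{\id{P}}/F_{\id{p}}$, where $\id{P}$ is a prime of $L$ above $\id{p}$. Since $L/F$ is abelian of prime degree $p$, if $\id{p}$ ramifies then $L_{\id{P}}/F_{\id{p}}$ is a totally ramified abelian extension of degree $p$, and its conductor exponent $f(\id{P}/\id{p})$ is exactly the smallest $e$ such that the higher unit group $U_{F_{\id{p}}}^{(e-1)}$ (or rather $U^{(e)}$ in the standard numbering, depending on convention) lies in the norm group; by local class field theory this conductor exponent equals the exponent $e(\id{p})$ appearing in the global modulus attached to $L/F$ by Theorem~\ref{thm:ab}. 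So the whole statement is a computation of the conductor of a degree-$p$ ramified abelian extension of the local field $F_{\id{p}}$.

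First I would treat the tame case $\id{p}\nmid p$. A ramified abelian extension of degree $p$ of $F_{\id{p}}$ with $p\nmid \mathrm{char}$ is tamely ramified (wild ramification would force $p$-power ramification in residue characteristic dividing... no: tameness here just means the ramification index $p$ is prime to the residue characteristic, which holds precisely because $\id{p}\nmid p$), hence its conductor exponent is $1$. Equivalently, the ramification filtration on $\Gal(L_{\id{P}}/F_{\id{p}})$ has a single jump at $0$, so $e(\id{p})=1$. This gives the first line.

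Next, the wild case $\id{p}\mid p$. Now $L_{\id{P}}/F_{\id{p}}$ is a wildly and totally ramified $\ZZ/p$-extension, so its conductor exponent $m = e(\id{p})$ satisfies $m\ge 2$. For the upper bound I would use the standard estimate on where the ramification break can lie. Writing $e_0 = e(\id{p}\mid p)$ for the absolute ramification index of $F_{\id{p}}$, one has $\val_{\id{p}}(p) = e_0$, and the last ramification jump $t$ (in lower numbering) of a ramified $\ZZ/p$-extension satisfies $t \le \frac{p e_0}{p-1}$, with the extreme value attained only in the "almost maximally ramified" case; the conductor exponent is then $m = t+1$ (for a cyclic $p$-extension with a single break at $t$, the Artin conductor exponent is $t+1$). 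One derives the bound $t \le \frac{p e_0}{p-1}$ from Kummer theory after adjoining $\zeta_p$ — or, staying over $F_{\id{p}}$ itself, from the fact that $U^{(i)}$ becomes $p$-divisible for $i > \frac{p e_0}{p-1}$ (this is the classical range in which $x\mapsto x^p$ is an isomorphism on higher units), so the norm group must contain all such $U^{(i)}$ and hence the conductor cannot exceed $\lfloor \frac{p e_0}{p-1}\rfloor + 1$. Combining, $2 \le e(\id{p}) \le \lfloor \frac{p\, e(\id{p}\mid p)}{p-1}\rfloor + 1$, as claimed.

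The main obstacle is getting the upper bound with the exact floor, i.e.\ being careful about the numbering conventions (lower vs.\ upper ramification numbering, and the off-by-one between "last jump $t$" and "conductor $t+1$") and about the boundary case when $\frac{p e_0}{p-1}$ is an integer — one must check whether a break can actually occur there, which is where the behaviour of the $p$-th power map on $U^{(i)}$ for $i$ near $\frac{p e_0}{p-1}$ has to be examined precisely (the relevant fact being that $x \mapsto x^p$ maps $U^{(i)}$ into $U^{(i+e_0)}$ and isomorphically onto $U^{(pi)}$ when $i < \frac{e_0}{p-1}$, onto $U^{(i+e_0)}$ when $i > \frac{e_0}{p-1}$). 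Once that bookkeeping is pinned down, everything else is a direct application of local class field theory and the global-to-local compatibility of conductors already encoded in Theorem~\ref{thm:ab}. I would also remark that this is essentially Cohen's bound from the literature on computing ray class fields, so one may alternatively just cite it.
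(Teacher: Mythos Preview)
Your sketch is correct in spirit and in fact goes well beyond what the paper does: the paper's ``proof'' is simply a citation to Cohen, \emph{Advanced Topics in Computational Number Theory}, Propositions~3.3.21 and~3.3.22. So there is nothing to compare at the level of argument --- you have supplied the local class field theory that underlies Cohen's statement, and you even note at the end that one may alternatively just cite it, which is precisely what the paper opts to do.

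One small terminological wrinkle: in the wild case you write that ``$U^{(i)}$ becomes $p$-divisible for $i > \frac{p e_0}{p-1}$''. What you actually use (and what is true) is that $U^{(i)} \subset (F_{\id{p}}^{\times})^{p}$ for such $i$, which follows from the isomorphism $x \mapsto x^{p}\colon U^{(j)} \xrightarrow{\sim} U^{(j+e_0)}$ valid for $j > e_0/(p-1)$. This is enough, since $(F_{\id{p}}^{\times})^{p}$ lies in every norm group of index $p$, forcing the conductor exponent to be at most $\lfloor p e_0/(p-1)\rfloor + 1$. The phrase ``$p$-divisible'' is not quite the right description of this containment, but the mathematics you intend is sound, and you already flag the off-by-one bookkeeping as the point requiring care.
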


\begin{proof} See \cite{cohen} Proposition $3.3.21$ and Proposition $3.3.22$.
\end{proof}

Using these two results, we can compute for each possible Galois group all Galois
extensions of $F$ unramified outside $\{2,3,5\}$.
In each extension, we find a prime
$\id{p}$  where the Frobenius has non-zero trace (in
$\FF_4$). If $a_{\id{p}} \equiv 0 \pmod 2$, for all such primes we are
done:

\begin{prop}
\label{prop:mod}
The residual representation $\bar{\sigma}_{\ff,2}$ has
  image a $2$-group.
\end{prop}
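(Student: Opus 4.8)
The plan is to rule out, one by one, the three remaining candidate images $C_3$, $S_3$, and $A_4$ for $\bar\sigma_{\ff,2}$, and then conclude by elimination. Since the image is a subgroup of $\SL_2(\F_4)\cong A_5$, the previous lemmas already cut the list down: Proposition~\ref{prop:order5} kills $C_5$, $D_5$, and the full group $A_5$ (any subgroup containing an order-$5$ element), so the surviving non-$2$-group possibilities are exactly $C_3$, $S_3$, $A_4$. Each of these has a quotient or is itself of order divisible by $3$, but more usefully each contains a normal subgroup with abelian quotient of order $2$ or $3$; I will exploit the $C_3$-quotient in each case. Concretely: if the image $H$ is $C_3$, $S_3$, or $A_4$, then $\bar\sigma_{\ff,2}$ has associated to it (via $H \twoheadrightarrow C_3$ in the $C_3$ and $A_4$ cases, and $H=S_3$ acting through its index-$2$ abelian quotient is not quite right---rather for $S_3$ one uses that the order-$3$ elements generate the unique index-$2$ subgroup $C_3$, so the fixed field of the kernel of $\bar\sigma$ contains a degree-$3$ cyclic subextension in all three cases after passing to an at most quadratic extension of $F$). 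In every case one obtains an abelian (indeed cyclic of degree $3$) Galois extension $L/F'$ with $[F':F]\le 2$, unramified outside $\{2,3,5\}$.

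The next step is to enumerate all such extensions explicitly. By Theorem~\ref{thm:ab} each abelian extension unramified outside $S=\{2,3,5\}$ corresponds to a subgroup of a ray class group $Cl(\Om_F,\id m)$, and Proposition~\ref{cohen} bounds the exponents $e(\id p)$ in the conductor $\id m$: for a degree-$3$ extension, primes not above $3$ contribute exponent $1$, and the primes above $3$ (which is inert or splits in $F=\Q(\sqrt5)$---here $3$ is inert with $e(\id p|3)=1$, so $2\le e(\id p)\le \lfloor 3/2\rfloor+1 = 2$) contribute at most $2$. This gives a finite, small list of admissible moduli $\id m$, hence a finite computable collection of ray class groups, and one extracts from their $3$-torsion the complete list of candidate cyclic cubic extensions of $F$ (and similarly over the relevant quadratic extensions $F'$ needed for $S_3$ and $A_4$). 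I would run this computation (e.g.\ in Pari/GP) and record each field.

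For each candidate extension $L/F'$ I then search for a prime $\id p$ of $F$ of good reduction (prime to $30$) whose Frobenius in $\Gal(L/F)$ is a nontrivial element of the putative image---equivalently, by Lemma~\ref{lemma:group}, a prime at which the image element has order $3$ or $2$, i.e.\ $a_{\id p}\not\equiv 0$ and $a_{\id p}\in\F_2$ (trace $1$) for an order-$3$ element, or trace $0$ for an order-$2$ element in the $S_3$/$A_4$ cases. The point is that if the image really were $C_3$, $S_3$, or $A_4$, such primes must exist with $a_{\id p}\not\equiv 0\pmod 2$; but from the computed table of Hecke eigenvalues (the same table used in Proposition~\ref{prop:order5}, available at \cite{table}) we already know $a_{\id p}\in 2\Om_F$ for \emph{all} primes $\id p$ with $\trace$ of a generator below $168$. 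Finding one such prime of small norm in each candidate field that forces $a_{\id p}$ odd then contradicts the table, eliminating that field. Having eliminated every candidate, the image $\bar\sigma_{\ff,2}$ contains no element of order $3$ or $5$, so by Lemma~\ref{lemma:group} every element has order $1$ or $2$; combined with the subgroup list of Lemma~2.1 (only $\{1\}$, $C_2$, $C_2\times C_2$ qualify), the image is a $2$-group, proving Proposition~\ref{prop:mod}.

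The main obstacle I anticipate is not conceptual but bookkeeping: ensuring the enumeration of abelian extensions is genuinely exhaustive (getting the conductor bounds from Proposition~\ref{cohen} exactly right at the prime $\sqrt5$ and at the inert prime $3$, and not overlooking the quadratic subextensions $F'$ of $F$ that are needed to handle the non-abelian cases $S_3$ and $A_4$), and then, for each of the finitely many candidate fields, actually exhibiting a split-Frobenius prime $\id p$ of small enough norm that its eigenvalue $a_{\id p}$ is recorded in the available table. If some candidate field's "detecting prime" had norm too large to appear in the trace-$\le 168$ table, one would need to compute a few more eigenvalues by hand; but in practice the relevant primes have small norm and this does not arise.
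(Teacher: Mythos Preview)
Your proposal is correct and follows essentially the same strategy as the paper: eliminate $C_3$, $S_3$, $A_4$ by enumerating the relevant abelian (cyclic cubic) extensions of $F$ or of a quadratic extension $F'/F$ unramified outside $\{2,3,5\}$ via ray class groups and conductor bounds, then for each candidate find a small prime whose Frobenius has order $3$ and check that the corresponding Hecke eigenvalue is even, contradicting Lemma~\ref{lemma:group}. The paper organises this slightly differently---treating $C_3$ and $A_4$ together over $F$ itself (using that $A_4$ has no order-$6$ elements, so a Frobenius nontrivial in the $C_3$-quotient must have order exactly $3$), and then handling $S_3$ by listing all $31$ quadratic extensions of $F$ and the cubic characters above each---but the content is the same. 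Two small slips in your write-up worth fixing: you want primes with \emph{nontrivial} (not ``split'') Frobenius in the cubic extension; and the trace-$\le 168$ table is invoked in the paper for the Sturm-bound argument in Proposition~\ref{prop:order5}, whereas for this proposition only the eigenvalues at the explicit small primes (all of norm at most $61^2$) are needed.
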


\begin{proof}

\noindent Consider the different cases:

\medskip

$\bullet$ The group $A_4$ has $C_2 \times C_2$ as a normal subgroup
(with generators $(12)(34)$ and $(13)(24)$). The quotient by this
subgroup is a cyclic group of order $3$, so the extension contains a
cubic Galois subfield. Since there are no elements of order $6$ in
$A_4$, a non trivial element in this Galois group will have odd
trace. Thus the case $A_4$ and $C_3$ can be discarded at the same
time.

In order to do so, we can take $\id{m} = 2 \cdot 3^2 \cdot \sqrt{5}$
as the maximal modulus by Proposition \ref{prop:mod}. The ray class
group $Cl(\Om_F,\id{m})$ is isomorphic to $C_{12} \times C_6$ so there
are $4$ cubic extensions ramified at these primes. We consider the
characters as additive characters (by taking logarithms), and denote
by $\psi_1$ and $\psi_2$ two characters that generate the group of
characters of order $3$ (we take the fourth and the second power of
the characters in the previous basis). Instead of computing a prime
ideal where each character is non-zero, we compute two prime ideals
$\id{p}_1$ and $\id{p}_2$ such that
\[
\<(\psi_1(\id{p}_1),\psi_2(\id{p}_1)),(\psi_1(\id{p}_2),\psi_2(\id{p}_2))>
= \ZZ/3\ZZ \times \ZZ/3\ZZ.
\]

Then Proposition $5.4$ of \cite{dieulefait} implies that any cubic
character is non-trivial in one of these two ideals. The two ideals
above the prime $11$ have values $(1,0)$ and $(1,2)$, which is a basis
for $\FF_3 \times \FF_3$. Since the modular form has even traces (in
$\FF_2$) at both primes (see Table of \cite{consani}), we conclude
that the residual representation cannot have image isomorphic to $C_3$
nor $A_4$.

\medskip

$\bullet$ To discard the $C_5$ case, we take $\id{m} = 2\cdot 3\cdot
(\sqrt{5})^3$. The ray class group for this module is cyclic of order
$10$. The generator at primes above $11$ has value $9$, in particular
the order $5$ characters do not vanish at this primes. But the trace
of Frobenius lies in $\FF_2$ at these primes (as mentioned in the
previous case), so the image cannot be cyclic of order $5$.

\medskip

$\bullet$ To discard the $S_3$ case, we start by computing all the
quadratic extensions of $F$ ramified outside the set of primes
$\{2,3,5\}$. The modulus in this case is $\id{m} = 2^3 \cdot 3 \cdot
\sqrt{5}$. The ray class group is isomorphic to $C_4 \times C_4 \times
C_2\times C_2 \times C_2$ so there are $31$ such extensions. In
Table~\ref{table:extensions} we put all the information of these
extensions; the first column has an equation for each such extension,
the second column its discriminant over $\Q$, the third column the
modulus considered (where by $\id{p}_2$ (respectively $\id{p}_5$) we
denote the unique prime ideal in the extension above the rational
prime $2$ (respectively $5$)), the fourth column the ray class group
and the last column the rational primes whose prime divisors in $F$ generate
the $\FF_3$ vector space of cubic characters.

\begin{table}
\scalebox{0.72}{
\begin{tabular}{|r|r|r|r|r|}
\hline
Equation over $F$ & Disc. over $\Q$ & Modulus & Ray Class Group & Rational Primes\\
\hline
$x^2 - 6\sqrt{5}$ & $-2^6 \cdot 3^2\cdot 5^3$ &  $9\cdot \id{p}_2\cdot \id{p}_5$ & $ C_{72}\times
  C_{6}\times C_{3}\times C_{3}$ &  $\{7,11,13\}$\\
$x^2 + 6\sqrt{5}$ & $- 2^6\cdot 3^2\cdot 5^3$ & $9\cdot \id{p}_2\cdot \id{p}_5$ & $ C_{72}\times
  C_{6}\times C_{3}\times C_{3}$ &  $\{7,11,13\}$\\
$x^2+\sqrt{5}+1$ & $-2^6\cdot 5^2$ & $9\cdot \id{p}_2\cdot\sqrt{5}$ & $ C_{12}\times C_{6}\times C_{3}$ &  $\{7,11,13,19\}$\\
$x^2-\sqrt{5}-1$ & $- 2^6\cdot 5^2$ & $9\cdot \id{p}_2\cdot \sqrt{5}$ & $ C_{12}\times C_{6}\times C_{3}$ &  $\{7,11,13,19\}$\\
$x^2-3\sqrt{5}-3$ & $-2^6\cdot 3^2\cdot 5^2$ & $9\cdot \id{p}_2\cdot \sqrt{5}$ & $ C_{36}\times C_{6}\times C_{3}\times C_{3}$ &  $\{7,11,13,17\}$\\
$x^2 + 3\sqrt{5} + 3$ & $- 2^6\cdot 3^2\cdot 5^2$ & $9\cdot \id{p}_2\cdot \sqrt{5}$ & $ C_{36}\times C_{6}\times C_{3}\times C_{3}$ &  $\{7,11,13,17\}$\\
$x^2+2\sqrt{5}$ & $- 2^6\cdot 5^3$ & $9\cdot \id{p}_2\cdot \id{p}_5$ & $ C_{24}\times C_{6}\times C_{3}$ & $\{7,11,13\}$\\
$x^2 - 2\sqrt{5}$ & $- 2^6\cdot 5^3$ & $9\cdot \id{p}_2\cdot \id{p}_5$ & $ C_{24}\times C_{6}\times C_{3}$ &  $\{7,11,13\}$\\
$x^2 + \frac{3}{2}\sqrt{5} + \frac{3}{2}$ & $- 2^4\cdot 3^2\cdot 5^2$ & $9\cdot \id{p}_2\cdot \sqrt{5}$ & $C_{36}\times C_{6}\times C_{3}$ &  $\{7,11,13\}$\\
$x^2 -\frac{3}{2}\sqrt{5} - \frac{3}{2}$ & $- 2^4\cdot 3^2\cdot 5^2$ & $9\cdot \id{p}_2\cdot \sqrt{5}$ & $C_{36}\times C_{6}\times C_{3}$ &  $\{7,11,13\}$\\
$x^2 - \sqrt{5}$ & $- 2^4\cdot 5^3$ & $9\cdot \id{p}_2\cdot \id{p}_5$ & $C_{24}\times C_{6}\times C_{3}$ &  $\{7,11,13\}$\\
$x^2 + \sqrt{5}$ & $- 2^4\cdot 5^3$ & $9\cdot \id{p}_2\cdot \id{p}_5$ & $C_{24}\times C_{6}\times C_{3}$ &  $\{7,11,13\}$\\
$x^2 + 3\sqrt{5}$ & $- 2^4\cdot 3^2\cdot 5^3$ & $9\cdot \id{p}_2\cdot \id{p}_5$ &$C_{24}\times C_{6}\times C_{3}\times C_{3}$ &  $\{7,11,13\}$\\
$x^2 - 3\sqrt{5}$ & $- 2^4\cdot 3^2\cdot 5^3$ & $9\cdot \id{p}_2\cdot \id{p}_5$ &$C_{24}\times C_{6}\times C_{3}\times C_{3}$ &  $\{7,11,13\}$\\
$x^2 -\frac{1}{2}\sqrt{5} - \frac{1}{2}$ & $- 2^4\cdot 5^2$ & $9\cdot \id{p}_2\cdot \sqrt{5}$ & $C_{12}\times C_{6}\times C_{3}\times C_{3}$ &  $\{7,11,13,17\}$\\
$x^2 + \frac{1}{2}\sqrt{5} + \frac{1}{2}$ & $- 2^4\cdot 5^2$ & $9\cdot \id{p}_2\cdot \sqrt{5}$ &$C_{12}\times C_{6}\times C_{3}\times C_{3}$ &  $\{7,11,13,17\}$\\
$x^2+\sqrt{5}+5$ & $2^6\cdot 5^3$ & $9\cdot \id{p}_2\cdot \id{p}_5$ &$C_{60}\times C_{6}\times C_{3}\times C_{3}$ &  $\{7,11,13,23\}$\\
$x^2-6$ & $2^6\cdot 3^2\cdot 5^2$ & $9\cdot \id{p}_2\cdot \sqrt{5}$ & $C_{12}\times C_{6}\times C_{6}$ &  $\{7,11,13,23\}$\\
$x^2+2$ & $2^6\cdot 5^2$ & $9\cdot \id{p}_2\cdot \sqrt{5}$ &$ C_{24}\times C_{12}\times C_{3}\times C_{3}\times C_{3}$ &  $\{7,11,13,23\}$\\
$x^2-3\sqrt{5}-15$ & $2^6\cdot 3^2\cdot 5^3$ & $9\cdot \id{p}_2\cdot \id{p}_5$ & $C_{12}\times C_{6}$ &  $\{7,11,13\}$\\
$x^2+6$ & $2^6\cdot 3^2\cdot 5^2$ & $9\cdot \id{p}_2\cdot \sqrt{5}$ &$C_{72}\times C_{12}\times C_{3}\times C_{3}\times C_{3}$ &  $\{7,11,13,17\}$\\
$x^2 -\sqrt{5} - 5$ & $2^6\cdot 5^3$ & $9\cdot \id{p}_2\cdot \id{p}_5$ & $C_{36}\times C_{18}\times C_{3}\times C_{3}$ &  $\{7,11,13\}$\\
$x^2 + 3\sqrt{5} + 15$ & $2^6\cdot 3^2\cdot 5^3$ & $9\cdot \id{p}_2\cdot \id{p}_5$ & $C_{12}\times C_{6}\times C_{6}$ &  $\{7,11,13,61\}$\\
$x^2 - 2$ & $2^6\cdot 5^2$ & $9\cdot \id{p}_2\cdot \sqrt{5}$ & $C_{36}\times C_{18}\times C_{3}\times C_{3}$ &  $\{7,11,13\}$\\
$x^2 + \frac{3}{2}\sqrt{5} + \frac{15}{2}$ & $2^4\cdot 3^2\cdot 5^3$ & $9\cdot \id{p}_2\cdot \id{p}_5$ & $C_{36}\times C_{18} \times C_3 \times C_3$ &  $\{7,11,13\}$\\
$x^2 + 3$ & $3^2\cdot 5^2$ & $18\cdot \sqrt{5}$ & $C_{36}\times C_{3}\times C_{3}\times C_{3}\times C_{3}\times C_{3}$ &  $\{7,11,13,17,19,23\}$\\
$x^2 + 1$ & $2^4\cdot 5^2$ & $9\cdot \id{p}_2\cdot \sqrt{5}$ & $C_{24}\times C_{12}\times C_{3}\times C_{3}$ &  $\{7,11,13,17\}$\\
$x^2 + \frac{1}{2}\sqrt{5} + \frac{5}{2}$ & $5^3$ & $18\cdot \id{p}_5$ & $C_{60}\times C_{3}\times C_{3}\times C_{3}$ &  $\{7,11,13\}$\\
$x^2 -\frac{1}{2}\sqrt{5} - \frac{5}{2}$ & $2^4\cdot 5^3$ & $9\cdot \id{p}_2\cdot \id{p}_5$ & $C_{12}\times C_{6}$ &  $\{7,11,13\}$\\
$x^2 - 3$ & $2^4\cdot 3^2\cdot 5^2$ & $9\cdot \id{p}_2\cdot \sqrt{5}$ & $C_{12}\times C_{6}\times C_{6}\times C_{3}$ &  $\{7,11,13\}$\\
$x^2 -\frac{3}{2}\sqrt{5} - \frac{15}{2}$ & $3^2\cdot 5^3$ & $18\cdot \id{p}_5$ &$C_{6}\times C_{6}$ &  $\{7,11,13\}$\\
\hline
\end{tabular}
}
\caption{Quadratic extensions of $F$ unramified outside $\{2,3,5\}$ \label{table:extensions}}
\end{table}

Note that the first $16$ extensions are not Galois over $\Q$ and are
listed so that they are isomorphic in pairs. The last $15$ are indeed
Galois over $\Q$. This implies that we could consider one element of
each pair for the first fields. Note that the primes at conjugate
fields are the same, since the Frobenius at a prime in $F$ of a field
is the same as the Frobenius at the conjugate of the prime in the
conjugate extension.

The traces of Frobenius are even in all such primes (see Table of
\cite{consani}), so we conclude that the image of the residual
representation $\bar{\sigma}_{\ff,2}$ cannot be isomorphic to $S_3$
either.
\end{proof}

\section{Computing the residual image of the Galois representation $\sigma_2$}
\label{s:geom}

In this section we consider the Consani-Scholten
quintic $\tilde X$ from \cite{consani}.
Our goal is to prove that the $\ell$-adic Galois
representations $\rho$ of $\HH^3(\tilde X_{\bar\Q},\Q_\ell)$ have $4$-divisible trace.
This will
be used in \ref{ss:sigma} to deduce that modulo $2$ the restricted $2$-adic two-dimensional Galois
representations have image in $\SL_2(\F_2)$,
and in fact even trace, so that we can apply
an adaptation of the  Faltings-Serre-Livn\'e method
in order to prove Theorem \ref{main-theorem}.

\subsection{Setup}

Consider the Chebyshev polynomial
\[
P(y, z) = (y^5+z^5)-5yz(y^2+z^2)+5yz(y+z)+5(y^2+z^2)-5(y+z).
\]
Then we define an affine variety $X$ in $\mathbb{A}^4$ by
\[
X:\;\;\; P(x_1, x_2) = P(x_4,x_5).
\]
Let $\bar X\subset\PP^4$ denote the projective closure of $X$. Then $\bar X$ has 120 ordinary double points. Let $\tilde X$ denote a desingularisation obtained by blowing up $\bar X$ at the singularities.

\begin{remark}
We might also consider a small resolution $\hat X$, as many of the nodes lie on products of lines. Then we would have to check that $\hat X$ is projective and can be defined over $\Q$. This desingularisation would have the advantage of producing an honest Calabi-Yau threefold, but it does not affect the question of Hilbert modularity.
\end{remark}

Consani and Scholten compute the Hodge diamond of $\tilde X$ as follows:

\[
\begin{matrix}
&&& 1 &&&\\
&& 0 && 0 &&\\
& 0 && 141 && 0 &\\
1 && 1 && 1 && 1\\
& 0 && 141 && 0 &\\
&& 0 && 0 &&\\
&&& 1 &&&
\end{matrix}
\]

Hence the \'etale cohomology groups $\HH^3(\tilde X_{\bar\Q}, {\Q}_\ell)$ ($\ell$ prime) give rise to a compatible system of four-dimensional Galois representations $\{\rho_\ell\}$. Since $\tilde X$ has good reduction outside $\{2, 3, 5\}$, $\rho_\ell$ is unramified outside $\{2, 3, 5, \ell\}$.

Let $F=\Q(\sqrt{5})$ and fix some prime $\ell\in\NN$ and a prime $\lambda$ of $F$ above $\ell$. Then Consani and Scholten prove for the $\ell$- resp.~$\lambda$-adic representations:

\begin{thm}[Consani-Scholten {\cite{consani}}]
\label{Thm:CS}
The restriction $\rho|_{\Gal(\bar\Q/F)}$ is reducible as a representation into $\GL_4(F_\lambda)$: There is a Galois representation
\[
\sigma: \Gal(\bar\Q/F) \to \GL_2(F_\lambda)
\]
such that $\rho=\Ind_F^\Q \sigma$.
\end{thm}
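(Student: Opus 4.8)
The plan is to exploit the shape of the defining equation. The identity $P(x_1,x_2)=P(x_4,x_5)$ realises $X$ as the fibre product over $\AA^1$ of two copies of the Chebyshev map $P\colon\AA^2\to\AA^1$: the common value $t=P(x_1,x_2)=P(x_4,x_5)$ defines a rational map from $\tilde X$ to $\PP^1$ whose generic fibre is $C_t\times C_t$, where $C_t=P^{-1}(t)$ is the affine quintic level curve of $P$, with smooth projective model $\bar C_t$. Thus $\tilde X$ is birational to a resolution of $\mathcal C\times_{\PP^1}\mathcal C$, where $\mathcal C\to\PP^1$ is the family of the $\bar C_t$, and it carries the involution $\iota$ interchanging the two factors. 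First I would analyse $\HH^3(\tilde X_{\bar\Q},\Q_\ell)$ through the Leray spectral sequence of this pencil together with relative K\"unneth over the base. Using that the family $\mathcal C/\PP^1$ has large monodromy (so that the Leray terms $\HH^0(\PP^1,R^3)$ and $\HH^2(\PP^1,R^1)$ vanish), that the Hodge numbers in degree $3$ are $h^{3,0}=h^{2,1}=1$, and that the resolution of the $120$ nodes and the components of the reducible fibres affect only $\HH^2$ and $\HH^4$, one should be able to identify the four-dimensional Galois module $\HH^3$ with the relevant summand of $\HH^1\bigl(\PP^1,j_*(R^1(\mathcal C/\PP^1)\otimes R^1(\mathcal C/\PP^1))\bigr)$; the involution $\iota$ and a Tate-twist argument further cut this down. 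This step is laborious but essentially formal once the reducible fibres and the resolution of the nodes have been made explicit.

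The heart of the matter is the origin of $\sqrt5$. The level curves of the degree-$5$ Chebyshev polynomial carry extra symmetry reflecting its ``pentagonal''/golden-ratio structure: I expect that $\mathrm{Jac}(\bar C_t)$ — or the natural quotient of it entering the summand isolated above — admits real multiplication by $F=\Q(\sqrt5)$, realised by an explicit self-correspondence of $\bar C_t$ which is defined over $F$ but \emph{not} over $\Q$ and on which the generator $\tau$ of $\Gal(F/\Q)$ acts by $\sqrt5\mapsto-\sqrt5$. Producing this correspondence, checking that it squares to multiplication by $5$, and determining precisely its field of definition together with the $\Gal(F/\Q)$-action, is the one genuinely new geometric input and the main obstacle; everything else is bookkeeping or representation theory. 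Propagated through the fibre product and the identification of the first step, it yields an endomorphism $T$ of $\HH^3(\tilde X_{\bar\Q},\Q_\ell)\otimes F_\lambda$ which commutes with the action of $\Gal(\bar\Q/F)$, satisfies $T^2=5$, and is carried to $-T$ by any $g\in\Gal(\bar\Q/\Q)\setminus\Gal(\bar\Q/F)$ (because $g$ conjugates the underlying correspondence to its $\tau$-conjugate, and the action of a Galois-conjugate cycle on cohomology is the conjugate operator).

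With $T$ at hand the conclusion is formal. Since $\sqrt5\in F_\lambda$, decompose $\rho|_{\Gal(\bar\Q/F)}\otimes F_\lambda=V_+\oplus V_-$ into the $(\pm\sqrt5)$-eigenspaces of $T$. Each $V_{\pm}$ is $\Gal(\bar\Q/F)$-stable because $T$ commutes with that action, and each is two-dimensional because $\rho$ is four-dimensional and $T\neq\pm\sqrt5$. Put $\sigma=\rho|_{V_+}$ and $\sigma'=\rho|_{V_-}$. For $g\in\Gal(\bar\Q/\Q)\setminus\Gal(\bar\Q/F)$ the relation $gTg^{-1}=-T$ forces $g\,V_+=V_-$, and the action of $\Gal(\bar\Q/F)$ on $g\,V_+$ is the $g$-conjugate of its action on $V_+$; hence $\sigma'\simeq\sigma^\tau$ and $\rho|_{\Gal(\bar\Q/F)}\simeq\sigma\oplus\sigma^\tau$. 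Since moreover $\Gal(\bar\Q/\Q)=\Gal(\bar\Q/F)\sqcup\Gal(\bar\Q/F)\,g$ and $\rho=V_+\oplus g\,V_+$, this is precisely the assertion $\rho\simeq\Ind_F^\Q\sigma$. (One can also record that $\sigma\not\simeq\sigma^\tau$ — equivalently $T$ is not defined over $\Q$, equivalently $\rho$ is irreducible over $\Q$ — by comparing $\trace\sigma(\Frob_{\id p})$ with $\trace\sigma(\Frob_{\tau\id p})$ for a single prime $\id p$ via the $\F_q$-point counts of $\tilde X$; this is not needed for the theorem itself but is what makes the example noteworthy.)
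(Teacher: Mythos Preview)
The paper does not supply a proof of this theorem: it is stated with the attribution ``Consani--Scholten'' and the introduction refers the reader to \cite{consani}, Theorem~3.2, for the argument. So there is no proof in the present paper against which to compare your attempt.

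As for your outline on its own terms: the general architecture---exploit the fibre-product structure over $\PP^1$, identify the relevant piece of $\HH^3$ via Leray/K\"unneth, and then look for a correspondence over $F$ inducing an operator $T$ with $T^2=5$ that is swapped with $-T$ by $\Gal(F/\Q)$---is indeed in the spirit of what Consani and Scholten do. The formal endgame (eigenspace decomposition for $T$, the swapping under outer conjugation, and the identification with an induced representation) is fine.

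However, what you have written is a programme rather than a proof. You say yourself that producing the $\sqrt5$-correspondence, verifying $T^2=5$, and pinning down its field of definition together with the $\Gal(F/\Q)$-action is ``the one genuinely new geometric input and the main obstacle''---and you do not carry it out. The phrases ``I expect that $\mathrm{Jac}(\bar C_t)$ \ldots\ admits real multiplication by $F$'' and ``one should be able to identify'' are not arguments. Likewise, the Leray analysis is asserted to go through by ``large monodromy'' without any verification, and the passage from the singular fibre-product to the smooth model $\tilde X$ is dismissed as bookkeeping; in practice this is where much of the work in \cite{consani} lies. Until you actually construct the correspondence (Consani--Scholten write it down explicitly on $\tilde X$ rather than on the fibre Jacobians) and check its properties, the proposal has a genuine gap at exactly the point that carries all the content.
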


The theorem implies in particular that internal and external conjugation have the same effect on $\sigma$.
Here we want to prove the following property:
\begin{prop}\label{Prop:4}
The Galois representation $\rho$ has $4$-divisible trace,
and so has any restriction to a finite extension of $\Q$.
That is, 
$\rho(\Frob_q)\equiv 0\mod 4$
for any odd prime power $q$.
\end{prop}

In fact, for $q\equiv 2,3\mod 5$, Consani-Scholten proved that
$\rho(\Frob_q)$ has zero trace. Hence we would only have to consider
the case $q\equiv 1,4 \mod 5$, although we will treat the problem in
full generality.

As a corollary, we will deduce that $\sigma$ is even in \ref{ss:sigma}
as required for the proof of Theorem \ref{main-theorem}.

\subsection{Lefschetz fixed point formula}

Choose a prime $p\neq\ell$ of good reduction for $\tilde X$ and let
$q=p^r$. Consider the geometric Frobenius endomorphism Frob$_q$ on
$\tilde X/\F_p$, raising coordinates to their $q$-th powers. Then the
Lefschetz fixed point formula tells us that
\begin{eqnarray*}
\# \tilde X(\F_q) & = & \sum_{i=0}^6 (-1)^i \, \text{trace Frob}_q^*(\HH^i(\tilde X_{\bar\Q}, \Q_\ell)).
\end{eqnarray*}
In our situation, this simplifies as follows: $h^1=h^5=0$; $\HH^2(\tilde X)$ and $\HH^4(\tilde X)$ are algebraic by virtue of the exponential sequence and Poincar\'e duality. Moreover Frob$_q^*$ factors through a permutation on $\HH^2(\tilde X)$, i.e.~all eigenvalues have the shape $\zeta q$ where $\zeta$ is some root of unity. Denote the sum of these roots of unity by $h_q$
(which is an integer in $\Z$ by the Weil conjectures). Finally geometric and algebraic Frobenius are compatible through $\rho$. Hence
\begin{eqnarray}\label{eq:trace}
t_q = \text{trace}\; \rho(\Frob_q) = 1 + h_q \,q (1+q) + q^3 - \# \tilde X(\F_q).
\end{eqnarray}

Prop.~\ref{Prop:4} claims that the left hand side is divisible by $4$. If $q\equiv-1\mod 4$, this is a consequence of the following
\begin{lemma}\label{Lem:number}
For any good prime $p$ and  $q=p^r$, $\# \tilde X(\F_q)\equiv 0\mod 4$.
\end{lemma}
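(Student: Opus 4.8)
The plan is to count points on $\tilde X$ over $\F_q$ by first counting points on the affine variety $X$ and on the singular projective model $\bar X$, then accounting for the exceptional divisors of the resolution. Since $\tilde X$ is obtained from $\bar X$ by blowing up $120$ ordinary double points, over $\F_q$ the point count changes by replacing each node defined over $\F_q$ by a copy of a smooth quadric surface (a $\PP^1\times\PP^1$ or its non-split form), while nodes not defined over $\F_q$ contribute in Galois orbits. A quadric surface over $\F_q$ has $q^2+q^2\cdot\text{(something)}$ — more precisely $(q+1)^2$ or $q^2+1$ points, and the node it replaces contributes $1$; both $(q+1)^2-1 = q^2+2q$ and $(q^2+1)-1=q^2$ are divisible by $4$ as soon as $q$ is even, and for $q$ odd one checks $q^2+2q \equiv 1+2q$ and $q^2\equiv 1 \pmod 4$, so this contribution is \emph{not} automatically $0\bmod 4$ and the symmetry of the configuration will have to be used. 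So the first reduction I would make is: it suffices to prove $\#\bar X(\F_q)\equiv \#\{\text{nodes defined over }\F_q\}\cdot c(q)\pmod 4$ for the appropriate constant, or more cleanly to compute everything in terms of $\#X(\F_q)$ directly.

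The cleaner route, which I expect the authors take, is to exploit the product structure $X:\ P(x_1,x_2)=P(x_4,x_5)$. Let $N_t$ denote the number of solutions $(y,z)\in\F_q^2$ of $P(y,z)=t$ for $t\in\F_q$ (or over $\PP^1$ as appropriate for the projective closure). Then the affine count is $\#X(\F_q)=\sum_{t\in\F_q} N_t^2$. The key arithmetic input is a $4$-divisibility statement for $\sum_t N_t^2$, which should follow because the Chebyshev polynomial $P$ has extra symmetry: there is an action (swapping $y\leftrightarrow z$, and the dihedral/Chebyshev symmetries coming from $P$ being essentially $2\cos(5\theta)$ after a change of variables) that pairs up solutions, forcing each $N_t$ to lie in a prescribed residue class mod $2$, hence $N_t^2$ in a prescribed class mod $4$, and the sum over the $q$ values of $t$ to be divisible by $4$. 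One then has to pass from the affine $X$ to the projective $\bar X$ (adding the hyperplane-at-infinity contribution, which is a point count on a lower-dimensional variety and is handled the same way, using $q\equiv 0,1,2,3\bmod 4$ bookkeeping) and then from $\bar X$ to $\tilde X$ via the exceptional-divisor computation above.

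Concretely the key steps in order: (1) write $\#\tilde X(\F_q) = \#\bar X(\F_q) + \sum_{\text{nodes}/\F_q}(\#Q - 1)$ where $Q$ is the exceptional quadric and the sum runs over Galois orbits of nodes, and reduce mod $4$; (2) compute $\#\bar X(\F_q)$ from $\#X(\F_q)$ plus the count on $\bar X\cap\{x_0=0\}$; (3) express $\#X(\F_q)=\sum_{t}N_t^2$ and use the symmetries of the Chebyshev polynomial $P$ to control each $N_t\bmod 2$; (4) assemble, treating the cases $q$ even and $q$ odd (and within the odd case $q\equiv 1,3\bmod 4$) separately, and check the exceptional-divisor term combines with the main term to give $0\bmod 4$. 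The main obstacle I expect is step (3)–(4): getting a clean enough handle on the fibers $N_t$ of the Chebyshev map modulo $2$ — in particular understanding the ramification of $P$ and how many $t$ have "exceptional" fiber behaviour — and then making sure the node contribution in step (1), which is sensitive to whether each $\PP^1\times\PP^1$ is split over $\F_q$, is organized by the Galois action on the $120$ nodes so that the odd-$q$ discrepancies cancel. The factor $q(1+q)$ and $q^3$ in \eqref{eq:trace} are themselves only divisible by $4$ under congruence conditions on $q$, so the lemma genuinely has to deliver the mod-$4$ count unconditionally in $q$, which is why the symmetry argument rather than a naive bound is essential.
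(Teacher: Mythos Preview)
Your decomposition into steps (1)--(4) is exactly the structure the paper uses, and your fibration $\#X(\F_q)=\sum_t N_t^2$ is a perfectly good alternative to the paper's $D_4$ fixed-point count (in fact the two are equivalent: the swap $y\leftrightarrow z$ gives $N_t\equiv\#\{y:P(y,y)=t\}\pmod 2$, hence $\sum_t N_t^2\equiv\#\{(y,z):P(y,y)=P(z,z)\}\pmod 4$, which is precisely the curve the paper lands on). But two concrete observations are missing from your sketch, and without them the argument does not close.

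First, for step (1) you worry about split versus non-split exceptional quadrics, and you are right that for odd $q$ the per-node contribution $(q+1)^2-1=q(q+2)$ is odd, so there is no per-node divisibility. The paper sidesteps this entirely: from Consani--Scholten's explicit list, the number of nodes defined over $\F_q$ is always one of $0,8,24,104,120$, in particular always $\equiv 0\pmod 8$. Hence $\#\tilde X(\F_q)\equiv\#\bar X(\F_q)\pmod 4$ with no case analysis on the quadric type. (In fact the rulings are also defined over $\Q(\zeta_{15})$, so the quadrics are split whenever the node is rational; but you do not need this.)

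Second, and more seriously, your step (3) stops at ``use symmetries of the Chebyshev polynomial to control $N_t\bmod 2$'' and appeals vaguely to the $2\cos(5\theta)$ interpretation. This is where the actual work is, and the paper does not use any trigonometric identity. Instead it reduces, as above, to the affine curve $C:\ P(y,y)=P(z,z)$, then substitutes $u=(y+z)/2$, $v=(y-z)/2$ to factor $P(y,y)-P(z,z)=v\cdot G(u,v)$ explicitly as a polynomial, and uses the involutions $v\mapsto -v$ and $u\mapsto 2-u$ on $\{G=0\}$ to reduce the count mod $4$ to the number of roots of $v^4-5v^2+5$ in $\F_q$, which is always $0$ or $4$. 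This yields $\#C(\F_q)\equiv q\pmod 4$, hence $\#X(\F_q)\equiv q\pmod 4$. Separately, the hyperplane at infinity $\bar X\setminus X$ is the Fermat quintic surface $S=\{x_1^5+x_2^5=x_4^5+x_5^5\}$, and the order-$4$ cyclic permutation of coordinates gives $\#S(\F_q)\equiv 1+q+q^2\pmod 4$, so $\#\bar X(\F_q)\equiv q+(1+q+q^2)-q\equiv 0\pmod 4$, wait, let me restate: $\#\bar X=\#X+\#S\equiv q+(1+q+q^2)\equiv(1+q)^2\equiv 0\pmod 4$ for odd $q$. Actually the paper records it as $\#\bar X\equiv\#X-q\pmod 4$ via $\#S\equiv -q\pmod 4$; either way one gets $\#\bar X\equiv 0$. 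Your outline gestures at step (2) but never identifies $S$ as a Fermat surface or supplies the symmetry argument for its count.

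In short: right architecture, but the two load-bearing computations (the $8$-divisibility of the node count, and the explicit factoring of $P(y,y)-P(z,z)$ followed by involutions on the factor) are absent, and the Chebyshev-trigonometry heuristic you offer in their place does not do the job.
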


If $q\equiv 1\mod 4$, then we furthermore need the following
\begin{lemma}\label{Lem:trace}
For any good prime $p$ and  $q=p^r$, $h_q$ is odd.
\end{lemma}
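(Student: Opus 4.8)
The plan is to reinterpret $h_q$ as a Frobenius trace on an \emph{algebraic} piece of cohomology and to decide its parity by a combinatorial fixed‑point count. Since $h^{2,0}(\tilde X)=0$, every class in $\HH^2(\tilde X_{\bar\Q},\Q_\ell)$ is algebraic, so the Tate twist $\HH^2(\tilde X_{\bar\Q},\Q_\ell)(1)$ is canonically $\mathrm{NS}(\tilde X_{\bar\Q})\otimes\Q_\ell$ as a $\Gal(\bar\Q/\Q)$‑module, and by definition $h_q$ is precisely the trace of $\Frob_q$ on this $141$‑dimensional module (a rational integer, as already used). The Galois action factors through $\Gal(L/\Q)$ for a fixed number field $L$ unramified outside $\{2,3,5\}$ — the field of definition of the $120$ nodes of $\bar X$ together with the finitely many extra ("defect'') divisor classes — and, since $\tilde X$ has good reduction at $p$, smooth proper base change turns $\Frob_q$ on $\HH^2(\tilde X_{\bar\Q},\Q_\ell)(1)$ into (the $r$‑th power of) the Frobenius class at $p$ inside $\Gal(L/\Q)$. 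Thus Lemma~\ref{Lem:trace} is equivalent to: the rational representation $\mathrm{NS}(\tilde X_{\bar\Q})\otimes\Q$ of $\Gal(L/\Q)$ has \emph{odd} character on every element — consistent with $\dim=141$ being odd.

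Next I would write down a Galois‑stable spanning set of $\mathrm{NS}(\tilde X_{\bar\Q})$ adapted to the resolution $\tilde X\to\bar X$: the hyperplane class $H$, which is Galois‑invariant and contributes $+1$ to every trace; the $120$ exceptional divisors $E_i$ over the nodes, on which $\Frob_q$ acts through the permutation of the node set (the $E_i$ are the effective irreducible exceptional divisors, hence are permuted; the possible non‑split structure of the quadrics $E_i\cong\PP^1\times\PP^1$ over $\F_q$ affects only $\HH^4$); and the remaining $141-1-120=20$ defect classes, coming from the Weil divisors on $\bar X$ that are not $\Q$‑Cartier, i.e.\ the surfaces swept out by the degenerate members of the two Chebyshev pencils $P(x_1,x_2)=t$, $P(x_4,x_5)=t$. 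Reading off the Galois action on the defect part from the (Chebyshev / fiber‑product) geometry, one sees it too is built from permutations of a finite set of algebraic cycles. Consequently, for every good $q$,
\[
h_q = 1 + \#\{\F_q\text{-rational nodes of }\bar X\} + \#\{\Frob_q\text{-fixed defect cycles}\}.
\]
The heart of the argument is the parity claim that the last two summands always add up to an even number. I would establish this by exhibiting an involution $\iota$ of the configuration, induced by an automorphism of $\tilde X$ defined over $\Q$ and hence commuting with every $\Frob_q$, which acts \emph{without fixed points} on the $140$‑element set formed by the $E_i$ and the defect cycles; then the $\Frob_q$‑fixed cycles fall into $\iota$‑orbits of size $2$, the count after the $1$ is even, and $h_q=1+\text{even}$ is odd. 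Equivalently — and this is probably the cleaner formulation — the $140$‑dimensional complement of $\langle H\rangle$ in $\mathrm{NS}\otimes\Q$ decomposes over $\Gal(L/\Q)$ into summands each induced from an index‑$2$ subgroup (or otherwise of even character), which again forces every Frobenius trace to be $1+\text{even}$. Combining this with Lemma~\ref{Lem:number} and the identity \eqref{eq:trace} (for $q\equiv 1\bmod 4$ one has $q(1+q)\equiv 2$ and $q^3\equiv 1\bmod 4$) yields $4\mid\trace\rho(\Frob_q)$, completing the proof of Proposition~\ref{Prop:4}.

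The main obstacle is the second step: pinning down exactly the $20$ defect divisor classes on $\bar X$ and the Galois action on them, together with the compatible fixed‑point‑free involution. This means unwinding the explicit geometry of the Chebyshev polynomial $P(y,z)$ — which members of the two pencils degenerate, into which components, and over which fields those components are defined — or, equivalently, passing to the birational model of $\tilde X$ as a fiber product of two rational elliptic surfaces and analysing its reducible fibers. The subtle point is the interaction with the node permutation: the obvious $\Q$‑involutions (the "half‑swap'' $(x_1,x_2,x_3,x_4,x_5)\mapsto(x_4,x_5,x_3,x_1,x_2)$ and similar) have two‑dimensional fixed loci inside $\bar X$ that do meet the node set, so one must either replace them by a genuinely fixed‑point‑free involution of the $140$ classes or argue directly, over the finite group $\Gal(L/\Q)$, that the relevant rational representation has constant parity. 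Once the defect module is made explicit, the parity bookkeeping and the passage back through \eqref{eq:trace} are routine.
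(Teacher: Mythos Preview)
Your proposal is a plausible strategy, but it is not a proof: the decisive step --- identifying the $20$ defect classes, the Galois action on them, and a compatible fixed-point-free involution on the $140$ classes --- is explicitly left as ``the main obstacle'', and nothing you write establishes it. Even the displayed formula
\[
h_q \;=\; 1 + \#\{\F_q\text{-rational nodes}\} + \#\{\Frob_q\text{-fixed defect cycles}\}
\]
presupposes that the $140$-dimensional complement of $\langle H\rangle$ admits a Galois-stable \emph{permutation basis} consisting of the $120$ exceptional divisors together with exactly $20$ geometric defect divisors. That is not automatic: the defect is a dimension, not a priori the cardinality of a canonical Galois-permuted set of cycles, and turning it into one would itself require the explicit Chebyshev-pencil analysis you defer. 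So as written there is a genuine gap at precisely the point you flag.

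The paper's proof sidesteps all of this and never tries to decompose the $140$-dimensional piece. From the Leray sequence it reads off that $\HH^2(\bar X)(1)$ is one-dimensional with trivial Galois action, and that every component of $E$ together with both rulings on each component is defined over $\Q(\zeta_{15})$; hence the entire action of $\Gal(\bar\Q/\Q)$ on $\HH^2(\tilde X)(1)$ factors through $\Gal(\Q(\zeta_{15})/\Q)$, a group of order~$8$. Consequently every eigenvalue $\zeta$ of $\Frob_q$ on $\HH^2(\tilde X)(1)$ satisfies $\zeta^8=1$, so $\sum_\zeta \zeta^8 = 141$. An elementary congruence for sums of roots of unity (Lemma~\ref{Lem:power}) then gives $h_q=\sum_\zeta \zeta \equiv \sum_\zeta \zeta^{8} = 141 \equiv 1 \pmod 2$. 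No description of the defect classes, no involution, and no case split over $(\Z/15\Z)^\times$ is needed; the only geometric input beyond what you already used is that the rulings, not just the exceptional components, are defined over $\Q(\zeta_{15})$.
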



\subsection{Proof of Lemma \ref{Lem:number}}

To prove Lemma \ref{Lem:number}, we use the action of the dihedral group $D_4$ on $\tilde X$ and the knowledge about the exceptional divisors from Consani-Scholten.

Let  $\zeta_n$ denote a primitive $n$-th root of unity. Then all the nodes are defined over $\Q(\zeta_{15})$. A detailed list can be found in \cite{consani}. Over the field of definition of the node, the exceptional divisor $E$ is isomorphic to $\PP^1\times\PP^1$. Hence $\# E(\F_q) = (q+1)^2$ if the node is defined over $\F_q$.

\begin{lemma}\label{Lem:exceptional}
For any good prime $p$ and $q=p^r$, $\# \tilde X(\F_q) \equiv \# \bar X(\F_q) \mod 32$.
\end{lemma}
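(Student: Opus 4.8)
The idea is to compare point counts on $\tilde X$ and its projective model $\bar X$ modulo a large power of $2$, exploiting the fact that $\tilde X$ is obtained from $\bar X$ by blowing up the $120$ nodes, each of which contributes an exceptional divisor $E\cong\PP^1\times\PP^1$ defined over the residue field of the node. The ambient Galois group acting on the nodes is the Galois group of $\Q(\zeta_{15})/\Q$, together with the $D_4$-action coming from the symmetry of the defining equation $P(x_1,x_2)=P(x_4,x_5)$ (swapping the two pairs of variables, and the two-fold symmetry $P(y,z)=P(z,y)$ of the Chebyshev polynomial). So the $120$ nodes fall into orbits under the combined action of $D_4\times\Gal(\Q(\zeta_{15})/\Q)$, and I would organize the count orbit by orbit.

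\medskip

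\textbf{Step 1: the blow-up bookkeeping.} For each good prime $p$ and $q=p^r$, write
\[
\#\tilde X(\F_q) \;=\; \#\bar X(\F_q) \;-\; \sum_{\text{nodes }P/\F_q} 1 \;+\; \sum_{\text{nodes }P/\F_q} \#E_P(\F_q),
\]
where the sum is over the nodes that are $\F_q$-rational (a node not defined over $\F_q$ contributes, together with its $\Frob_q$-conjugates, a contribution to $\bar X(\F_q)$ and to $\tilde X(\F_q)$ that already cancels, since the blow-up is an isomorphism away from the nodes and the whole conjugate-orbit of exceptional fibres is permuted freely). For an $\F_q$-rational node, $\#E_P(\F_q)-1=(q+1)^2-1=q^2+2q$. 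Hence
\[
\#\tilde X(\F_q)-\#\bar X(\F_q)\;=\;\big(q^2+2q\big)\cdot\#\{\F_q\text{-rational nodes}\}.
\]
Now $q=p^r$ is odd (we may assume $p\neq 2$; the case $p=2$ is excluded as a bad prime), so $q^2+2q=q(q+2)$ is odd, and therefore $\#\tilde X(\F_q)\equiv\#\bar X(\F_q)\pmod{2^k}$ will follow from $q^2+2q\equiv 0$ being false — so instead the statement must be that $\#\{\F_q\text{-rational nodes}\}\equiv 0\pmod{32}$, OR that $q^2+2q$ times that number is $\equiv 0\pmod{32}$. The real content, then, is: \emph{the number of $\F_q$-rational nodes is divisible by $32$ for every odd $q$.}

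\medskip

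\textbf{Step 2: counting $\F_q$-rational nodes modulo $32$.} Here I would use the explicit description of the $120$ nodes from \cite{consani} together with the group action. The group $D_4$ acts on $\tilde X$ (hence permutes the nodes) and is defined over $\Q$, so it commutes with $\Frob_q$ and acts on the set of $\F_q$-rational nodes; similarly $\Gal(\Q(\zeta_{15})/\Q)$ governs the fields of definition. The claim will reduce to showing that every $D_4$-orbit of nodes, intersected with the $\F_q$-rational ones, has size divisible by $8$ (giving $32$ once one combines with an extra factor of... ) — more precisely I expect the $120$ nodes to break into a small number of $D_4$-orbits, each of size $8$ (free orbits) possibly with a few of size $4$ or $2$ (nodes with stabilizer), and one checks that for each orbit either all of it is $\F_q$-rational or none of it is, depending only on a congruence condition on $q$ mod $15$; and that the orbits of size not divisible by $8$ either do not occur or pair up. Concretely, one lists the orbit representatives, records for each the subfield of $\Q(\zeta_{15})$ over which it is defined and its $D_4$-stabilizer, and then for each residue class of $q$ mod $60$ (i.e.\ mod $\mathrm{lcm}(4,15)$, though the mod-$4$ part only entered in Step 1 via parity) sums the sizes of the $\F_q$-rational orbits and checks divisibility by $32$. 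This is a finite verification.

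\medskip

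\textbf{Main obstacle.} The genuine work — and the part that is not purely formal — is Step 2: extracting from the Consani--Scholten node list the precise $D_4\times\Gal(\Q(\zeta_{15})/\Q)$ orbit structure, i.e.\ which nodes are fixed by which symmetries and over which cyclotomic subfield each orbit is rational. If all $120$ nodes lay in free $D_4$-orbits the divisibility by $8$ would be immediate and one would only need the remaining factor of $4$ from somewhere (perhaps from a further symmetry, e.g.\ the involution exchanging the two sides $P(x_1,x_2)\leftrightarrow P(x_4,x_5)$ is already in $D_4$, but there may be an additional scaling symmetry, or one uses that $120=8\cdot 15$ and the Galois action on the $15$ further subdivides things). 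I would expect that handling the finitely many nodes with nontrivial stabilizer — checking they are not $\F_q$-rational for the relevant $q$, or that they occur in pairs — is where the care is needed; everything else is the clean identity of Step 1 plus a mod-$60$ case check.
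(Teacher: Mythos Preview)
Your Step~1 is exactly right and matches the paper: the blow-up bookkeeping gives
\[
\#\tilde X(\F_q)-\#\bar X(\F_q)=N_q\cdot\big((q+1)^2-1\big)=N_q\cdot q(q+2),
\]
where $N_q$ is the number of $\F_q$-rational nodes. The paper's proof of the lemma, however, is much shorter than your Step~2: it simply quotes from \cite{consani} the explicit values of $N_q$ as a function of $q\pmod{15}$, namely $N_q\in\{0,8,24,104,120\}$, and observes that all of these are divisible by $8$. No $D_4$-orbit analysis is invoked; the table does the work.

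There is a genuine issue, though, which your proposal runs into and the paper glosses over: the modulus $32$ in the statement is too strong. For odd $q$ the factor $q(q+2)$ is odd, so the $2$-adic valuation of the difference equals that of $N_q$; and the minimal nonzero value $N_q=8$ (attained for $q\equiv 14\pmod{15}$, e.g.\ $q=29$) gives $8\cdot 29\cdot 31=7192\equiv 24\pmod{32}$. Thus neither your symmetry approach nor the paper's table argument can produce the claimed $\bmod\ 32$; both yield only $\bmod\ 8$. Your ``main obstacle'' --- locating an extra factor of $4$ --- is unfindable because that factor does not exist. Fortunately the only downstream use of the lemma (the proof of Lemma~\ref{Lem:number}) requires merely $\#\tilde X(\F_q)\equiv\#\bar X(\F_q)\pmod 4$, so the overstated modulus is harmless. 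For your write-up, replace Step~2 by: cite the Consani--Scholten node count, note $8\mid N_q$ in every case, and record the congruence $\bmod\ 8$.
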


\emph{Proof:} By \cite{consani}, we have
\[
\# \{\text{nodes over } \F_q\} = \begin{cases}
0, & q\equiv 2,7,8,13\mod 15,\\
8, & q\equiv 14 \mod 15,\\
24, & q\equiv 4 \mod 15,\\
104, & q\equiv 11\mod 15,\\
120, & q\equiv 1\mod 15.
\end{cases}
\]
Since the number of points on the exceptional divisor is the same for all nodes defined over $\F_q$, the claim follows. \qed

\begin{lemma}\label{Lem:oo}
For any good prime $p$ and $q=p^r$, $\# \bar X(\F_q) \equiv \# X(\F_q) - q \mod 4$.
\end{lemma}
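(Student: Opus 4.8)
The plan is to compare the projective variety $\bar X\subset\PP^4$ with its affine part $X\subset\AA^4$ by stratifying $\bar X$ according to the hyperplane at infinity. Writing $\bar X(\F_q)=X(\F_q)+\#(\bar X\cap H_\infty)(\F_q)$ where $H_\infty=\{x_0=0\}$, it suffices to show that the number of points at infinity on $\bar X$ is congruent to $-q$ modulo $4$. The points at infinity form the projective quintic surface $S_\infty\subset\PP^3$ cut out by the degree-$5$ part of the equation defining $\bar X$; since $P(y,z)$ has leading term $y^5+z^5$, the relevant form is $(x_1^5+x_2^5)-(x_4^5+x_5^5)=0$ in $\PP^3$ with coordinates $(x_1:x_2:x_4:x_5)$.

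First I would factor this form. Over $\bar\Q$ one has $u^5+v^5=\prod_{j=0}^4(u+\zeta_5^j v)$, so $S_\infty$ is a union of configurations of planes meeting along lines, and one can count $\#S_\infty(\F_q)$ by inclusion–exclusion depending on whether $5\mid q-1$ (equivalently $q\equiv 1,4\bmod 5$, when $\zeta_5\in\F_q$) or not. The key point is only the value modulo $4$. When $\zeta_5\notin\F_q$, the form $x_1^5+x_2^5$ is (up to the constant arising from the norm) equivalent to a single variable and the count collapses; when $\zeta_5\in\F_q$ the surface is a union of $25$ planes $x_1+\zeta_5^a x_2=\zeta_5^b(x_4+\zeta_5^c x_5)$ type equations, and the pairwise/triple/quadruple intersections are lines and points in $\PP^3$, each contributing $q+1$ or $1$ points. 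Carrying out the inclusion–exclusion and reducing modulo $4$, using $q+1\equiv q-1\equiv q+1$ appropriately and that the number of planes is odd, should produce exactly $\#S_\infty(\F_q)\equiv -q\bmod 4$ in both residue cases; I would double-check the $q\equiv 2,3\bmod 5$ case separately since there Consani–Scholten already know $\operatorname{trace}\rho(\Frob_q)=0$, which is a useful consistency check.

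The main obstacle I expect is the bookkeeping in the inclusion–exclusion for the plane arrangement $S_\infty$: one must correctly enumerate which subsets of the $25$ (or fewer) planes have nonempty common intersection and of what dimension, and track multiplicities without over- or under-counting. This is elementary combinatorics of a specific linear configuration, so it is more a matter of care than of ideas; once the incidence pattern is pinned down, the reduction modulo $4$ is immediate. An alternative, possibly cleaner route would be to use the projection $(x_1:\dots:x_5)\mapsto(x_1:x_2:x_4:x_5)$ realizing $\bar X$ as a degree-$5$ cover of $\PP^3$ branched along $S_\infty$, but the direct stratification above seems most transparent for extracting just the congruence modulo $4$.
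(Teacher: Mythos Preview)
Your reduction to the surface at infinity is correct and matches the paper: $\bar X\setminus X$ is the Fermat quintic surface $S=\{x_1^5+x_2^5=x_4^5+x_5^5\}\subset\PP^3$, so the lemma is equivalent to $\#S(\F_q)\equiv -q\equiv 1+q+q^2\pmod 4$.

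However, your proposed computation of $\#S(\F_q)$ rests on a false premise. The form $x_1^5+x_2^5-x_4^5-x_5^5$ does \emph{not} factor into linear forms, and $S$ is \emph{not} a union of planes. The factorisation $u^5+v^5=\prod_j(u+\zeta_5^j v)$ lets you rewrite the equation as $\prod_j(x_1+\zeta_5^j x_2)=\prod_j(x_4+\zeta_5^j x_5)$, but an equality of two products of linear forms is not itself a product of linear forms; the difference is an irreducible quintic. Indeed, the Fermat quintic surface is smooth in characteristic $\neq 5$ (the gradient $(5x_1^4,5x_2^4,-5x_4^4,-5x_5^4)$ vanishes only at the origin), hence irreducible. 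So there is no plane arrangement on which to run inclusion--exclusion, and the entire combinatorial scheme collapses.

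The paper instead exploits the order-$4$ cyclic permutation $\gamma:(x_1,x_2,x_4,x_5)\mapsto(x_4,x_5,x_2,x_1)$ acting on $\PP^3$. Modulo $4$ one only needs to count points of $S$ fixed by $\gamma^2$; the fixed locus $\mathrm{Fix}(\gamma^2)$ consists of the lines $\{[\lambda,\mu,\pm\lambda,\pm\mu]\}$, one of which lies entirely on $S$ and the others of which meet $S$ in $(5,q-1)$ further points (for $p\neq 2$). This yields $\#S(\F_q)\equiv 1+q+q^2\pmod 4$ directly. If you want to salvage your approach, you would need an actual point-count for the smooth Fermat quintic (e.g.\ via Weil's Jacobi-sum formulas), which is considerably heavier than what the congruence requires.
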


\emph{Proof:} The affine variety $X$ is compactified by adding a smooth surface at $\infty$. In fact, this is the Fermat surface of degree five:
\[
S=\{x_0^5+x_1^5-x_3^5-x_4^5=0\}\subset\PP^3.
\]
Hence $\# \bar X(\F_q) -  \# X(\F_q) = \# S(\F_q).$ Thus Lemma \ref{Lem:oo} amounts to the following

\begin{lemma}\label{Lem:Fermat}
For $p\neq 5$ and $q=p^r$, $\# S(\F_q)\equiv 1+q+q^2\mod 4$.
\end{lemma}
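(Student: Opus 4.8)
The statement concerns the Fermat quintic surface $S = \{x_0^5+x_1^5-x_3^5-x_4^5=0\}\subset\PP^3$, and the plan is to count its $\F_q$-points via Gauss/Jacobi sums and reduce everything modulo $4$. First I would recall the classical formula for the number of points on a diagonal hypersurface: writing $N$ for the number of projective points and using characters of $\F_q^*$, one has $\# S(\F_q) = 1 + q + q^2$ plus a sum of Jacobi-sum terms indexed by nontrivial tuples of characters $(\chi_0,\chi_1,\chi_3,\chi_4)$ of order dividing $5$ with $\prod \chi_i$ trivial. If $q\not\equiv 1\bmod 5$ there are no such nontrivial characters of order $5$, so $\# S(\F_q) = 1+q+q^2$ exactly and the congruence is immediate. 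The remaining case is $q\equiv 1\bmod 5$, where one must show the correction term vanishes modulo $4$.

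In the case $q\equiv 1\bmod 5$, each Jacobi sum $J(\chi_i)$ involved is an algebraic integer in $\Z[\zeta_5]$ of absolute value $\sqrt q$ (for a length-$3$ partial product appearing here, the relevant term has absolute value $q$), and the correction term is a sum of such products times signs coming from the $x_3,x_4$ having negative coefficients. The key arithmetic input is that $2$ is inert (or at least unramified with residue degree $4$) in $\Z[\zeta_5]/\Z$, so reduction modulo $2$ lands in $\F_{16}$, and more importantly that these Gauss-sum contributions come in Galois-conjugate orbits under $\Gal(\Q(\zeta_5)/\Q)$, hence in packets of size dividing $4$. I would group the characters into $\Gal(\F_q/\F_p)$- or $\Gal(\Q(\zeta_5)/\Q)$-orbits and show each orbit-sum is even, indeed divisible by $4$, using that conjugate Jacobi sums are congruent modulo $2$ and that an orbit has even size unless the character is fixed, in which case it is quadratic — impossible here since $5$ is odd. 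An alternative, probably cleaner, route: observe that $S$ admits the free action of a group of order $5$ (cyclic, e.g.\ $x_i\mapsto \zeta_5^{a_i}x_i$ with suitable exponents fixing the equation) on a dense open part, and combine this with the obvious symmetries swapping the two pairs of coordinates to force $\#S(\F_q)$ into a fixed residue class mod small powers of $2$; one then pins down that class by comparison with $\#\PP^2(\F_q)=1+q+q^2$ via a degeneration or a direct check.

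The main obstacle is the case $q\equiv 1\bmod 5$: one genuinely has to control the mod-$4$ reduction of a sum of Jacobi sums, and naive bounds give nothing. I expect the cleanest argument is the Galois-orbit one — pairing each character tuple with its nontrivial Galois conjugates so that conjugate Jacobi-sum products cancel in pairs modulo $2$, and then upgrading $2\mid$ to $4\mid$ by noting the orbits actually have size $4$ (the decomposition group of $2$ in $\Q(\zeta_5)$ being all of $\Gal$) together with a congruence $J\equiv \bar J\pmod{2}$ among conjugates. Care is needed with the "$-x_3^5-x_4^5$" signs, which contribute a character-value factor $\chi_3(-1)\chi_4(-1)$; since $\chi_3,\chi_4$ have odd order, $\chi_i(-1)=1$, so these signs are in fact trivial and cause no trouble. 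Once the correction term is shown to be $\equiv 0 \bmod 4$, the lemma follows, and with it Lemmas \ref{Lem:oo}, \ref{Lem:exceptional}, and ultimately \ref{Lem:number}.
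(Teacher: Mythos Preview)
Your treatment of $q\not\equiv 1\pmod 5$ is fine: the fifth-power map is a bijection on $\F_q$, so $\#S(\F_q)=1+q+q^2$ on the nose.

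For $q\equiv 1\pmod 5$, the Galois-orbit argument has a real gap. You are right that the nontrivial character tuples break into orbits of size exactly $4$ under $\Gal(\Q(\zeta_5)/\Q)$, and that each orbit contributes the $\Q(\zeta_5)/\Q$-trace of a Jacobi sum, hence an integer. But a trace of four conjugate algebraic integers is not automatically divisible by $4$; the inertness of $2$ in $\Z[\zeta_5]$ and the vague congruence ``$J\equiv\bar J\pmod 2$'' do not force this. Pairing $J$ with its complex conjugate $\sigma^2(J)$ inside each orbit does yield evenness of each orbit-sum, but the step from $2\mid$ to $4\mid$ is precisely where the difficulty lies, and you have not supplied it.

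The paper's argument is quite different and entirely elementary. It exploits the order-$4$ coordinate permutation $\gamma\colon(x_1,x_2,x_3,x_4)\mapsto(x_3,x_4,x_2,x_1)$ acting on $S$: since $\gamma$-orbits in $S(\F_q)$ have size dividing $4$, one gets
\[
\#S(\F_q)\equiv\#\bigl(S\cap\mathrm{Fix}(\gamma^2)\bigr)(\F_q)\pmod 4.
\]
The fixed locus $\mathrm{Fix}(\gamma^2)\subset\PP^3$ is a union of four lines $\{[\lambda,\mu,\pm\lambda,\pm\mu]\}$; one of them lies on $S$, and the remaining intersection with $S$ consists of $(5,q-1)$ points (zero if $p=2$). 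A one-line check then gives $1+q+(5,q-1)\equiv 1+q+q^2\pmod 4$. Your ``alternative route'' invokes an order-$5$ scaling action and the pairwise coordinate swaps; neither of these is the order-$4$ cycle $\gamma$ that actually does the work. The swaps alone generate only $\gamma^2$, which controls the count modulo $2$ but not modulo $4$.

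If you want to rescue the Jacobi-sum approach, the missing ingredient is again $\gamma$, now acting on ordered character tuples: the tuples not fixed by $\gamma^2$ sit in free $\gamma$-orbits of size $4$ and contribute a multiple of $4$ (since $J$ is symmetric in its arguments), while the handful of $\gamma^2$-fixed tuples $(a,a,5{-}a,5{-}a)$ have Gauss-sum product $g(\chi)^2 g(\bar\chi)^2=q^2$, contributing $\pm q$ each and hence again a multiple of $4$ in total. But at that point you have reproduced the paper's idea in harder language.
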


The proof of this lemma will be postponed to the end of this subsection. Lemma \ref{Lem:oo} follows. \qed

\medskip

To prove the corresponding statement about the affine variety $X$, we use the action of the dihedral group $D_4$ generated by the involutions
\[
(x_1, x_2, x_3, x_4) \mapsto (x_2, x_1, x_3, x_4),\;\;\; (x_1, x_2, x_3, x_4) \mapsto (x_1, x_2, x_4, x_3)
\]
and by the cyclic permutation
\begin{eqnarray}\label{eq:cycl}
\gamma:\;\;\;(x_1, x_2, x_3, x_4) \mapsto (x_3, x_4, x_2, x_1).
\end{eqnarray}
It follows that
\[
\# X(\F_q)  \equiv  \# \{x\in X(\F_q); \#\{y\in (D_4-\text{orbit of }x)\}<4\} \mod 4.
\]
Here $\{x\in X(\F_q); \#\{y\in (D_4-\text{orbit of }x)\}<4\} = \{(x_1, x_1, x_3, x_3)\in X(\F_q)\}$.
We are led to consider the affine curve $C$ in $\mathbb{A}^2$ defined by
\[
C:\;\;\; P(y,y) = P(z,z).
\]
Then the above subset of $X(\F_q)$ is in bijection with $C(\F_q)$, and we obtain
\begin{eqnarray}\label{eq:C}
\# X(\F_q)\equiv C(\F_q)\mod 4.
\end{eqnarray}

\begin{lemma}\label{Lem:C}
For any good prime $p$ and $q=p^r$, $\# C(\F_q) \equiv q \mod 4$.
\end{lemma}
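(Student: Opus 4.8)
The plan is to analyze the affine curve $C:\;P(y,y)=P(z,z)$ in $\mathbb{A}^2$ directly. First I would compute the one-variable polynomial $Q(t) := P(t,t)$ explicitly; since $P(y,z)=(y^5+z^5)-5yz(y^2+z^2)+5yz(y+z)+5(y^2+z^2)-5(y+z)$, substituting $y=z=t$ gives $Q(t) = 2t^5 - 10t^4 + 10t^3 + 10t^2 - 10t = 2t(t^4-5t^3+5t^2+5t-5)$. So $C$ is cut out by $Q(y)=Q(z)$, i.e.\ $Q(y)-Q(z)=0$. The diagonal $y=z$ is obviously a component; factoring it out, $Q(y)-Q(z) = (y-z)\,R(y,z)$ for a symmetric polynomial $R$ of bidegree $(4,4)$, and $C$ decomposes (over $\bar{\F}_q$, for $p\neq 2$) as the union of the line $L:\{y=z\}$ and the curve $C':\{R(y,z)=0\}$, meeting in the points where $Q'$ vanishes appropriately.

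Next I would count $\#C(\F_q)$ via inclusion--exclusion: $\#C(\F_q) = \#L(\F_q) + \#C'(\F_q) - \#(L\cap C')(\F_q)$. We have $\#L(\F_q)=q$, so Lemma \ref{Lem:C} reduces to showing $\#C'(\F_q) \equiv \#(L\cap C')(\F_q) \mod 4$. To handle $C'$ I would again exploit a symmetry: the involution $(y,z)\mapsto(z,y)$ acts on $C'$, and its fixed locus is exactly $L\cap C'$ (the points with $y=z$ on $R=0$, which are the points where $Q'(t)$ has a double-ish behaviour, concretely where $R(t,t)=0$). Points of $C'(\F_q)$ off the diagonal come in pairs under this involution, so $\#C'(\F_q) \equiv \#(L\cap C')(\F_q) \mod 2$ immediately; to upgrade this to a congruence mod $4$ I would look for a second commuting involution on $C'$ (coming from the remaining symmetries of the Chebyshev polynomial $P$, or from a root-permuting automorphism of $Q$), so that the group acting is $C_2\times C_2$ and generic orbits have size $4$, reducing the count mod $4$ to the union of the three fixed-point sets of the three involutions. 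Each of those fixed-point sets is a finite set of points defined over $\Q(\zeta_{15})$ (or a small extension), and one computes the number of them rational over $\F_q$ as an explicit function of $q\bmod 15$ (or $q\bmod n$ for the relevant $n$), exactly as Consani-Scholten do for the nodes; summing these contributions with signs should land on $q\bmod 4$.

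The main obstacle I expect is producing the \emph{second} involution (or more precisely, verifying that the finite curve $R(y,z)=0$ really does carry a $C_2\times C_2$-action whose quotient behaviour controls the count mod $4$): the first $C_2$ is automatic from the swap symmetry and only gives mod $2$, and the Chebyshev polynomial's extra symmetries are not obviously symmetries of the \emph{pair} $(y,z)$ simultaneously. If no clean group action is available, the fallback is to count $\#C'(\F_q)$ directly by a more hands-on argument---e.g.\ writing $C'$ as a fiber product over the $t$-line via $Q$ and tracking the ramification of $Q:\mathbb{A}^1\to\mathbb{A}^1$, which has degree $5$ and whose branch points and monodromy are governed by the factorization $Q(t)=2t(t^4-5t^3+5t^2+5t-5)$ and the arithmetic of the quartic factor. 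Either way the endgame is the same: reduce to a finite set of geometric points with field of definition inside a cyclotomic field and evaluate the point count as a periodic function of $q$, checking it is $\equiv q\bmod 4$ in every residue class. Once Lemma \ref{Lem:C} is in hand, combining \eqref{eq:C}, Lemma \ref{Lem:oo}, Lemma \ref{Lem:exceptional}, and Lemma \ref{Lem:Fermat} yields Lemma \ref{Lem:number}.
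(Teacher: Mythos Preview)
Your strategy---strip off the diagonal $y=z$ and then control the residual curve $C'$ mod $4$ via a $C_2\times C_2$-action---is exactly the paper's, and your inclusion--exclusion bookkeeping is fine. The gap is precisely the one you flag: you have not produced the second involution, and without it the swap $(y,z)\mapsto(z,y)$ only gives a congruence mod $2$. The missing involution does exist and is simply $(y,z)\mapsto(2-y,2-z)$; it preserves $C$ because $Q(t)=P(t,t)$ satisfies the point-symmetry $Q(t)+Q(2-t)=\mathrm{const}$, so $Q(y)-Q(z)$ changes sign under it. Your proposed fallback (monodromy of the degree-$5$ map $Q$, or a cyclotomic case split on $q\bmod 15$) is unnecessary and would be considerably heavier.

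The paper packages both involutions at once by substituting $u=(y+z)/2$, $v=(y-z)/2$: one finds $Q(y)-Q(z)=v\,G(u,v)$ with
\[
G(u,v)=v^{4}+5(2u^{2}-4u+1)\,v^{2}+5(u^{2}-3u+1)(u^{2}-u-1),
\]
which is a polynomial in $v^{2}$ and $(u-1)^{2}$. So on $B=\{G=0\}$ the involutions $v\mapsto -v$ and $u\mapsto 2-u$ are manifest; on $B\cap\{v\neq 0\}$ the only orbits of size $<4$ lie over $u=1$, whence
\[
\#(B(\F_q)\cap\{v\neq 0\})\;\equiv\;\#\{v\in\F_q:\,v^{4}-5v^{2}+5=0\}\pmod{4}.
\]
The endgame is a one-liner rather than a periodic case analysis: the two roots $w=(5\pm\sqrt{5})/2$ of $w^{2}-5w+5$ have product $5$, so if $\sqrt{5}\notin\F_q$ there are no roots, and if $\sqrt{5}\in\F_q$ the product is the square $(\sqrt{5})^{2}$, forcing both $w$'s to be squares or both non-squares. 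Either way the quartic has $0$ or $4$ roots in $\F_q$, and $\#C(\F_q)\equiv q\pmod 4$ follows.
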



\emph{Proof of Lemma \ref{Lem:C}:} $C$ is reducible. The change of variables
\[
u=\frac{y+z}2,\;\;\; v=\frac{y-z}2
\]
allows us to write
$$
P(y,y)-P(z,z) = v (v^4 + 5 (2u^2-4u+1)v^2 + 5 (u^2-3u+1)(u^2-u-1))=v G(u,v).
$$
Hence
\begin{eqnarray}\label{eq:C-B}
\# C(\F_q) = q + \# (B(\F_q)\cap\{v\neq 0\}) 
\end{eqnarray}
where $B$ is the affine curve in $\mathbb{A}^2$ given by $G(u, v)=0$. Here $B$ is endowed with involutions
\[
(u, v) \mapsto (u, -v), \;\;\;\ (u, v)\mapsto (2-u, v).
\]
For the number of points, this implies
\begin{eqnarray*}
\# (B(\F_q)\cap\{v\neq 0\}) & \equiv & \# (B(\F_q) \cap \{u=1, v\neq 0\}) \mod 4\\
& = & \# \{v\in\F_q; v^4-5v^2+5=0\}.
\end{eqnarray*}
The last polynomial factors as
\[
4 (v^4-5v^2+5) = (2v^2-5-\sqrt 5)\,(2v^2-5+\sqrt 5).
\]
Since $\frac{5+\sqrt 5}2\cdot\frac{5-\sqrt 5}2=4$, a square,
we deduce that the last equation has either zero or four solutions in $\F_q$. In particular, (\ref{eq:C-B}) reduces to $\# C(\F_q) \equiv q\mod 4$, i.e.~to the claim of Lemma \ref{Lem:C}. \qed

\medskip

\emph{Proof of Lemma \ref{Lem:Fermat}:} We shall again use the cyclic permutation $\gamma$ from (\ref{eq:cycl}), but this time it operates on the homogeneous coordinates of $\PP^3$. Hence
\begin{eqnarray}\label{eq:S-fix}
\# S(\F_q) \equiv \# (S\cap \text{Fix}(\sigma^2))(\F_q) \mod 4.
\end{eqnarray}
Here
\[
\text{Fix}(\sigma^2) = \{[\lambda, \mu, \pm\lambda, \pm\mu]; [\lambda,\mu]\in\PP^1\}.
\]
One of these lines is contained in $S$, and it is easy to see that there are exactly $(5, q-1)$ further points of intersection unless $p=2$. I.e.~
\[
\# (S\cap \text{Fix}(\sigma^2))(\F_q) = 1+q+\begin{cases}
0, & p=2\\
(5,q-1), & p\neq 2
\end{cases}
\;\,\equiv 1+q+q^2 \mod 4.
\]
Lemma \ref{Lem:Fermat} follows from this congruence and (\ref{eq:S-fix}). \qed

\medskip

\emph{Proof of Lemma \ref{Lem:number}:}
Lemma \ref{Lem:C} and \eqref{eq:C} imply that $\#X(\F_q)\equiv q\mod 4$.
By Lemma \ref{Lem:oo} this gives $\#\bar X(\F_q)\equiv 0 \mod 4$.
The according statement for $\tilde X$ is obtained from Lemma \ref{Lem:exceptional}.
\qed

\subsection{Proof of Lemma \ref{Lem:trace}}

Lemma \ref{Lem:trace} states that the trace $h_q$ of Frob$_q$ on $H^2(\tilde X_{\bar\Q}, \Q_\ell(1))$ is always odd. We shall first prove the following auxiliary result:

\begin{lemma}\label{Lem:Gal}
The Galois group $\Gal(\bar\Q/\Q(\zeta_{15}))$ acts trivially on $\HH^2(\tilde X_{\bar\Q}, \Q_\ell(1))$.
\end{lemma}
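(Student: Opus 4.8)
The strategy is to show that $\HH^2(\tilde X_{\bar\Q},\Q_\ell(1))$ is spanned by divisor classes each of which is individually defined over $\Q(\zeta_{15})$; then $\Gal(\bar\Q/\Q(\zeta_{15}))$ fixes a spanning set and hence acts trivially. The key input is that $h^{2,0}=0$, so by the exponential sequence (Lefschetz $(1,1)$) the whole of $\HH^2(\tilde X_{\bar\Q},\Q_\ell(1))$ is generated by algebraic cycles, and one only needs to exhibit enough divisors defined over $\Q(\zeta_{15})$ to span. I would split the $141$-dimensional space into the part coming from the resolution and the part coming from $\bar X$ itself.

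First I would treat the exceptional divisors. By Consani--Scholten all $120$ nodes of $\bar X$ are defined over $\Q(\zeta_{15})$, and over its field of definition each node is resolved by an $E\cong\PP^1\times\PP^1$. Each such $E$ contributes a class (or two, coming from the two rulings) in $\HH^2(\tilde X_{\bar\Q},\Q_\ell(1))$ which is defined over the field of definition of the node, hence over $\Q(\zeta_{15})$. So all the exceptional classes are $\Gal(\bar\Q/\Q(\zeta_{15}))$-invariant. Next I would account for the classes pulled back from $\bar X$ (equivalently, the pullbacks under $\tilde X\to\bar X$ of generators of $\HH^2(\bar X)$ together with the hyperplane class). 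Here one uses that $\bar X$ is a quintic hypersurface in $\PP^4$ whose relevant divisor classes come from explicit geometric constructions on the Chebyshev/Fermat configuration — the lines and planes already visible over $\Q(\zeta_{15})$, used by Consani--Scholten to analyze the nodes and the surface $S$ at infinity. These are all defined over $\Q(\zeta_{15})$ as well. Counting: the $120$ exceptional divisors together with the classes from $\bar X$ must total $141$, and the point is merely that each member of some spanning set is individually $\Q(\zeta_{15})$-rational.

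Once a $\Q(\zeta_{15})$-rational spanning set of $\HH^2(\tilde X_{\bar\Q},\Q_\ell(1))$ is in hand, the lemma is immediate: $\sigma\in\Gal(\bar\Q/\Q(\zeta_{15}))$ fixes every generator, hence acts as the identity. (If one prefers to avoid an explicit divisor count, one can instead argue via the permutation action already noted in the proof of Lemma \ref{Lem:number}: Frob$_q^*$ acts on $\HH^2$ through a permutation of a fixed $\Q(\zeta_{15})$-rational basis of cycle classes, and the Galois action factors through the same permutation representation, which is trivial once $q$ kills the relevant roots of unity, i.e.~over $\Q(\zeta_{15})$.)

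The main obstacle is the bookkeeping: one must actually produce a spanning set of $141$ divisor classes, all defined over $\Q(\zeta_{15})$, rather than merely knowing $h^{1,1}=141$. Most of this is available from \cite{consani} — the node list, the identification of the divisor at infinity with the Fermat quintic surface $S$, and the line/plane configuration on $S$ — but one has to be careful that the classes coming from $\bar X$ are genuinely algebraic over $\Q(\zeta_{15})$ and that together with the exceptional $\PP^1\times\PP^1$'s they span, with no transcendental or higher-degree contribution left over. Given $h^{2,0}=0$ this is guaranteed in principle by Lefschetz $(1,1)$, so the remaining work is to check the field of definition of each piece, which is where I expect the effort to lie.
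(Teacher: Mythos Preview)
Your strategy is the paper's: show that $\HH^2(\tilde X_{\bar\Q},\Q_\ell(1))$ is built from cycle classes individually defined over $\Q(\zeta_{15})$. The paper packages this via the Galois-equivariant exact sequence from the blow-up,
\[
0 \;\to\; \HH^2(\bar X_{\bar\Q}, \Q_\ell(1)) \;\to\; \HH^2(\tilde X_{\bar\Q}, \Q_\ell(1)) \;\to\; \HH^2(E_{\bar\Q}, \Q_\ell(1)),
\]
and then observes that $\Gal(\bar\Q/\Q(\zeta_{15}))$ acts trivially on both outer terms.

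Where you make life harder than necessary is the ``classes pulled back from $\bar X$'' step. You anticipate having to chase lines, planes, and the Fermat configuration to fill out the count to $141$. In fact $\bar X$ is a (singular) quintic hypersurface in $\PP^4$, so by Lefschetz $\HH^2(\bar X_{\bar\Q},\Q_\ell(1))$ is one-dimensional, generated by the hyperplane section --- already defined over $\Q$. Any lines or planes you find on $\bar X$ are just numerical multiples of this single class. Once this is in hand, together with the fact (from \cite{consani}) that all $120$ nodes \emph{and both rulings} on each exceptional $\PP^1\times\PP^1$ are defined over $\Q(\zeta_{15})$, the exact sequence finishes the argument immediately: the middle term sits Galois-equivariantly between two spaces with trivial action. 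No explicit tally to $141$ is needed; the ``bookkeeping obstacle'' you flag evaporates.
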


\emph{Proof:} Denote the exceptional locus of the blow-up by $E$.
Then $E$ is defined over $\Q$.
The  Leray spectral sequence for the desingularisation gives an exact sequence
\begin{eqnarray}\label{eq:es}
0 \to \HH^2(\bar X_{\bar\Q}, \Q_\ell(1)) \to \HH^2(\tilde X_{\bar\Q}, \Q_\ell(1)) \to \HH^2(E_{\bar\Q}, \Q_\ell(1)).
\end{eqnarray}
By construction, (\ref{eq:es}) is compatible with the Galois action.
Here $\HH^2(\bar X_{\bar\Q}, \Q_\ell(1))$ is the same as for a general quintic hypersurface in $\PP^4$.
Hence it has dimension one and is generated by the class of a hyperplane section.
In particular, $\Gal(\bar\Q/\Q)$ acts trivially on $\HH^2(\bar X_{\bar\Q}, \Q_\ell(1))$.
Recall that every component of $E$ as well as both rulings on every component are defined over $\Q(\zeta_{15})$.
Hence $\Gal(\bar\Q/\Q(\zeta_{15}))$ acts trivially on $\HH^2(E_{\bar\Q}, \Q_\ell(1))$.
By the Galois-equivariant exact sequence \eqref{eq:es},
the same holds for $\HH^2(\tilde X_{\bar\Q}, \Q_\ell(1))$.
\qed

\medskip

It follows from Lemma \ref{Lem:Gal}, that $h_q=141$ if $q\equiv 1\mod
15$. The prove the parity for the other residue classes, we need two
easy statements about sums of primitive roots of unity. They involve
the M\"obius function $\mu: \NN \to \{-1,0,1\}$:
\[
\mu(n)=\begin{cases}
0, & $n$ \text{ not squarefree},\\
(-1)^m, & $n$ \text{ squarefree with $m$ prime divisors}.
\end{cases}
\]

\begin{lemma}
Let $n\in\NN$ and $\zeta_n$ a primitive $n$-th root of unity. Then $\zeta_n$ has trace $\mu(n)$.
\end{lemma}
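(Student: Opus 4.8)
The plan is to read ``the trace of $\zeta_n$'' as the trace of the algebraic number $\zeta_n$ for the extension $\QQ(\zeta_n)/\QQ$, i.e.\ the sum of its Galois conjugates, and then to evaluate this Ramanujan-type sum by M\"obius inversion against the elementary identity $\sum_{k=1}^n\zeta_n^k=\delta_{n,1}$.

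\textbf{Step 1.} Fix $\zeta_n=e^{2\pi i/n}$. Since $\Phi_n$ is the minimal polynomial of $\zeta_n$ over $\QQ$, the conjugates of $\zeta_n$ are precisely the primitive $n$-th roots of unity, so
\[
\trace(\zeta_n)=c_n:=\sum_{\substack{1\le k\le n\\ \gcd(k,n)=1}}\zeta_n^k .
\]

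\textbf{Step 2.} Sort the complete sum $\sum_{k=1}^n\zeta_n^k$ by the divisor $d=n/\gcd(k,n)$ of $n$: writing $k=(n/d)\,j$ with $1\le j\le d$ and $\gcd(j,d)=1$ turns $\zeta_n^k$ into $\zeta_d^{\,j}$, whence
\[
\sum_{k=1}^n \zeta_n^k=\sum_{d\mid n} c_d .
\]
The left-hand side equals $1$ for $n=1$ and $0$ for $n>1$ (being the value at $\zeta_n$ of $x^{n-1}+\dots+x+1$), so $\sum_{d\mid n}c_d=\delta_{n,1}$ for every $n$.

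\textbf{Step 3.} Apply M\"obius inversion to the relation of Step 2:
\[
c_n=\sum_{d\mid n}\mu(n/d)\,\delta_{d,1}=\mu(n),
\]
which is the assertion. (Alternatively one could induct on $n$ using $x^n-1=\prod_{d\mid n}\Phi_d(x)$ and compare subleading coefficients, since $\trace(\zeta_n)$ is minus the coefficient of $x^{\varphi(n)-1}$ in $\Phi_n$; but the M\"obius computation is shorter.) There is no genuine obstacle here: the only points needing a moment's care are the reindexing in Step 2 and the elementary vanishing of $\sum_{k=1}^n\zeta_n^k$ for $n>1$.
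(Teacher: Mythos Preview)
Your proof is correct and is essentially the argument the paper has in mind: the paper's one-line hint (``follows immediately from the factorisation of the cyclotomic polynomial $x^n-1$ and the definition of $\mu(n)$'') amounts exactly to your Step~2 partition of the $n$-th roots of unity into primitive $d$-th roots for $d\mid n$ followed by M\"obius inversion, which you have spelled out in full. The alternative you mention in parentheses (comparing subleading coefficients in $\prod_{d\mid n}\Phi_d=x^n-1$) is another literal reading of the paper's hint, but it unwinds to the same identity.
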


The lemma follows immediately from the factorisation of the cyclotomic polynomial $x^n-1$ and the definition of $\mu(n)$.

\begin{lemma}\label{Lem:power}
Let $n\in\NN$ and $\zeta_n$ a primitive $n$-th root of unity. Let $m=2^s\cdot k$ with $(k,n)=1$. Then
\[
\mu(n) = \text{trace } \zeta_n =
\sum_{j\in(\ZZ/n\ZZ)^*} \zeta_n^j \equiv
\sum_{j\in(\ZZ/n\ZZ)^*} \zeta_n^{mj} \mod 2.
\]
\end{lemma}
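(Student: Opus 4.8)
The plan is to notice first that the two equalities $\mu(n)=\trace\zeta_n=\sum_{j\in(\ZZ/n\ZZ)^*}\zeta_n^{j}$ carry no new information: they are the preceding lemma combined with the canonical isomorphism $\Gal(\Q(\zeta_n)/\Q)\cong(\ZZ/n\ZZ)^*$, under which $j$ corresponds to the automorphism $\zeta_n\mapsto\zeta_n^{j}$, so the Galois trace of $\zeta_n$ is exactly that sum. The only substantive assertion is the congruence modulo $2$. To organize it I would set $T(a)=\sum_{j\in(\ZZ/n\ZZ)^*}\zeta_n^{aj}\in\ZZ[\zeta_n]$ for $a\in\ZZ$, so that $T(1)=\mu(n)$ and the claim becomes $T(m)-T(1)\in 2\,\ZZ[\zeta_n]$.

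The first step is to absorb the factor $k$. Since $(k,n)=1$, multiplication by $k$ permutes $(\ZZ/n\ZZ)^*$, so reindexing $j\mapsto kj$ gives $T(m)=T(2^{s}k)=T(2^{s})$, an exact identity in $\ZZ[\zeta_n]$ (so in particular the conclusion will not depend on the chosen factorisation $m=2^{s}k$). It therefore suffices to prove $T(2^{s})\equiv T(1)\pmod 2$.

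The second — and essentially the only real — step is to pass to the quotient ring $R=\ZZ[\zeta_n]/2\,\ZZ[\zeta_n]$, which has characteristic $2$. There the Frobenius $x\mapsto x^{2}$ is a ring endomorphism, hence so is its $s$-fold iterate $x\mapsto x^{2^{s}}$; applying it to $\sum_{j}\zeta_n^{j}$ and using additivity and multiplicativity yields $T(1)^{2^{s}}\equiv\sum_{j}(\zeta_n^{j})^{2^{s}}=\sum_{j}\zeta_n^{2^{s}j}=T(2^{s})\pmod 2$, where both sums run over the same index set $(\ZZ/n\ZZ)^*$. Finally $T(1)=\mu(n)\in\{-1,0,1\}$, so $\mu(n)^{2}-\mu(n)$ is even and $\mu(n)^{2^{s}}\equiv\mu(n)\pmod 2$ by an immediate induction on $s$. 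Chaining these, $T(m)=T(2^{s})\equiv T(1)^{2^{s}}=\mu(n)^{2^{s}}\equiv\mu(n)=T(1)\pmod 2$, which is the assertion.

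I do not expect a genuine obstacle here: the only idea is to read the congruence off from the characteristic-$2$ Frobenius rather than from cyclotomic identities, after which everything is mechanical. The points that need a little care are that the reindexing by $k$ makes both sides indexed by $(\ZZ/n\ZZ)^*$, so that Frobenius may be applied term by term, and that $T(1)$ lands in $\{-1,0,1\}$, which is exactly what trivializes $\mu(n)^{2^{s}}\equiv\mu(n)$.
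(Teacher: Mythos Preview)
Your proof is correct and takes a genuinely different route from the paper. The paper argues by case analysis on the $2$-adic valuation of $n$: when $(m,n)=1$ the map $j\mapsto mj$ permutes $(\ZZ/n\ZZ)^*$ and the two sums are literally equal; when $2\parallel n$ it identifies $\{\zeta_n^{mj}\}$ with the primitive $(n/2)$-th roots of unity, giving the sum value $\mu(n/2)=-\mu(n)$; and when $4\mid n$ it shows each value in the multiset $\{\zeta_n^{mj}\}$ occurs with even multiplicity, so the sum is even while $\mu(n)=0$. You instead reduce to $m=2^{s}$ by the same reindexing, then read the congruence off from the Frobenius endomorphism of $\ZZ[\zeta_n]/2\ZZ[\zeta_n]$, obtaining $T(2^{s})\equiv T(1)^{2^{s}}=\mu(n)^{2^{s}}\equiv\mu(n)\pmod 2$ in one stroke. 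Your argument is shorter and avoids all casework; the paper's explicit computation yields the exact value of $\sum_j\zeta_n^{mj}$ in each case, not just its parity, but only the parity is used afterwards.
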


\emph{Proof:}
If $(m,n)=1$, then taking $m$-th powers permutes the primitive $n$-th roots of unity and both sums coincide. Hence it suffices to consider the case where $m=2^s (s>0)$ and $2|n$.

If $4\nmid n$, then $\{\zeta_n^{mj}; j\in(\ZZ/n\ZZ)^*\}$ is the set of $\frac n2$-th primitive roots of unity. Hence
\[
\sum_{j\in(\ZZ/n\ZZ)^*} \zeta_n^{mj} = \mu\left(\frac n2\right) = -\mu(n)
\]
and the claim follows mod $2$. If $4|n$, then $\mu(n)=0$ and every element in $\{\zeta_n^{mj}; j\in(\ZZ/n\ZZ)^*\}$ appears with multiplicity $(m,n)$. Hence
\[
2 \mid (m,n) \mid \sum_{j\in(\ZZ/n\ZZ)^*} \zeta_n^{mj},
\]
and we obtain the claimed congruence. \qed

\medskip

\emph{Proof of Lemma \ref{Lem:trace}:}
Let $\Xi$ be the set of eigenvalues of Frob$_q$ on $H^2(\tilde X_{\bar\Q}, \Q_\ell(1))$ with multiplicities. Then
\[
h_q=\sum_{\zeta\in\Xi} \zeta.
\]
Recall that the Galois group Gal$(\bar\Q/\Q(\zeta_{15}))$ acts trivially on $H^2(\tilde X_{\bar\Q}, \Q_\ell(1))$. Since $\Q(\zeta_{15})/\Q$ is Galois of degree eight, we deduce that $\zeta^8=1$ for each $\zeta\in\Xi$.
In particular
\begin{eqnarray}
\label{eq:sum}
\sum_{\zeta\in\Xi} \zeta^8=141.
\end{eqnarray}
In the present situation, $h_q\in\ZZ$,
i.e.~$h_q$ is a sum of traces of elements in $\Xi$.
Hence
we can apply Lemma \ref{Lem:power} to deduce that $h_q$ has the same parity as the sum in \eqref{eq:sum}.
That is, $h_q$ is odd. \qed

\subsection{Evenness of $\sigma$}
\label{ss:sigma}

In this subsection we conclude our preparations for the proof of Theorem \ref{main-theorem}
by proving the following corollary of Proposition \ref{Prop:4}:

\begin{coro}
\label{Cor:2}
The Galois representation $\sigma$ is even.
\end{coro}

Recall that $\sigma$ induces the 4-dimensional Galois representation $\rho$ over $\Q$.
The proof commences by spelling out the characteristic polynomial of $\rho(\Frob_q)$
for some odd prime power $q$:
\[
\mu_q(T) = T^4 - t_q T^3 + u T^2 - q^3 t_q T + q^6
\]
with $u = (t_q^2 - t_{q^2})/2$ using the notation of \eqref{eq:trace}.
By Proposition \ref{Prop:4}, $t_q \equiv t_{q^2} \equiv 0 \mod 4$, so 
$u \equiv 0 \mod 2$.

Now we turn to $\sigma$, the 2-dimensional Galois representation with values in 
$F =
\Q(\sqrt 5)$ inducing $\rho$.
Let $\q$ denote  some power of a prime ideal
in $F$ with odd norm $q\in\Z$.
We write
\[
s_\q = \trace\sigma(\Frob_\q)
\]
and consider the case $s_\q \not \in \Z$. 
Then $s_\q=a+b \omega$
where $\omega$ solves $v^2-v-1=0$ and  $a,b \in \Z, b \neq 0$. 
By Theorem \ref{Thm:CS} we have
\[
t_q = 2a+b.
\]
Since $t_q$ is even by Proposition \ref{Prop:4}, 
so is $b$, and we can write somewhat more intuitively
$s_\q = c + d\sqrt 5$ with $c, d \in \Z, d\neq 0$. 
This already implies that the mod $2$-reduction
of $\sigma$ has traces in $\F_2$, so it will have image in $\SL(2,\F_2)$.

In the new notation, we obtain 
\[
t_q = 2c,
\] 
so by Proposition \ref{Prop:4} the input $c$ is even.
But then, factoring $\mu_q(T)$ into quadratic factors over $F$
corresponding to $\sigma$ and its external conjugate, 
the coefficient of $T^2$
reads
\[
u = (c^2-5d^2) + 2q^3.
\]
Since we have already seen that $u$ and $c$ are even, 
we find that $d$ is even, too. That is, $\sigma$
has even trace at $\Frob_\q$.
The case $s_\q \in \Z$ is essentially the same argument, but even simpler.
\qed

\section{Proof of the Main Theorem}
\label{s:proof}

There is version of the Faltings-Serre method in \cite{Livne} that
allows to compare two-dimensional $2$-adic Galois representations
with even traces.  Here we have to modify this approach slightly since
the two-dimensional Galois representation $\sigma_2$ is only
determined up to conjugation of its coefficients in the quadratic field $F$.  While the
original result involved the notion of non-cubic test sets, in order
to prove Theorem \ref{main-theorem}, we replace this notion by
non-quartic sets:

\begin{defi}
A subset $T$ of a finite dimensional vector space $V$ is
\emph{non-quartic} if every homogeneous polynomial of degree $4$ on $V$
which vanishes on $T$, vanishes in the whole $V$.
\end{defi}

The following lemma is useful to lower the cardinality of the test set
$T$.

\begin{lemma}
Let $V$ be a finite-dimensional vector space.  
Let $T$ be a subset of
$V$ which contains $4$ distinct hyperplanes through the origin and a
point outside them.
Then $T\setminus\{0\}$ is non-quartic.
\label{lemma:non-cubic}
\end{lemma}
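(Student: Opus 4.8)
The plan is to show that a degree-$4$ homogeneous polynomial $f$ on $V$ vanishing on $T\setminus\{0\}$ must be identically zero, by exploiting the four hyperplanes to factor $f$ and then using the extra point to kill the leftover factor. First I would write $H_1,\dots,H_4$ for the four distinct hyperplanes through the origin contained in $T$, and let $\ell_i\in V^\vee$ be a nonzero linear form with $H_i=\ker\ell_i$; since the $H_i$ are distinct, the $\ell_i$ are pairwise non-proportional. The key step is the observation that if a homogeneous polynomial vanishes on a hyperplane $H_i=\ker\ell_i$, then it is divisible by $\ell_i$ in the polynomial ring $k[V]$ (this is just the division algorithm after a linear change of coordinates making $\ell_i$ a coordinate function, together with the fact that $\ell_i$ is irreducible). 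Applying this successively — and using that $k[V]$ is a UFD and the $\ell_i$ are pairwise non-associate irreducibles — I get $\ell_1\ell_2\ell_3\ell_4 \mid f$. Comparing degrees, $f = c\,\ell_1\ell_2\ell_3\ell_4$ for some scalar $c\in k$.

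Now I invoke the point $v_0\in T$ lying outside all four hyperplanes: $\ell_i(v_0)\neq 0$ for every $i$, so $f(v_0)=c\,\ell_1(v_0)\ell_2(v_0)\ell_3(v_0)\ell_4(v_0)$, and since $v_0\in T\setminus\{0\}$ we have $f(v_0)=0$, forcing $c=0$. Hence $f\equiv 0$, which is exactly the assertion that $T\setminus\{0\}$ is non-quartic.

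There is a degenerate case worth dispatching: if $\dim V=1$ then $V$ has no four distinct hyperplanes through the origin, so the hypothesis is vacuous and there is nothing to prove; if $\dim V=0$ the statement is trivial. So one may assume $\dim V\ge 2$, where the argument above applies verbatim. I expect the only genuinely delicate point to be the "vanishing on a hyperplane implies divisibility by its defining linear form" step, and the cleanest way to justify it is the coordinate change reducing to the one-variable division algorithm; everything else (non-proportionality of the $\ell_i$, unique factorization, the degree count) is routine. Note that the statement is characteristic-free, which is what is needed for the application to $2$-adic representations over $\F_2$.
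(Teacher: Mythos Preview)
Your approach mirrors the paper's: peel off the four linear forms one at a time, then use the extra point to kill the surviving scalar. The gap is in the divisibility step, and it bites precisely in the case you flag as the intended application, namely $V=\F_2^5$.

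You assert that if a homogeneous $f$ vanishes on $\ker\ell_i$ then $\ell_i\mid f$ in $k[V]$, justified by division with remainder: $f=\ell_i q+r$ with $r$ independent of the $\ell_i$-coordinate, so $r$ vanishes on all of $k^{n-1}$, hence $r=0$. That last implication fails over finite fields: over $\F_2$ the homogeneous quartic $r=x_2^3x_3+x_2x_3^3=x_2x_3(x_2+x_3)^2$ vanishes identically as a function on $\F_2^{n-1}$ without being the zero polynomial. So $\ell_i\nmid f$ in general, the UFD step collapses, and your conclusion ``$f\equiv 0$ as a polynomial'' is too strong to be true.

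The paper runs the same outline but tracks the weaker (and sufficient) statement that each remainder \emph{vanishes as a function on all of $V$}, never that it is zero in $k[V]$. After the coordinate change $L_1=x_1$, the remainder $P_2=P_2(x_2,\dots,x_n)$ vanishes on $\{x_1=0\}$ and, being independent of $x_1$, therefore vanishes on all of $V$; one then iterates with $L_1Q$ in place of $P$, accumulating a junk term $\tilde P$ that vanishes on $V$, until $P=A\cdot L_1L_2L_3L_4+\tilde P$. Evaluating at the extra point forces $A=0$, so $P=\tilde P$ vanishes on $V$, which is exactly what ``non-quartic'' demands. The fix to your write-up is to replace ``$\ell_i\mid f$ in $k[V]$'' by ``$f\equiv \ell_i\, g$ modulo polynomials vanishing identically on $V$'' and to aim for ``$f$ vanishes on $V$'' rather than ``$f=0$ in $k[V]$''.
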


\begin{proof}
Let $L_1, \hdots, L_4$ denote  linear homogeneous
polynomials giving equations of the four hyperplanes.
Let
$P(x_1,\ldots,x_n)$ be a homogeneous quartic polynomial vanishing on all points of
$T$. Division with remainder gives a representation
\[
P(x_1,\ldots,x_n) = L_1 Q(x_1,x_2,\ldots,x_n) + P_2(x_1,\ldots,x_n)
\]
with $L_1\nmid P_2$.
The crucial property here is the following:
Since $T$ contains the hyperplane $\{L_1=0\}$ and $P$ vanishes on this
hyperplane, $P_2$ vanishes on all of $V$.
(To see this, apply a linear transformation so that $L_1=x_1$;
then $P_2=P_2(x_2,\hdots,x_n)$,
and $P_2$ vanishes on the hyperplane $\{x_1=0\}$ if and only if it vanishes on $V$).
Since the hyperplanes are linearly independent, 
we can apply the same argument to
the other three hyperplanes (starting with $L_1Q$ instead of $P$). We obtain
\[
P(x_1,\ldots,x_n) = A\cdot L_1L_2L_3L_4 + \tilde{P}(x_1,\ldots,x_n),
\]
where $A$ is a field constant and $\tilde{P}$ vanishes identically on $V$. Since
$T$ contains a point outside the union of the four hyperplanes, $A$
must be zero.

But then $P=\tilde P$ vanishes on all of $V$.  Since this argument
applies to any homogeneous quartic polynomial $P$, the test set
$T\setminus\{0\}$ is non-quartic.
\end{proof}

\begin{remark}
Note that Lemma \ref{lemma:non-cubic} does explicitly not require the hyperplanes to be linearly independent.
It is immediate from the proof of Lemma \ref{lemma:non-cubic}
that the same argument works for test sets for homogeneous polynomials of degree $n$ 
if we find $n$ distinct hyperplanes through the origin and a point outside them.
\end{remark}

We want to compare the two Galois representations, $\sigma_2$ and
$\sigma_{\ff,2}$.  It is crucial that in the present situation we know
that the external Galois conjugate representation exists: this follows in the
geometric example by construction, and in the modular example we can
consider the $2$-adic representation attached to the conjugate Hilbert
modular forms $\tau(\ff)$, where $\tau$ is a generator of the group
$\Gal(F/\Q)$. For any given $\ell$-adic Galois representation $\rho$
with field of coefficients $F$ (i.e, the field generated by the traces
of Frobenius elements is $F$) we will denote by $\rho'$ the external conjugate
representation (if we know that such a representation exists). Since
for the Calabi-Yau threefold $\tilde X$, we can only compute the
traces of the $4$-dimensional Galois representation $\sigma_2 \oplus
\sigma'_2$ of $\Gal(\bar{\Q}/F)$ (or actually of $\Gal(\bar\Q/\Q)$),
we will need the following generalization of Theorem 4.3 in
\cite{Livne} about Galois representations whose residual images are
$2$-groups:

%

\begin{thm}
\label{Livne}
 Let $K$ be a global field, $S$ a finite set of primes of $K$ and $E$
 the  unramified quadratic extension of $\Q_2$. Denote by $K_S$ the compositum of all
 quadratic extensions of $K$ unramified outside $S$ and by $\bfP_2$
 the maximal prime ideal of $\Om := \Om_E$. Suppose
 $\rho_1,\rho_2:\Gal(\overline \QQ/K)\to\GL_2(E)$ are continuous
 representations, unramified outside $S$, and with field of coefficients $F$, and assume also that their external Galois
  conjugates exist. We suppose that the following conditions are satisfied:
\begin{enumerate}[1.]
\item $\trace(\rho_1)\equiv\trace(\rho_2)\equiv 0 \pmod {\bfP_2}$ and
  $\det(\rho_1)\equiv\det(\rho_2) \equiv 1 \pmod {\bfP_2}$.
\item There exists a set $T$ of primes of $K$, disjoint from $S$, for which
\begin{enumerate}[(i)]
\item The image of the set $\{\Frob_t\}_{t \in T}$ in $\Gal(K_S/K)\backslash
  \{0\}$ is non-quartic.
\item $\trace(\rho_1(\Frob_t)) + \trace(\rho'_1(\Frob_t))
  =\trace(\rho_2(\Frob_t)) + \trace(\rho'_2(\Frob_t)) $ and
 \break $\det(\rho_1(\Frob_t))=\det(\rho_2(\Frob_t))$ for all $t\in T$.
\end{enumerate}
\end{enumerate}
Then $\rho_1 \oplus \rho'_1$ and $\rho_2 \oplus \rho'_2$ have isomorphic semi-simplifications.
\end{thm}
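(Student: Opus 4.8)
The plan is to adapt Livné's Faltings--Serre argument to the induced four-dimensional setting, working with the "sum" representation $\rho_i \oplus \rho'_i$ but keeping track of the fact that the residual image of each $\rho_i$ lies in a pro-$2$ group, so that the deformation-theoretic counting argument goes through with degree-four test polynomials replacing the cubic ones. First I would set $\bar\rho_i$ to be the semisimplification of the reduction of $\rho_i$ modulo $\bfP_2$. By hypothesis (1) the traces are $\equiv 0$ and the determinants agree mod $\bfP_2$; since the residual images are $2$-groups (this is where Proposition~\ref{prop:mod} and Proposition~\ref{Prop:4} feed in, via the calling context), $\bar\rho_1$ and $\bar\rho_2$ factor through $2$-groups and, having the same character, are in fact isomorphic as residual representations. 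After conjugating we may assume $\bar\rho_1 = \bar\rho_2 =: \bar\rho$, and likewise $\bar{\rho'_1}=\bar{\rho'_2}$.

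Next I would form the direct sum $\tau_i := \rho_i \oplus \rho'_i$ and consider the lattice $M := \Hom_{\Om}(\text{space of }\tau_1, \text{space of }\tau_2)$, or rather the standard Faltings--Serre construction: inside the group algebra generated by the image of $(\tau_1,\tau_2)$ acting on the sum, one looks at the $\Om$-submodule spanned by $\tau_1(g)-\tau_2(g)$ and shows that if the two representations are not isomorphic then there is a smallest such module, and the associated graded of the $\bfP_2$-adic filtration yields a nontrivial continuous homomorphism from $\Gal(\bar\Q/K)$ to the additive group of a nonzero $\F_2$-vector space, i.e.\ an elementary abelian $2$-extension, hence a subextension of $K_S/K$. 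The key new point is to control \emph{which} function on $\Gal(K_S/K)$ records the obstruction: because we only see the combined trace $\trace\tau_i = \trace\rho_i + \trace\rho'_i$ and not $\trace\rho_i$ individually, the obstruction is captured by a polynomial \emph{of degree up to four} in the coordinates of $\Gal(K_S/K)$ — one factor of $2$ from passing from trace of a $4$-dimensional representation versus its residual value, but more importantly, since $A_5\cong\SL_2(\F_4)$ is non-solvable and the relevant residual image is a priori only known to be a $2$-group after the earlier sections, the length of the filtration forces the vanishing locus to be cut out by forms of degree $\le 4$ rather than $\le 3$. Concretely: if $\trace\tau_1 \equiv \trace\tau_2$ and $\det\rho_1=\det\rho_2$ on all of $T$, and $\{\Frob_t : t\in T\}$ is non-quartic in $\Gal(K_S/K)\setminus\{0\}$, then every homogeneous quartic vanishing on the $\Frob_t$ vanishes identically, which by the filtration argument forces the obstruction module to be zero, i.e.\ $\tau_1$ and $\tau_2$ have isomorphic semisimplifications.

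The main obstacle I expect is the bookkeeping that pins down exactly the degree-four bound: one must show that the universal deformation ring of $\bar\tau = \bar\rho\oplus\bar{\rho'}$, or rather the relevant quotient that parametrizes pairs $(\tau_1,\tau_2)$ with equal combined traces and equal determinants, has the property that the difference $\trace\tau_1 - \trace\tau_2$ lies in the fourth power of the maximal ideal once it vanishes on a non-quartic set. For the classical rank-two Livné result this is a degree-three statement coming from the fact that $\SL_2(\F_2)$-representations are rigid enough; here the induced structure and the coefficient field $F$ with $2$ inert mean one gains one more step in the filtration, and carefully identifying the Lie-algebra-valued cocycles and their Bracket interactions is the delicate part. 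Once that bound is established, Lemma~\ref{lemma:non-cubic} (in its degree-four form) supplies the test set, and the rest is the standard descent: isomorphic semisimplifications of $\tau_1$ and $\tau_2$, together with the fact that $\Ind_F^\Q\sigma$ is irreducible, forces $\rho_1\oplus\rho'_1 \cong \rho_2\oplus\rho'_2$ as desired.
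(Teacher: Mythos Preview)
Your proposal has the right shape---you correctly identify that the non-quartic condition replaces Livn\'e's non-cubic one, that only the combined trace $\trace\rho_i+\trace\rho'_i$ is available, and that the degree bump comes from the coefficient ring being rank $2$ over $\Z_2$. But the deformation-theoretic language (universal deformation rings, Lie-algebra cocycles, bracket interactions) is a red herring; the paper's proof is a direct and elementary modification of Livn\'e's ring-theoretic argument, and your sketch does not supply the concrete mechanism that actually pins down the degree-four bound.

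Here is what is missing. As in Livn\'e, one passes to a finitely generated pro-$2$ group $G$, sets $\rho=(\rho_1,\rho_2):G\to M_2\times M_2$, and looks at the linear span $M$ of $\rho(G)$. The crucial change is to take $M$ as the $\Z_2$-span rather than the $\Om$-span: since $\Om$ has rank $2$ over $\Z_2$, this $M$ is free of $\Z_2$-rank at most $16$, so $R=M/2M$ is an $\F_2$-algebra of dimension at most $16$. One then shows, exactly as in Livn\'e, that $R$ is a local Artin $\F_2$-algebra with maximal ideal $P=\{r:r^2=0\}$. The nilpotency step becomes $P^5=0$: if $x_1,\dots,x_5\in P$ had nonzero product, the map $\F_2[T_1,\dots,T_5]/(T_i^2)\to R$ sending $T_i\mapsto x_i$ would be injective, but the domain has dimension $32>16\ge\dim_{\F_2}R$. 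This clean dimension count---not a filtration or cohomology computation---is what forces quartic rather than cubic test sets. Finally, Livn\'e's $\Om$-linear map $\alpha(a,b)=\trace a-\trace b$ is replaced by the $\Z_2$-linear map $\beta(a,b)=\trace a+(\trace a)^\phi-\trace b-(\trace b)^\phi$, where $\phi$ generates $\Gal(E/\Q_2)$; it is $\beta$, not $\alpha$, that hypothesis (2)(ii) kills on $\Sigma$, and Nakayama plus Brauer--Nesbitt finishes.

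One further correction: the fact that the residual images are $2$-groups is not imported from Propositions~\ref{prop:mod} and~\ref{Prop:4}; it follows inside the proof from hypothesis~(1) (even trace), exactly as in Livn\'e. Those propositions are used only to verify hypothesis~(1) in the application.
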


\begin{proof} This is just a slight generalization of Proposition 4.7 and Theorem
4.3  in \cite{Livne}, we reproduce most of the arguments for the
reader convenience, adapted to our situation. \\
Observe that due to assumption (1), and the fact that similar
conditions hold also for $\rho'_1$ and $\rho'_2$, the image of any
of the representations $\rho_1$, $\rho'_1$, $\rho_2$ and $\rho'_2$
is a pro-$2$-group. Let $G$ be the image of the product of the four
representations. Then, $G$ is a topologically finitely generated
pro-$2$-group and the four representations can be thought of (and we
will do so for the rest of this proof) as representations of $G$,
each being obtained from a suitable projection.\\
Set $M_2$ to be the algebra of $2$ by $2$ matrices with coefficients
in $\Om$. 
For $g \in G$, let $\rho : G \rightarrow M_2 \times M_2$ be the map:
$$ \rho(g) = (\rho_1(g), \rho_2(g) ) $$
Keeping the notation in \cite{Livne}, we call $\Sigma$ the subset of
$G$ corresponding to $T$ (the projection of the elements in
$\{\Frob_t\}_{t \in T}$ to
$G$).\\
Let $M$ be the $\Z_2$-linear span of $\rho(G)$. Since $\Om$ has rank
$2$ over $\Z_2$, $M$ is a sub-algebra with unity of $M_2 \times M_2$
 which is free of rank at most $16$ as a module over $\Z_2$.\\
 We consider $R = M / 2 M$, which is an $\F_2$-vector space of
 dimension at most $16$. Denote the image of $g \in G$ in $R$ by
 $\bar{g}$. Set $\Gamma = \{ \bar{g} \; | \; g \in G \} $. Then $\Gamma
 \subseteq R^{\times}$. 
 $R$ is spanned as $\F_2$-vector space by $\Gamma$ and $\dim_{\F_2}
 R \leq 16$.\\
 We will show the following:

\smallskip

\textbf{Assertion:} $R$ is spanned over $\F_2$ by $\{ \bar{\sigma}
\; |  \; \sigma \in \Sigma \cup \{1 \} \}$.

\smallskip

In order to do so, following \cite{Livne},  we have first to prove
that
\begin{eqnarray}
 \label{eq:sigma}
\sigma \in \Sigma \Rightarrow \bar{\sigma}^2 = \bar{1} \text{ in }
\Gamma.
\end{eqnarray}

Let $\sigma \in \Sigma$. Set $d= \det \rho_1(\sigma) = \det
\rho_2(\sigma)$  (the last equality is due to assumption (2)(ii))
and $ t_1 = \trace \rho_1(\sigma)$, $ t_2
 = \trace \rho_2(\sigma)$.\\
 By the Cayley-Hamilton theorem (for $2$ by $2$ matrices) we have:
$$\rho(\sigma)^2 = ( t_1 \rho_1(g), t_2 \rho_2(g) ) -d(I,I)$$
in $M_2 \times M_2$, where $I$ is the $2$ by $2$ identity matrix.\\
Reducing modulo $\bfP_2$ this equality, since assumption (1) gives
$t_1 \equiv t_2 \equiv 0 \pmod{\bfP_2}$ and $d \equiv 1
\pmod{\bfP_2}$ we obtain $\bar{\sigma}^2 = \bar{1}$ in $\Gamma$, and
this is formula (\ref{eq:sigma}).\\
If we call $G^*$ the Frattini subgroup of $G$, and $\Gamma^*$ the
one of $\Gamma$, since $G/G^* \rightarrow \Gamma / \Gamma^*$ is
surjective, the image of $\Sigma$ in $\Gamma / \Gamma^*$ is
non-quadratic (we use assumption (2)(i) and the fact that
non-quartic easily implies non-quadratic). Then, by Lemma 4.5 in
\cite{Livne} we see that formula (\ref{eq:sigma}) implies that
$\Gamma^* = 1$ and hence $\Gamma \cong (\Z / 2 \Z)^r$, for some
$r$.\\
In particular, $\Gamma$ is commutative, and since $\Gamma$ spans
$R$, $R$ is a commutative ring.\\
Let $r \in R$ be any element and write $r = \sum_{\gamma \in \Gamma}
k_\gamma \gamma$, with $k_\gamma \in \F_2$.\\
Then $r^2 = (\sum k_\gamma^2) \cdot \bar{1}$. Hence $r^2= 0$ or else
$r$ is invertible.\\
It follows that $R$ is a local artinian algebra over $\F_2$ with
maximal ideal
$$ P = \{  r\in R \;  |  \;      r^2 = 0 \}  $$
and $R/P = \F_2$. Let us now show that
 $P^5=0$. The proof
of this fact goes as the one given in \cite{Livne}, except that we
consider now products of $5$ elements $x_1, ..., x_5$ in $P$
(instead of $4$ elements): assuming that their product is non-zero
we derive a contradiction, as in \cite{Livne}, by considering the
$\F_2$-algebra
$$ R_0 = \F_2[T_1, ..., T_5]/(T_1^2, ... , T_5^2) $$ and the injective map of
$\F_2$-algebras from $R_0$ to $R$ that sends $T_i$ to $x_i$. The
contradiction follows from the inequalities: $\dim_{\F_2} R_0 = 32 >
16 \geq \dim_{\F_2} R$.\\
Using the fact that $P^5 = 0$, the rest of the proof of the
assertion follows as in \cite{Livne}, changing ``cubic polynomial"
by ``quartic polynomial", and ``non-cubic" by ``non-quartic". \\
The assertion being proved, we conclude as in \cite{Livne} from
Nakayama's lemma that the $\Z_2$-span of $\{ \rho(\sigma) \; | \;
\sigma \in \Sigma \cup \{1 \} \}$ is all of $M$  (recall that $M$ is
the
linear $\Z_2$-span of $\rho(G)$).\\
Here comes the only place where equality of the traces over $\Sigma$
is used in Livn\'e's proof (cf. \cite{Livne}): he considers the map:
$\alpha: M \rightarrow \Om$ defined by $\alpha(a,b) = \trace a -
\trace b$. Since this map is $\Om$-linear and $\alpha(I,I)=0$,
assuming that $\alpha(\rho(\sigma))=0$ for every $\sigma \in \Sigma$
Livn\'e concludes that $\alpha = 0$, i.e., equality of the traces of
$\rho_1$ and $\rho_2$. We can argue in the same way but using the
map: $ \beta: M \rightarrow \Z_2$ given by
$$\beta (a,b) = \trace a + (\trace a)^\phi - \trace b - (\trace
b)^\phi $$ where $\phi$ is the order two element in $\Gal(E/ \Q_2)$.
Observe that when applied to numbers in $F$, $\phi$ agrees with the
order two element $\tau$ in $\Gal(F/ \Q)$.\\
\\ Observe that if $a = \rho_1(g)$, then $(\trace a)^\phi = (\trace
\rho_1(g))^\phi = \trace (\rho'_1 (g))$, and similarly for $b =
\rho_2(g)$. This map is  $\Z_2$-linear and satisfies $\beta(I,I)=0$.
Then, assumption (2) (ii) in the theorem implies that for elements
of $G$ in $\Sigma$ (recall that these elements correspond to
Frobenius elements for primes in $T$) the map $\beta$ vanishes, thus
we conclude (as Livn\'e does for $\alpha$) that $\beta = 0$, which is
the equality of the traces of two $4$-dimensional $2$-adic Galois
representations. Applying Brauer-Nesbitt we conclude that these
$4$-dimensional Galois representations have isomorphic
semi-simplifications.
\end{proof}

\subsection{Proof of Theorem \ref{main-theorem}}

We want to apply Theorem \ref{Livne} to the $2$-adic Galois
representations $\sigma_2, \sigma_{\ff,2}$ over $F=K$.  In Proposition
\ref{Prop:4} we proved that $\sigma_2$ has even traces by geometric
considerations, and in Section \ref{s:image} we proved that the same
is true for the representation $\sigma_{\ff,2}$.
Recall from the introduction that both residual representations $\bar\sigma_2, \bar\sigma_{\ff,2}$
have  image in $\SL(\F_4)$.
In particular their determinants are congruent modulo $2$.
Thus the first
hypothesis of Theorem \ref{Livne} is satisfied. The field $K_S$ of
Theorem \ref{Livne} is the compositum of the thirty one field
extensions computed in the proof of Proposition \ref{prop:mod}. The
set of primes in $K$ in the set

\begin{multline*}
T=\{\langle 61,26-\sqrt{5}\rangle,
\langle 59,\sqrt{5}+8\rangle,
\langle 149,\sqrt{5}-68\rangle,
\langle 211,\sqrt{5}+65\rangle,
\langle 101, \sqrt{5}-45 \rangle,\\
\langle 19, \sqrt{5} +9\rangle,
\langle 229, \sqrt{5} - 66 \rangle,
\langle 11, \sqrt{5} - 4 \rangle,
\langle 11, \sqrt{5} + 4 \rangle,
\langle 109, \sqrt{5} - 21 \rangle,\\
\langle 19, \sqrt{5} - 9 \rangle,
\langle 701, \sqrt{5} - 53 \rangle,
\langle 211, \sqrt{5} - 65 \rangle,
\langle 29, \sqrt{5} - 11 \rangle,
\langle 59, \sqrt{5} - 8 \rangle,\\
\langle 181, \sqrt{5} - 27 \rangle,
\langle 239, \sqrt{5} + 31 \rangle,
\langle 31, \sqrt{5} + 6 \rangle,
\langle 79, \sqrt{5} - 20 \rangle,
\langle 71, \sqrt{5} - 17 \rangle,
13, \\
\langle 401, \sqrt{5} - 178 \rangle,
\langle 449, \sqrt{5} - 118 \rangle,
\langle 241, \sqrt{5} - 103 \rangle,
\langle 89, \sqrt{5} - 19 \rangle,
7,\\
\langle 79, \sqrt{5} + 20 \rangle,
\langle 239, \sqrt{5} - 31 \rangle,
\langle 41, \sqrt{5} - 13 \rangle,
\langle 31, \sqrt{5} - 6 \rangle,
\langle 71, \sqrt{5} + 17 \rangle\},
\end{multline*}
saturate the set $\Gal(K_S/K)\backslash \{0\}$. They are ordered in
such a way that the primes (under class field theory) correspond to
the extensions listed in Table \ref{table:extensions} in the same
order.

Thus $T$ saturates the set $\Gal(K_S/K)\backslash \{0\}$, 
but we can still eliminate
some prime ideals of big norm by replacing $T$ by a non-quartic test set by Lemma \ref{lemma:non-cubic}. 
To do so, we fix a standard basis of $\F_2^5$ corresponding to the following quadratic extensions of $F$:
\[
x^2-3\frac{\sqrt{5}-5}2, x^2-3, x^2+\frac{\sqrt{5}+5}2,x^2-3\sqrt{5},x^2-2.
\]
In this basis,
the elements corresponding to the primes above $701$,
$449$ and $401$ correspond to the elements $(1,1,0,0,1)$,
$(0,1,1,1,0)$ and $(1,1,0,1,0)$ respectively in $\F_2^5$.

We claim that the set $T'$ obtained from $T$ by removing these three
elements is a non-quartic set in $\F_2^5$. 
To see this, we use
Lemma \ref{lemma:non-cubic} with the fact that $T'\cup\{0\}$ contains
the four  hyperplanes
\[
\begin{cases}
x_2=0\\
x_4+x_5=0\\
x_1+x_3=0\\
x_1+x_2+x_3+x_4+x_5=0
\end{cases}
\]
and the extra point $(0,1,1,0,1)$ (which corresponds to one of the primes above $59$).


\smallskip

It was checked in
\cite{consani} that
the characteristic polynomials of the $4$-dimensional Galois
representations $\sigma_2 \oplus \sigma'_2$ and $\sigma_{\ff,2}
\oplus \sigma_{\tau(\ff),2}$
agree at the primes of $K$ above rational primes smaller than $100$.
For the remaining set of primes above the set of rational primes
\[
\{101, 109, 149, 181, 211, 229, 239, 241\},
\]
the same was checked by us, see Appendix \ref{points} and the table at \cite{table}.
By Theorem \ref{Livne} we conclude that the two $4$-dimensional
Galois representations have indeed isomorphic semi-simplifications.
In particular, any irreducible component of one of them must be
isomorphic to some irreducible component of the other, thus since we
know that $\sigma_{\ff,2}$ and  $\sigma_{\tau(\ff),2}$ are
irreducible, one of them must be isomorphic to $\sigma_2$ (and the
other to $\sigma'_2$). This proves Theorem \ref{main-theorem}. \qed

\appendix

\section{Appendix: Counting points}
\label{points}

In this appendix we indicate how to count the number of points on the
Consani-Scholten quintic $\tilde X$ over finite fields.  In
particular, we give the traces of the Galois representations
$\sigma_\lambda, \sigma_\lambda'$ at the primes $p>100$ needed to
prove Theorem \ref{main-theorem}.  For most part, we follow the
approach from \cite{consani}.

Recall that $\tilde X$ is given affinely by the symmetric equation in the Chebyshev polynomial $P_5(y,z)$:
\[
X= \{P_5(x_1,x_2)=P_5(x_4,x_5)\}\subset\mathbb{A}^4.
\]
Thus we can count the number of points of $\tilde X$ over some finite field $\F_q$ as follows:
\begin{enumerate}
\item
Compute the affine number of points $\#X(\F_q)$.
\item
For the projective closure $\bar X\subset\PP^4$,
let $S\subset \PP^3$ be $\bar X- X$.
Compute $\#S(\F_q)$.
\item
Compute the contribution from the exceptional divisors in the resolution $\tilde X\to \bar X$.
\end{enumerate}

For (1), we can proceed by counting how often each value in $\F_q$ is attained by the Chebyshev polynomial $P_5(y,z)$ over $\mathbb{A}^2$.
Due to the symmetry, $\# X(\F_q)$ is the sum of the squares of these numbers.

For (2), 
note that $S$ is the Fermat quintic surface given by the model
\[
S=\{x_1^5+x_2^5=x_4^5+x_5^5\}\subset\PP^3.
\]
In \cite{consani} it was pointed out that for $q\not\equiv 1\mod 5$ one has $\# S(\F_q)=1+q+q^2$.
Meanwhile for $q\equiv 1\mod 5$, we could either use the zeta function of $S$ and its description in terms of Jacobi symbols due to A.~Weil or proceed along the same lines as above, i.e.~count points over $\mathbb A^4$ using symmetry
and then take into account that we are actually working over $\PP^3$ (substract $1$ and divide by $q-1$).
However either approach would a priori impose the same complexity as for computing $\#X(\F_q)$.
Luckily counting $\{(y,z)\in\mathbb{A}_{\F_q}^2; y^5+z^5=a\}$ can be improved by noting that scalar multiplication acts as multiplication by fifth powers on the values.
Hence $\#\{(y,z)\in\mathbb{A}_{\F_q}^2; y^5+z^5=0\}=(q-1)\cdot \#\{[y,z]\in\PP_{\F_q}^1; y^5+z^5=0\}+1$ and for any $a\neq 0$ with  $O(a)=a\cdot(\F_q^*)^5$ denoting the $a$-orbit under multiplication by fifth powers in $\F_q^*$:
\[
\#\{(y,z)\in\mathbb{A}_{\F_q}^2; y^5+z^5=a\} = 5\sum_{o\in O(a)} \#\{[y,z]\in\PP_{\F_q}^1; y^5+z^5=o\}.
\]
(Strictly speaking the sets on the right are ambiguous, but scalar multiples end up in the same orbit, so the contribution does not depend on the chosen representative $[y,z]\in\PP^1$.)
Essentially this simplification reduces the algorithm's running time from $q^2$ to $q$ compared with computing $\#X(\F_q)$.

Finally for (3) we recall from 3.3 that the 120 nodes are always defined over the extension of $\F_q$ containing the $15$th roots of unity, and that their rulings are always defined over the same field.
So if a node is defined over $\F_q$, then its exceptional divisor contributes $q^2+2q$ additional points.
Thus we find $\#\tilde X(\F_q)$.
This allows us to compute the trace 
\[
t_q=\text{trace Frob}_q^*(H^3(\tilde X_{\bar\Q},\Q_\ell))
\]
 through the Lefschetz fixed point formula \eqref{eq:trace}.
Here we do not need to know $h_q$ in advance since it is determined (for $q>20$ and $b_3(\tilde X)=4$) by the inequality
\[
\mid t_q\mid \leq 4q^{3/2}.
\]
We obtain the characteristic polynomial of Frob$_q^*$ on $H^3(\tilde X_{\bar\Q}, \Q_\ell)$:
\[
\mu_q(T) = T^4 - t_q T^3 + \frac 12 (t_q^2-t_{q^2}) T^2 - t_qq^3T+q^6.
\]
In the present situation, we know that $L_q(T)$ will always split over $\Q(\sqrt{5})$:
\[
\mu_q(T) = (T^2 - \alpha_q T + q^3)(T^2 -\alpha_q^\sigma T + q^3), \;\;\; \alpha_q\in\Q(\sqrt{5}).
\] 
The traces $\alpha_p, \alpha_p^\sigma$ appear as eigenvalues of the
Hilbert modular form.  In the following table, we collect one of the
traces together with the numbers of points of $X$ and $S$ over $\F_p$
and $\F_{p^2}$ for all primes $p>100$ needed to prove Theorem
\ref{main-theorem}.

\begin{table}[ht!]
\[
{\small 
\begin{array}{c|r|r|r|r|c}
p & \#X(\F_p) & \# X(\F_{p^2})\;\;\;\;\, & \# S(\F_p) & \# S(\F_{p^2}) \;& \alpha_p\\
\hline
101& 1222681& 1063601210405 & 14655& 104338955 & -598-476  \sqrt{5}\\
109& 1338593& 1679922873825 & 11991& 141787855 & 890+468  \sqrt{5}\\

149& 3395857& 10952392903505 &  22351& 494061055 & 150-344  \sqrt{5}\\
181& 6562145& 35183310464645 & 39455& 1074841355 & -898-288  \sqrt{5}\\

211& 10261235& 88285583898085 & 49205& 1984280555 & -1228-1616  \sqrt{5}\\

229& 12214593& 144270849112465 & 52671& 2752837855 & -210+940  \sqrt{5}\\ 
239& 13872967& 186440164574105 & 57361& 3265836055 & 3240+944  \sqrt{5}\\
241 & 15137985 & 195998061709305 & 65255& 3375066455 & -4938+172  \sqrt{5}\\
\end{array}}
\]
\end{table}

\section{Appendix: Sturm Bound (by Jos\'e Burgos Gil and Ariel Pacetti)}

\label{appendix:sturm}
The aim of this appendix is to show how a Sturm bound can be obtained
for the modular form of level $6\sqrt{5}$ and parallel weight $10$. We
expect to extend the result to any real quadratic fields in
a future work. Following the previous notation, $F$ will denote the
real quadratic field $\Q(\sqrt{5})$.


\subsection{Desingularization and a Hecke bound over $\CC$}
\label{sec:desing-sturm-bound}

Let $H(3)$ be the Hilbert modular surface obtained as the quotient of
the product of two copies of the Poincare upper half plane modulo the
action of the congruence group
\[
\Gamma(3):= \left\{
\left(\begin{array}{cc}
\alpha & \beta\\
\gamma & \delta \end{array} \right) \in \SL_2(\Om_F) \; : \; \alpha \equiv \delta \equiv 1 \pmod{3}, \beta, \gamma \in 3\Om_F\right \}.
\]

The group $\Gamma(3)$ has no fixed elliptic points for this action
(see \cite{VDG} page 109); it has $10$ non-equivalent cusps. Let
$\overline H(3)$ be the minimal 
compactification of $H(3)$ obtained by adding one point for each cusp.
The
surface $\overline H(3)$ is singular at all such points. Denoting by
$\widetilde{H}(3)$ the 
minimal desingularization of $\overline H(3)$, we get that the diagram of the
desingularization of $\overline H(3)$ at any cusp is the following
(see \cite{VDG}, page 193):

\begin{figure}[h]
    \begin{center}
        \includegraphics[width=5cm]{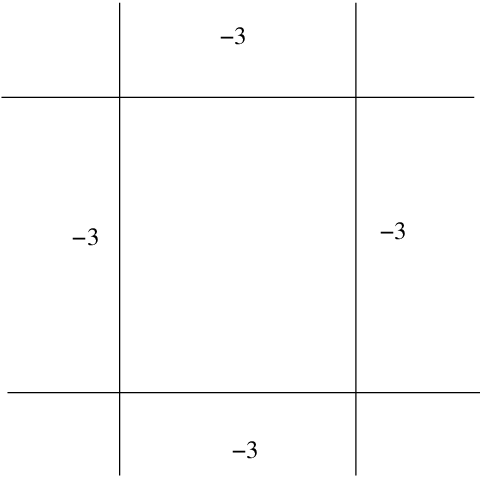}
    \end{center}
\end{figure}

Denote by $c_i$, $1 \le i \le 10$ the different cusps (where $c_1$ is
the cusp at infinity) and denote by $S_i$, $1 \le i \le 10$ the
exceptional divisor at the $i$-th cusp. The surface $\widetilde{H}(3)$ is of
general type (by Theorem 3.4 of \cite{VDG}).

We want a criterion to show that a Hilbert modular form whose Fourier
expansion starts with many zeroes is actually the zero modular
form. This is a generalization of the Sturm bound to Hilbert modular
forms.  To this end we need a nef (numerically eventually free) divisor.
Let $F_1$ be the curve defined in \cite{VDG} page 88.  It has $30$
disjoint connected components made of curves with self-intersection
number $-2$ and it meets each connected component of the
desingularization at the cusps in three points (see \cite{VDG}, page
193).

\begin{lemma} 
\label{lemma:intnum}
The intersection numbers between the curves $S_{i}$ and $F_{1}$ are:
\begin{itemize}
\item $S_i \cdot S_j = \begin{cases} 
\ 0 & \text{ if } i \neq j,\\
-4 & \text{ if } i = j.\end{cases}$
\item $F_1 \cdot F_1 = -60$.
\item $S_i \cdot F_1 = 12$.
\end{itemize}
\end{lemma}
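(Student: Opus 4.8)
The statement records three intersection-number computations on the smooth surface $\widetilde{H}(3)$: the self-intersections $S_i\cdot S_i=-4$ and $S_i\cdot S_j=0$, the self-intersection $F_1\cdot F_1=-60$, and the mixed number $S_i\cdot F_1=12$. My plan is to read all three off from the explicit local description of the desingularization at the cusps and the explicit description of the curve $F_1$, both taken from van der Geer's book \cite{VDG}. The guiding principle is that each of these curves is a union of rational curves with known configuration, so every intersection number can in principle be computed combinatorially once we know (a) the dual graph of the exceptional divisor over each cusp, and (b) how $F_1$ crosses that configuration.

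\emph{The three computations in order.} First, the numbers $S_i\cdot S_j$ for $i\ne j$: distinct cusps have disjoint neighborhoods, hence their exceptional divisors are disjoint and $S_i\cdot S_j=0$. Second, $S_i\cdot S_i$: here $S_i$ is the \emph{total} exceptional divisor over the $i$-th cusp, i.e.\ the sum of the irreducible components appearing in the resolution cycle pictured on page 193 of \cite{VDG}. Writing $S_i=\sum_k E_k^{(i)}$ as a cycle of rational curves, one has $S_i\cdot S_i=\sum_k (E_k^{(i)})^2 + 2\sum_{k<l} E_k^{(i)}\cdot E_l^{(i)}$; the self-intersections $(E_k^{(i)})^2$ and the adjacency pattern are exactly the data of the resolution diagram, and summing them yields $-4$ (this is the standard fact that a cusp of the Hilbert modular surface for $\Gamma(3)$ over $F=\Q(\sqrt5)$ resolves to a cycle whose total self-intersection is $-4$; van der Geer computes the relevant continued-fraction/resolution data). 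Third, $F_1\cdot F_1=-60$ and $S_i\cdot F_1=12$: the curve $F_1$ has $30$ disjoint connected components, each a configuration of $(-2)$-curves, and the total self-intersection is the sum over components of their self-intersections; van der Geer's description on page 88 (together with the compactification data on page 193) gives each component self-intersection summing to $-60$. For the mixed number, $F_1$ meets each connected component of the cusp resolution $S_i$ in exactly three points transversally (stated explicitly on page 193 of \cite{VDG}), and since there are $30$ components of $F_1$ distributed among the $10$ cusps — $3$ components hitting each cusp configuration — each contributing $3$ transversal intersection points, one gets $S_i\cdot F_1 = 3\cdot 3 = 9$... so I will need to be careful about how the $30$ components distribute and with what multiplicities. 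The correct bookkeeping is: summing $S_i\cdot F_1$ over all $i$ must equal the total intersection of $F_1$ with the full exceptional locus, namely $30$ components $\times\,3$ points $=90$, distributed equally over $10$ cusps gives $S_i\cdot F_1=9$ per cusp if "three points" is read per component and each cusp meets $3$ components; to land on $12$ one reads van der Geer's statement as $F_1$ meeting each cusp configuration in $12$ points (e.g.\ $4$ components each in $3$ points, or the intersection points counted with the pull-back multiplicities from blowing up). I would resolve this by going to the precise lemma in \cite{VDG}, p.\ 193, rather than guessing.

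\emph{The main obstacle.} The hard part is purely bookkeeping: correctly transcribing the resolution cycle at a cusp of $\Gamma(3)$ for $\Q(\sqrt5)$ (the continued-fraction expansion governing the self-intersections of the chain of rational curves, and the length of the cycle) and correctly counting how the $30$ components of $F_1$ sit relative to the $10$ cusps after desingularization. None of the individual steps is conceptually deep — each is an intersection-theory computation on a configuration of rational curves — but the constants $-4$, $-60$, $12$ depend on getting the combinatorics and the multiplicities from blow-ups exactly right, and a sign or a factor error propagates. So my proof would consist of: (i) cite the resolution diagram from \cite{VDG} and compute $S_i\cdot S_i$ by summing the cycle; (ii) cite disjointness of cusp neighborhoods for $S_i\cdot S_j=0$; (iii) cite the description of $F_1$ and its $30$ components to get $F_1\cdot F_1=-60$ as a sum of $(-2)$'s; (iv) cite the incidence of $F_1$ with each cusp resolution to get $S_i\cdot F_1=12$, checking consistency via the global count $\sum_i S_i\cdot F_1 = 120$.
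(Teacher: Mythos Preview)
The paper gives no proof of this lemma at all --- it is stated bare, with the intersection data implicitly read off from the description on pages 88 and 193 of van der Geer's book \cite{VDG}. So your outline already goes further than the paper does, and your strategy (read everything off the resolution diagram and the explicit description of $F_1$) is exactly the right one and is in spirit what the paper is silently invoking.

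That said, your proposal has a genuine gap at the computation of $S_i\cdot F_1$, which you yourself flag. Your attempted count $3\times 3=9$ comes from misreading both the number of irreducible components of $S_i$ and the meaning of the ``three points''. The correct bookkeeping is as follows. The resolution cycle over each cusp has \emph{four} irreducible components $L_0,L_1,L_2,L_3$ (this is visible later in the paper: Table~\ref{BP} lists four boundary vertices $A_0,\dots,A_3$, and the text notes $L_{k+4}=L_k$). Each $L_k$ is a $(-3)$-curve, so $S_i^2=4\cdot(-3)+2\cdot 4=-4$, confirming the second item. The sentence ``meets each connected component of the desingularization at the cusps in three points'' should be read as $F_1\cdot L_k=3$ for each irreducible component $L_k$; hence $S_i\cdot F_1=4\cdot 3=12$. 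Your global consistency check then reads $\sum_i S_i\cdot F_1=10\cdot 12=120$, which matches $30$ components of $F_1$ each isomorphic to $X(3)$ (which has $4$ cusps), giving $30\cdot 4=120$ boundary intersections --- not the $90$ you wrote. With this correction your argument goes through; the paper simply asserts the numbers.
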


Consider the divisor
\[
D':= \frac{1}{5} \left ( \sum_{i=1}^{10} S_i + 2F_1\right ).
\]

\begin{lemma} 
\label{lemma:canonical}
The divisor $D'$ is nef and it agrees with the canonical divisor. Its
self-intersection number is given by
\[
D'\cdot D'=8.
\]
\end{lemma}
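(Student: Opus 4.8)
The plan is to settle the three claims in turn. The self-intersection number is a direct computation from Lemma~\ref{lemma:intnum}: expanding by bilinearity, $25\,(D')^{2}=\bigl(\sum_{i=1}^{10}S_i+2F_1\bigr)^{2}=\bigl(\sum_i S_i\bigr)^{2}+4\sum_i(S_i\cdot F_1)+(2F_1)^{2}$, and since $S_i\cdot S_j=0$ for $i\ne j$ while $S_i^{2}=-4$, this equals $-40+4\cdot 10\cdot 12+4\cdot(-60)=-40+480-240=200$, so $(D')^{2}=8$.

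For the identification $D'=K_{\widetilde H(3)}$, I would use two ingredients. First, on the open surface $H(3)$ the two-form $dz_1\wedge dz_2$ exhibits the canonical bundle as $\mathcal L^{\otimes 2}$, where $\mathcal L$ is the Hodge line bundle whose sections are the weight $(1,1)$ Hilbert modular forms. Second, the cusp singularities of $\overline H(3)$ are resolved by cycles of smooth rational curves, so the standard adjunction computation applies: for a component $E\subset S_i$ one has $K_{\widetilde H(3)}\cdot E=-2-E^{2}$ and $\bigl(\sum_j S_j\bigr)\cdot E=S_i\cdot E=E^{2}+2$, whence $\bigl(K_{\widetilde H(3)}+\sum_i S_i\bigr)\cdot E=0$ for every such $E$ and therefore $K_{\widetilde H(3)}+\sum_i S_i=\pi^{*}K_{\overline H(3)}=2\,\pi^{*}\mathcal L$, with $\pi\colon\widetilde H(3)\to\overline H(3)$ the desingularization. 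It then remains to express $\pi^{*}\mathcal L$ through the divisors $S_i$ and $F_1$, which is precisely the content of van der Geer's computation of the canonical class of a desingularized Hilbert modular surface (the Hirzebruch-type relation for $\mathcal L$ via the modular curve $F_1$ and the cusp cycles); this amounts to a relation of the form $5\,\pi^{*}\mathcal L=3\sum_i S_i+F_1$, so that $K_{\widetilde H(3)}=2\,\pi^{*}\mathcal L-\sum_i S_i=\tfrac15\bigl(\sum_i S_i+2F_1\bigr)=D'$. As a consistency check this gives $K_{\widetilde H(3)}^{2}=(\pi^{*}K_{\overline H(3)})^{2}+\bigl(\sum_i S_i\bigr)^{2}=K_{\overline H(3)}^{2}-40$, matching $(D')^{2}=8$ via the volume computation $K_{\overline H(3)}^{2}=48$.

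For nef-ness, I would exploit that $D'$ is an \emph{effective} $\mathbb Q$-divisor: then $D'\cdot C\ge 0$ is automatic for every irreducible curve $C$ not contained in $\mathrm{Supp}(D')=\bigcup_i S_i\cup F_1$, so only the finitely many components of the cusp cycles $S_i$ and the thirty $(-2)$-curves constituting $F_1$ remain. For a $(-2)$-component $G\subset F_1$ one needs $\sum_i S_i\cdot G\ge 4$ — the average of $\sum_i S_i\cdot G$ over the thirty components equals $120/30=4$ by Lemma~\ref{lemma:intnum}, and equality holds componentwise by the symmetry of the configuration — and for a component $E\subset S_i$ with $E^{2}=-b$ one needs $2\,F_1\cdot E\ge b-2$; both follow from the explicit incidence data on p.~193 of \cite{VDG}. (Equivalently, once $\widetilde H(3)$ is known to be a minimal surface of general type, nef-ness of $K=D'$ is immediate.) The main obstacle is the combination of the last two steps: both the identification with $K$ and the nef inequalities genuinely use the fine boundary geometry of $\widetilde H(3)$ — the precise continued-fraction shape of the cusp-resolution cycles, the incidences of $F_1$ with their components, and the interpretation of $F_1$ as a Hirzebruch--Zagier curve — which we import from \cite{VDG}; the bare intersection numbers of Lemma~\ref{lemma:intnum} alone do not suffice for either.
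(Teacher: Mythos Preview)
Your proposal is correct, and your direct computation of $(D')^{2}=8$ from Lemma~\ref{lemma:intnum} is a pleasant addition: the paper simply points to Example~7.5 of \cite{VDG} for both the identification $D'=K_{\widetilde H(3)}$ and the self-intersection, whereas you verify the latter by hand. For the identification with the canonical class your sketch via $K+S=2\pi^{*}\mathcal L$ and the Hirzebruch--Zagier relation $5\pi^{*}\mathcal L=3S+F_1$ is fine but, as you yourself note, that relation is precisely what one extracts from \cite{VDG}; so on this point you end up at the same citation as the paper, just with more scaffolding. The main methodological difference is in nef-ness: your primary argument checks $D'\cdot C\ge 0$ component by component on $\mathrm{Supp}(D')$, which works but needs the fine incidence data of $F_1$ with the cusp-resolution cycles from \cite{VDG}; the paper instead uses the one-line fact (which you relegate to a parenthetical) that the canonical divisor of a \emph{minimal} surface of general type is automatically nef. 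That route is both shorter and more robust, since it avoids the componentwise bookkeeping altogether --- once $D'=K$ is known from \cite{VDG} and $\widetilde H(3)$ is identified as minimal of general type (Theorem~3.4 of \cite{VDG}), nef-ness is immediate.
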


\begin{proof} 
In example $7.5$ of \cite{VDG} (page 179) it is proved that $D'$ equals
the canonical divisor and its self-intersection number is
computed. The fact that it is nef follows from the fact that
$\widetilde{H}(3)$ is a minimal surface of general type.
\end{proof}

\begin{remark}
Since $\widetilde{H}(3)$ is a complex smooth projective surface, there
is a unique canonical divisor (which equals the dualizing sheaf).
\end{remark}

Let $M_{2k}(\Gamma(3))$ denote the vector space of modular forms of
parallel weight $2k$ for $\Gamma(3)$. It is the space of global
sections of the line
bundle $\mathcal{O}(k (D' + S))=\mathcal{O}(D' + S)^{\otimes k}$:
\[
M_{2k}(\Gamma(3)) =\Gamma (\mathcal{O}(k (D' + S))),
\]
where $S=\sum_{i=1}^{10} S_i$. Similarly, the space of cusp forms
$S_{2k}(\Gamma(3))$ is given by the divisor $k(D'+S)-S$.  

We want to add some vanishing conditions to $M_{2k}(\Gamma(3))$ such
that the space of forms with these vanishing conditions is empty. Let
$a$ be a positive 
integer, and $G$ be a Hilbert modular form. We say that $G$ vanishes with
order $a$ at the cusp $c_i$ if $G$ is a section of
$\mathcal{O}(k(D'+S)-aS_i)\subset \mathcal{O}(k(D'+S))$. 

Let $G$ be a form which vanishes with order $a$ at all the cusps and
with order 
$a+b$ at the infinity cusp, i.e. it belongs to the space given by the
divisor
\[
E = k(D'+S) -aS -b S_1.
\]
It follows from Lemma~\ref{lemma:intnum} and
Lemma~\ref{lemma:canonical} that $D'\cdot S_i=4$ for all $1\le i \le
10$, so
\[
E \cdot D'=k(D'\cdot D'+S \cdot D')-aS\cdot D' -b S_{1}\cdot D'=48k-40a-4b.
\]
If $b+10a > 12k$, this intersection number is negative and, since $D'$
is nef, the space
of global sections of $\mathcal{O}(E)$ is the zero space. This implies

\begin{thm}
\label{thm:sturm1}
If $G$ is a Hilbert modular form of parallel weight $2k$ for
$\Gamma(3)$ which vanishes with order $a$ at all cusps and with order
$a+b$ at the infinity cusp and $b+10a>12k$, then $G$ is the zero form.
\end{thm}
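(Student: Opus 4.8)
The plan is to argue by contradiction, using the nefness of $D'$ together with the intersection-number computation already carried out in the paragraph preceding the theorem. Suppose $G$ is a nonzero Hilbert modular form of parallel weight $2k$ for $\Gamma(3)$ vanishing to order $a$ at every cusp and to order $a+b$ at the infinity cusp $c_1$. By the dictionary recalled above, $G$ is then a nonzero global section of the line bundle $\mathcal{O}(E)$ on the smooth projective surface $\widetilde{H}(3)$, where $E = k(D'+S) - aS - bS_1$ (note that $S=\sum_i S_i$ already contains $S_1$, so the total prescribed vanishing at $c_1$ is indeed $a+b$).

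The first step is to pass from the section to an effective divisor: a nonzero global section of $\mathcal{O}(E)$ has a zero divisor $\mathrm{div}(G)$ which is effective and linearly equivalent to $E$. Since the intersection pairing on a smooth projective surface depends only on the linear equivalence class, this gives
\[
\mathrm{div}(G) \cdot D' = E \cdot D' = 48k - 40a - 4b,
\]
the last equality being exactly the computation performed above, which uses $D'\cdot D' = 8$ and $D' \cdot S_i = 4$ (Lemmas~\ref{lemma:intnum} and~\ref{lemma:canonical}). The hypothesis $b + 10a > 12k$ is precisely what makes this negative, since $48k - 40a - 4b = 4(12k - 10a - b) < 0$.

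The second step is to invoke nefness: by Lemma~\ref{lemma:canonical}, $D'$ is nef, hence it pairs non-negatively with every effective divisor on $\widetilde{H}(3)$; in particular $\mathrm{div}(G) \cdot D' \ge 0$. This contradicts the strict negativity just established, so no such nonzero $G$ can exist, i.e.\ $G$ is the zero form.

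I do not expect a genuine obstacle here: all the substantive input has already been assembled, namely the minimal desingularisation $\widetilde{H}(3)$, the identification of the space of modular forms with prescribed cusp vanishing as the space of sections of $\mathcal{O}(E)$, the intersection numbers of the curves $S_i$ and $F_1$, and above all the nefness of the canonical divisor $D'$ (which in turn rests on $\widetilde{H}(3)$ being a minimal surface of general type). The only points meriting a word of care are that a nonzero section of a line bundle on a smooth projective variety really does yield an effective divisor in the same linear-equivalence class, and that intersection numbers are linear-equivalence invariants — both standard — after which the inequality $b+10a>12k$ does all the remaining work.
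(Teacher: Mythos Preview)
Your proof is correct and follows exactly the same approach as the paper: both identify $G$ with a section of $\mathcal{O}(E)$ for $E = k(D'+S) - aS - bS_1$, compute $E\cdot D' = 48k - 40a - 4b$, and use nefness of $D'$ to conclude that a negative intersection with $D'$ forces the space of sections to vanish. You have simply spelled out the contradiction (nonzero section $\Rightarrow$ effective divisor linearly equivalent to $E$ $\Rightarrow$ nonnegative pairing with the nef divisor $D'$) that the paper leaves implicit.
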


\begin{coro}
If $G$ is a Hilbert modular form of weight $(k_1,k_2)$, with $k_1
\equiv k_2 \pmod 2$, for $\Gamma(3)$ which vanishes with order $a$ at
all cusps and with order $a+b$ at the infinity cusp and
$b+10a>3(k_1+k_2)$, then $G$ is the zero form.
\end{coro}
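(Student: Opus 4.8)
The plan is to deduce the Corollary from Theorem~\ref{thm:sturm1} by passing from the possibly non-parallel weight $(k_1,k_2)$ to a parallel weight via the Galois conjugate of $G$. First I would record that the principal congruence subgroup $\Gamma(3)$ is cut out by a congruence modulo the \emph{rational} integer $3$, hence is stable under the entrywise action on $\SL_2(\Om_F)$ of the nontrivial element $\tau\in\Gal(F/\Q)$. Therefore the involution of $\mathfrak H\times\mathfrak H$ that interchanges the two factors descends to an automorphism of $H(3)$, extends to $\overline H(3)$ and to $\widetilde H(3)$, permutes the ten cusps, and fixes the cusp $c_1$ at infinity (because $\tau$ fixes $\infty\in\PP^1(F)$). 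Consequently the Galois conjugate $G^\tau$ --- obtained by interchanging the two weights and applying $\tau$ to the Fourier coefficients --- is again a Hilbert modular form for $\Gamma(3)$, now of weight $(k_2,k_1)$, and the hypotheses transport: $G^\tau$ vanishes to order $\ge a$ at every cusp and to order $\ge a+b$ at $c_1$.

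Next I would set $H:=G\cdot G^\tau$. Multiplying automorphy factors, $H$ is a Hilbert modular form for $\Gamma(3)$ of parallel weight $(k_1+k_2,\,k_1+k_2)$; the congruence $k_1\equiv k_2\pmod 2$ ensures that $k:=(k_1+k_2)/2$ is an integer, so $H$ has parallel weight $2k$ in the sense of Theorem~\ref{thm:sturm1}. Vanishing orders of sections add under multiplication, so $H$ vanishes to order $\ge 2a$ at all cusps and to order $\ge 2a+2b$ at $c_1$; in the language of Theorem~\ref{thm:sturm1} this means $H$ is subject to the parameters $a'=2a$ and $b'=2b$.

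Finally I would invoke Theorem~\ref{thm:sturm1}: doubling the hypothesis $b+10a>3(k_1+k_2)$ gives $b'+10a'=2b+20a>6(k_1+k_2)=12k$, whence $H=0$. Since $\widetilde H(3)$ is irreducible, a product of two nonzero sections of line bundles on it is nonzero, so $H=G\cdot G^\tau=0$ forces $G=0$ or $G^\tau=0$; and $G^\tau=0$ is equivalent to $G=0$. Hence $G=0$.

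The only real subtlety --- the step I would be most careful about --- is the transport of the hypotheses to $G^\tau$, and in particular the fact that $\tau$ fixes the distinguished cusp $c_1$: this is exactly what lets the vanishing order at $c_1$ double rather than merely grow by $b$, which is what makes the numerical inequality of Theorem~\ref{thm:sturm1} come out with the constant $3(k_1+k_2)$. The $\tau$-stability of $\Gamma(3)$ and the standard fact that orders of vanishing add under multiplication are the other ingredients, both essentially formal.
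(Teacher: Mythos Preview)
Your argument is correct and is essentially the paper's own proof, which reads in its entirety ``Just apply the previous Theorem to the form $G(z_1,z_2)\cdot G(z_2,z_1)$''; your $H=G\cdot G^\tau$ is exactly this product, and your careful verification that $\tau$ fixes the infinity cusp (so the vanishing order there doubles) is the one point the paper leaves implicit. One small terminological quibble: the operation $G(z_1,z_2)\mapsto G(z_2,z_1)$ permutes the Fourier \emph{indices} by $\tau$ (the coefficient at $\xi$ becomes $a_{\tau(\xi)}$) rather than applying $\tau$ to the coefficient \emph{values}, but this does not affect the argument.
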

\begin{proof}
Just apply the previous Theorem to the form $G(z_1,z_2)\cdot G(z_2,z_1)$.
\end{proof}

To relate the order of vanishing of a modular form at a cusp with the
$q$-expansion we need to compute explicitly the first step of the
desingularization of the cusp. In the case of the infinity cusp, this
implies computing the local ring of the cusp, which is done
in \cite{VDG} Chapter II, Section 2. The stabilizer of the infinity
cusp for $\Gamma(3)$ is given by
\[
\left \{\left(\begin{array}{cc}
\epsilon & \alpha\\
0 & \epsilon^{-1} \end{array} \right) \; : \; \alpha \in 3\Om_F \text{
and }\epsilon \equiv 1 \pmod{3}\right\},
\]
i.e. it is of type $(M,V)=(3 \Om_F, U_F^8)$, where $U_F^8
= \langle \frac{47}{2} + \frac{3}{2}\sqrt{5} \rangle$. The dual of $M$
is given by $M^\vee = \frac{\Om_F^\vee}{3}= \frac{\Om_F}{3\sqrt{5}}$,
so any Hilbert modular form for $\Gamma(3)$ has a $q$-expansion at the
infinity cusp of the form
\[
\sum_{\substack{\xi \gg 0\\ \xi \in \frac{\Om_F}{3\sqrt{5}}}} a_\xi
\exp(\xi z_1 + \tau(\xi) z_2).
\]

Let $M_+$ denote the elements of $M$ which are totally positive, and
consider the embedding of $M_+$ in $(\RR_+)^2$, given by
\[
\mu \mapsto (\mu,\tau \mu).
\]
Denote by $A_k=(A_k^1,A_k^2)$, $k \in \Z$ the vertices of the boundary
of the convex hull of the image of $M_+$, ordered with the condition
$A_{k+1}^1 < A_k^1$ for all $k$. Any pair $(A_{k-1},A_k)$ is a basis
for $M$ as $\ZZ$-module (see \cite{VDG} Lemma 2.1).
This determines
an isomorphism 
\[
M \backslash \CC^2 \to \CC^\times \times \CC^\times,
\]
which maps $z=(z_{1},z_{2})$ to $(u_{k-1},u_k)$, where
\begin{equation}
  \label{eq:1}
  \exp(z_j) = u_{k-1}^{A_{k-1}^j} u_{k}^{A_{k}^j}, \quad \text{ for }j=1,2.  
\end{equation}
Let $\sigma_k$ denote the cone spanned by $A_{k-1}$ and $A_k$, i.e.
\[
\sigma_k=\{s A_{k-1} + tA_k \; : \; s,t \in \R_+\}.
\]
The desingularization of the infinity cusp is obtained by taking a
copy of $\CC^2$ for each element $\sigma_k$ and gluing them together
in terms of the change of basis matrix
(see \cite{VDG} page 31). 

Let $\xi \in M^\vee$ be a totally positive element. Then in the copy
corresponding to $\sigma_k$,
\begin{eqnarray*}
\exp(\trace(\xi z)) &=& \exp(\xi z_1 + \tau(\xi)z_2) = u_{k-1}^{\xi
  A_{k-1}^1+\tau(\xi) A_{k-1}^2}\cdot u_k^{\xi
  A_{k}^1+\tau(\xi)A_{k}^2}=\\ & =& u_{k-1}^{\trace(\xi A_{k-1})} \cdot
u_{k}^{\trace(\xi A_{k})}.
\end{eqnarray*}
Now, we denote by $L_k$, the component of the exceptional divisor
$S_{1}$, over
the infinity cusp, that corresponds to the ray through $A_k$. Observe
that $L_{k+4}=L_{k}$. We have:

\begin{prop}
Let $G$ be a Hilbert modular form with $q$-expansion at infinity
\[
G(z_1,z_2)=\sum_{\substack{\xi \gg 0\\ \xi \in \frac{\Om_F}{3\sqrt{5}}}}
a_\xi \exp(\xi z_1 + \tau(\xi) z_2).
\]
Let $L_k$ be as above. Then $\ord_{L_k}(G) > K$ if and only if $a_\xi=0$
for all $\xi \in M^\vee$, $\xi\gg 0$ with $\trace(\xi A_k)\le K$.
\label{prop:fourier}
\end{prop}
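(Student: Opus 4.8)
The plan is to compute both sides directly in the toric chart $\sigma_k=\langle A_{k-1},A_k\rangle$ of the desingularization at the infinity cusp, in the coordinates $(u_{k-1},u_k)$ of \eqref{eq:1}, where $L_k$ is the divisor $\{u_k=0\}$. The starting point is that, because $G$ is invariant under translation by $M=3\Om_F$, its Fourier expansion \emph{is} the expression of $G$ in this chart: substituting \eqref{eq:1} into the $q$-expansion turns it into the power series
\[
G=\sum_{\substack{\xi\gg 0\\ \xi\in M^\vee}} a_\xi\, u_{k-1}^{\trace(\xi A_{k-1})}\,u_k^{\trace(\xi A_k)},
\]
which is exactly how van der Geer's resolution of the cusp is built (cf.\ \cite{VDG}, Ch.~II, \S2). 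First I would check that this is a bona fide power series: the exponents $\trace(\xi A_{k-1})$ and $\trace(\xi A_k)$ lie in $\ZZ$ since $\xi\in M^\vee$ and $A_{k-1},A_k\in M$, and they are $\ge 1$ because $\xi,\tau(\xi)$ and $A_k,\tau(A_k)$ are all positive reals ($\xi\gg 0$ and $A_k\in M_+$). I would also note that the map from the partial ($M$-level) toric model to $\widetilde H(3)$ is a local isomorphism near the generic point of $L_k$ — the unit group $U_F^8$ acts freely there since $\Gamma(3)$ is torsion-free — so $\ord_{L_k}(G)$ may legitimately be computed in the chart $\sigma_k$.

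The second step is to observe that there is no cancellation among the monomials, so the order can be read off termwise. Since $(A_{k-1},A_k)$ is a $\ZZ$-basis of $M$, the assignment $\xi\mapsto\bigl(\trace(\xi A_{k-1}),\trace(\xi A_k)\bigr)$ is the dual-basis isomorphism $M^\vee\xrightarrow{\ \sim\ }\ZZ^2$; in particular it is injective, so distinct Fourier indices contribute distinct monomials. Hence for each $j\ge 0$ the coefficient of $u_k^{\,j}$ in the displayed expansion (viewed as a power series in $u_k$ with coefficients in $\CC[[u_{k-1}]]$) equals $\sum_{\trace(\xi A_k)=j}a_\xi\,u_{k-1}^{\trace(\xi A_{k-1})}$, which vanishes exactly when $a_\xi=0$ for all $\xi\gg 0$ with $\trace(\xi A_k)=j$. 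Therefore
\[
\ord_{L_k}(G)=\min\{\,\trace(\xi A_k)\ :\ a_\xi\neq 0\,\}
\]
(read as $+\infty$ if $G=0$), and this exceeds $K$ if and only if $a_\xi=0$ for every totally positive $\xi\in M^\vee$ with $\trace(\xi A_k)\le K$, which is the assertion of the proposition.

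The one point that deserves real care is the identification in the first paragraph, namely that the order of vanishing of the modular form $G$ along $L_k$ genuinely coincides with the $u_k$-adic order of the power series above, with no shift coming from the weight. For parallel weight $2k$ this is because $G\,(dz_1\wedge dz_2)^{\otimes k}$ is a section of $\mathcal{O}(k(D'+S))=\bigl(\Omega^2_{\widetilde H(3)}(\log S)\bigr)^{\otimes k}$, and in the chart $\sigma_k$ the form $dz_1\wedge dz_2$ equals, up to a nonzero constant (the regulator-type determinant $A_{k-1}\tau(A_k)-A_k\tau(A_{k-1})$), the translation-invariant generator $\tfrac{du_{k-1}}{u_{k-1}}\wedge\tfrac{du_k}{u_k}$ of $\Omega^2(\log S)$, which has neither a zero nor a pole along $L_k$; so tensoring by its $k$-th power does not change orders along $L_k$. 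For a weight $(k_1,k_2)$ form with $k_1\equiv k_2\pmod 2$ one reduces to the parallel case via $G(z_1,z_2)\,G(z_2,z_1)$, exactly as in the corollary to Theorem~\ref{thm:sturm1}. The local computations underlying all of this are worked out in \cite{VDG}, Ch.~II, \S2, which I would simply cite.
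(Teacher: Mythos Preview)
Your argument is correct and is exactly the fleshed-out version of what the paper leaves implicit: the paper supplies no proof for this proposition, only the change-of-variables computation $\exp(\xi z_1+\tau(\xi)z_2)=u_{k-1}^{\trace(\xi A_{k-1})}u_k^{\trace(\xi A_k)}$ immediately preceding it, from which the statement is meant to be read off. Your additional points---the injectivity of $\xi\mapsto(\trace(\xi A_{k-1}),\trace(\xi A_k))$ ruling out cancellation, the free action of the unit group near the generic point of $L_k$, and the absence of a weight shift via the log-form $\tfrac{du_{k-1}}{u_{k-1}}\wedge\tfrac{du_k}{u_k}$---are the right details to make this rigorous and are consistent with the paper's setup.
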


\begin{table}[h]
\begin{center}
\begin{tabular}{|c|c|}
\hline
Name & Point\\
\hline
$A_0$ & $3(1,1)$\\
$A_1$ & $3(1+\omega,1+\tau(\omega))$\\
$A_2$ & $3(2+3\omega,2+3\tau(\omega))$\\
$A_3$ & $3(5+8\omega,5+8\tau(\omega))$\\
\hline
\end{tabular}
\end{center}
\caption{First boundary points}
\label{BP}
\end{table}

In Table \ref{BP} it is shown a set of nonequivalent boundary points
of the convex  hull of $M_{+}$, where $\omega$ denotes the element
$\frac{1+\sqrt{5}}{2}$. It is clear that they differ by powers of
$\omega^2$, and since the matrix
$\left(\begin{smallmatrix}\omega^2&0\\0&\omega^{-2}\end{smallmatrix}\right)
\in \Gamma_0(3)$, a Hilbert modular form for $\Gamma_0(3)$ will vanish
with the same order in the four components $L_{k}$, $k=0,\dots,3$, of
$S_{1}$. The vanishing condition corresponding to
$L_0$, reads  
\[
\ord_{L_0}(\exp(\xi z))=3\trace(\xi).
\]
In particular, a modular form vanishes at the cusp if and only if
$a_0=0$. 

The above discussion implies the following theorem for $\Gamma_0(3)$.

\begin{thm} 
Let $G$ be a Hilbert modular form of parallel weight $2k$ for
$\Gamma_{0}(3)$ which vanishes with order $a$ at all cusps and whose
Fourier expansion at the infinity cusp is given by
\[
G = \sum_{\substack{\xi \gg 0\\ \xi \in \frac{\Om_F}{3\sqrt{5}}}}
a_\xi \exp(\xi z_1 + \tau(\xi) z_2).
\]
If $a_\xi = 0$ for all $\xi$ with $\trace(\xi) \le 4k-3a$ then $G$
is the zero form.
\label{thm:Sturm2}
\end{thm}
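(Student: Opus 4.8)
The plan is to reduce Theorem~\ref{thm:Sturm2} to Theorem~\ref{thm:sturm1} (equivalently its Corollary) by passing from the congruence subgroup $\Gamma_0(3)$ to the smaller group $\Gamma(3)$, and then translating the divisor-level vanishing condition into a condition on the $q$-expansion via Proposition~\ref{prop:fourier}. First I would observe that any Hilbert modular form $G$ of parallel weight $2k$ for $\Gamma_0(3)$ is in particular a modular form of the same weight for $\Gamma(3)$, so Theorem~\ref{thm:sturm1} (with $b=0$, say, or more carefully with the extra vanishing at $c_1$) applies once I know the order of vanishing of $G$ along the exceptional divisors $S_i$. The point of the preceding discussion is precisely that for $\Gamma_0(3)$ the form vanishes to the \emph{same} order at all ten cusps and, at the infinity cusp, to the same order along each of the four components $L_0,\dots,L_3$ of $S_1$ (because the $A_k$ differ by powers of $\omega^2$ and $\mathrm{diag}(\omega^2,\omega^{-2})\in\Gamma_0(3)$). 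So the hypothesis that $a_\xi=0$ for all totally positive $\xi\in\Om_F^\vee/3$ with $\trace(\xi)\le 4k-3a$ forces, by Proposition~\ref{prop:fourier} applied with $A_0=3(1,1)$ (so $\trace(\xi A_0)=3\trace(\xi)$), that $\ord_{L_0}(G)>4k-3a-3$, hence $\ord_{S_1}(G)\ge 4k-3a-2$, and by the uniformity across cusps the same bound holds at every cusp.

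Next I would feed this into the intersection-number computation. Writing $m$ for the common order of vanishing of $G$ along each $S_i$, the divisor of $G$ (as a section of $\mathcal{O}(k(D'+S))$) is $\ge m S = mS + 0\cdot S_1$, so in the notation of the argument before Theorem~\ref{thm:sturm1} we are in the situation $a=m$, $b=0$, and $E\cdot D' = 48k - 40m$. Since $D'$ is nef (Lemma~\ref{lemma:canonical}), if $E\cdot D'<0$ then $\Gamma(\mathcal{O}(E))=0$, i.e.\ $G=0$. The inequality $48k-40m<0$ is $m>\tfrac{6k}{5}$, and from $m\ge 4k-3a-2$ this is implied by $4k-3a-2>\tfrac{6k}{5}$, i.e.\ roughly $a<\tfrac{14k}{15}$ — which is automatically in the relevant range since $a$ is bounded by the cusp-vanishing coming from the cusp forms description ($a\le k-1$ for cusp forms, and more generally the genuinely interesting regime has $a$ small). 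I would state the bookkeeping carefully: one checks $4k-3a-2>\tfrac{6k}{5}$ holds whenever $a\le k-1$ and $k\ge$ a small explicit constant, and the weight $(2,4)$ case of interest (after squaring, parallel weight $2k$ with $2k=6$, i.e.\ $k=3$) is covered directly, giving the stated numerical Sturm bound $\trace(\xi)\le 168$ once one also accounts for the passage from $\Om_F^\vee/3$ to $\Om_F^\vee$ and the level $6\sqrt5$ versus level $3$ normalisation.

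The main obstacle I anticipate is the precise bookkeeping in the translation between the two formulations: tracking how ``vanishing to order $a$ at all cusps'' on $\Gamma(3)$ interacts with the fact that our actual form lives on a larger group (level $6\sqrt5$, not just $3$), and correctly converting the $q$-expansion condition at the infinity cusp — which is naturally phrased along the single component $L_0$ with multiplier $3\trace(\xi)$ — into the ambient order along the full exceptional divisor $S_1$. One must also be careful that Proposition~\ref{prop:fourier} is a strict inequality ($\ord_{L_k}(G)>K$) and that $\xi$ ranges over $M^\vee=\Om_F^\vee/3$, so the ``$4k-3a$'' in the statement already absorbs a shift by $3$; I would double-check the off-by-one constants here, since the final application to the Consani--Scholten form depends on getting exactly the bound $168$ (and the remark's bound $850$ for level $30$). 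A secondary subtlety is justifying that for the relevant form one really is allowed to take $b=0$ (no extra vanishing at infinity is needed), which is fine because the hypothesis only assumes $a_\xi=0$ up to a trace bound, not genuine higher-order vanishing at $c_1$ beyond the common order $a$. Modulo this combinatorial care, the proof is exactly the nef-intersection argument already carried out before Theorem~\ref{thm:sturm1}, specialised via Proposition~\ref{prop:fourier}.
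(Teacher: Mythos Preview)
Your overall strategy is right---translate the Fourier vanishing into a divisor-level vanishing via Proposition~\ref{prop:fourier} and then invoke Theorem~\ref{thm:sturm1}---but the execution contains two genuine errors that make the argument fail.

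First, you lose the factor of $3$. With $A_0=3(1,1)$ one has $\trace(\xi A_0)=3\trace(\xi)$, so the hypothesis $a_\xi=0$ for $\trace(\xi)\le 4k-3a$ says exactly that $a_\xi=0$ for $\trace(\xi A_0)\le 3(4k-3a)=12k-9a$, and Proposition~\ref{prop:fourier} gives $\ord_{L_0}(G)>12k-9a$, not $>4k-3a-3$. Since $\mathrm{diag}(\omega^2,\omega^{-2})\in\Gamma_0(3)$ equalises the four components, $\ord_{S_1}(G)>12k-9a$.

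Second, and more seriously, you claim this bound propagates to \emph{all} cusps by ``uniformity''. It does not. The uniformity established in the paper is only among the four components $L_0,\dots,L_3$ of the single exceptional divisor $S_1$; nothing forces the vanishing at the other $S_i$ to exceed the hypothesised $a$. Because of this, your plan of taking a common $m$ at every cusp and setting $b=0$ collapses: even granting your miscomputed $m\ge 4k-3a-2$, the inequality $m>6k/5$ you need becomes $14k>15a+10$, which already fails for cusp forms ($a=k-1$) once $k\ge 5$. So the argument as written does not prove the theorem for general $k$.

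The fix is immediate once you keep the given $a$ at all cusps and put the extra vanishing at infinity into $b$: the total order at $S_1$ is $a+b>12k-9a$, so $b+10a>12k$, and Theorem~\ref{thm:sturm1} applies directly. This is the paper's two-line proof. Finally, your third paragraph conflates Theorem~\ref{thm:Sturm2} (level $\Gamma_0(3)$) with the later Theorem~\ref{thm:Sturm} (level $\Gamma_0(6\sqrt5)$); the bound $168$ belongs to the latter and requires a separate norm argument, not an off-by-one adjustment here.
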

\begin{proof}
  If $a_\xi = 0$ for all $\xi$ with $\trace(\xi) \le 4k-3a$, by
  Proposition \ref{prop:fourier}, $G$ vanishes with order greater than  
  $12k-9a$ at the infinity cusp.  Thus the result follows from Theorem
  \ref{thm:sturm1}. 
\end{proof}

\begin{coro}
Let $G$ be a Hilbert modular form of weight $(k_1,k_2)$, with
$k_1\equiv k_2 \pmod 2$, for $\Gamma_{0}(3)$ which vanishes with order
$a$ at all cusps and whose Fourier expansion at the infinity cusp is
given by
\[
G = \sum_{\substack{\xi \gg 0\\ \xi \in \frac{\Om_F}{3\sqrt{5}}}}
a_\xi \exp(\xi z_1 + \tau(\xi) z_2). 
\]
If $a_\xi = 0$ for all $\xi$ with $\trace(\xi) \le
(k_1+k_2)-3a$ then $G$ is the zero form.
\end{coro}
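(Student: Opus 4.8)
The plan is to reduce the statement about a Hilbert modular form $G$ of mixed weight $(k_1,k_2)$ to the parallel-weight case already settled in Theorem~\ref{thm:Sturm2}. Following the device used in the earlier corollary, I would form the auxiliary function
\[
H(z_1,z_2) = G(z_1,z_2)\cdot G(z_2,z_1).
\]
Because $k_1\equiv k_2\pmod 2$, swapping the two arguments of $G$ produces a Hilbert modular form of weight $(k_2,k_1)$ for $\Gamma_0(3)$ (the level is symmetric under the action of $\tau$), so the product $H$ is a Hilbert modular form of parallel weight $k_1+k_2$ for $\Gamma_0(3)$; write $k_1+k_2 = 2k$ with $k=(k_1+k_2)/2$. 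The first step is therefore to check carefully that $H$ really is modular of the asserted parallel weight and level, and that it vanishes at each cusp to order at least $a$: the order of vanishing of $H$ at a cusp is the sum of the orders of $G$ and of $G\circ\tau$ there, and since $G$ vanishes to order $a$ at all cusps, so does $G\circ\tau$ (the involution permutes the ten cusps), giving $\ord(H)\ge 2a$ at every cusp.

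The second step is to translate the hypothesis on the Fourier coefficients of $G$ into a hypothesis on those of $H$. If $G=\sum_{\xi\gg 0} a_\xi\exp(\xi z_1+\tau(\xi)z_2)$, then $G(z_2,z_1)=\sum_{\xi\gg 0} a_\xi\exp(\tau(\xi)z_1+\xi z_2) = \sum_{\eta\gg 0} a_{\tau(\eta)}\exp(\eta z_1+\tau(\eta)z_2)$, so the coefficients of $H$ indexed by $\zeta$ are sums of products $a_\xi a_{\tau(\eta)}$ with $\xi+\eta = \zeta$ and $\xi,\eta\gg 0$. Since $\xi,\eta$ are totally positive, $\trace(\xi)\le\trace(\zeta)$ and $\trace(\eta)\le\trace(\zeta)$; hence if $\trace(\zeta)\le (k_1+k_2)-3a = 2k-3a$, then in particular $\trace(\xi)\le 2k-3a$ for every term. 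By hypothesis $a_\xi=0$ whenever $\trace(\xi)\le (k_1+k_2)-3a$, so every such coefficient of $H$ vanishes. Thus all Fourier coefficients $b_\zeta$ of $H$ with $\trace(\zeta)\le 2k-3a = 4k-3(2a)$ vanish.

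The third step is simply to invoke Theorem~\ref{thm:Sturm2} with the parallel weight $2k$ and vanishing order $2a$ at all cusps: the bound $4k - 3(2a)$ there matches exactly what we have verified, so $H$ is the zero form. Since $H(z_1,z_2)=G(z_1,z_2)G(z_2,z_1)$ and the ring of Hilbert modular forms has no zero divisors (the product of two nonzero holomorphic functions on the connected domain $\mathfrak h\times\mathfrak h$ is nonzero), we conclude $G=0$, as claimed. The only genuinely delicate point is the bookkeeping in the first step — confirming that $G\circ\tau$ is modular for the same group $\Gamma_0(3)$ with the weight indices swapped and vanishes to the same cusp orders — but this is exactly the content already used implicitly in the corollary to Theorem~\ref{thm:sturm1}, so no new ideas are needed; the estimate $\trace(\xi)\le\trace(\xi+\eta)$ for totally positive $\xi,\eta$ does the rest.
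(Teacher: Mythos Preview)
Your approach --- forming $H(z_1,z_2)=G(z_1,z_2)\,G(z_2,z_1)$ and applying Theorem~\ref{thm:Sturm2} to this parallel-weight form --- is exactly the paper's method. However, your second step contains a genuine error in the trace bookkeeping.

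You correctly observe that a nonzero contribution $a_\xi\,a_{\tau(\eta)}$ to $b_\zeta$ requires $\xi+\eta=\zeta$ with $\xi,\eta$ totally positive, and you record both inequalities $\trace(\xi)\le\trace(\zeta)$ and $\trace(\eta)\le\trace(\zeta)$. But you then use only the first, concluding that $b_\zeta=0$ whenever $\trace(\zeta)\le 2k-3a$; and in the next line you assert $2k-3a=4k-3(2a)$, which is simply false (it would force $2k=3a$). What you have actually shown is vanishing only up to trace $2k-3a$, whereas Theorem~\ref{thm:Sturm2} applied with weight $2k$ and cusp order $2a$ demands vanishing up to trace $4k-6a$.

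The fix is immediate and uses the very observation you already made: since $\trace(\xi)+\trace(\eta)=\trace(\zeta)$, a nonzero term $a_\xi\,a_{\tau(\eta)}$ requires \emph{both} $\trace(\xi)>2k-3a$ and $\trace(\eta)=\trace(\tau(\eta))>2k-3a$, hence $\trace(\zeta)>2(2k-3a)=4k-6a$. Thus $b_\zeta=0$ for all $\zeta$ with $\trace(\zeta)\le 4k-6a$, which is precisely the bound needed. With this correction your argument is complete and coincides with the paper's.
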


\subsection{Moduli interpretation and integral models}

In order to make the computation of the previous section work over
finite fields, we need to use the integral structure of the modular
Hilbert surface and of the modular curve $X(3)$. It comes from their
moduli interpretation. Let us follow the notation of \cite{Burgos}.

We fix $\zeta_3$ a
third-root of unity and we denote by $\delta =(\sqrt {5})^{-1}$
the different of $F$.

An abelian scheme $A \rightarrow S$ of
relative dimension $2$, together with a ring homomorphism
\[
\iota: \Om_F \rightarrow \End(A)
\]
is called an abelian surface with multiplication by $\Om_F$, and is
denoted by the pair $(A,\iota)$. This gives an $\Om_F$-multiplication
in the dual abelian surface $A^\vee$. An element $\mu \in
\Hom(A,A^\vee)$ is called $\Om_F$-linear if $\mu \iota(\alpha) =
\iota(\alpha)^\vee \mu$ for all $\alpha \in \Om_F$. Denote by
$\bfP(A)$ the sheaf for the \'etale topology on $\Sch/S$ defined by 
\[
\bfP(A)_T = \{\lambda: A_T \rightarrow A_T^\vee \; : \; \lambda \text{
  is symmetric and $\Om_F$-linear}\},
\]
for all $T \rightarrow S$. The subsheaf $\bfP(A)^+$ is the subsheaf of
polarizations in $\bfP(A)$. The pair $(A,\iota)$ is said to satisfy
the Deligne-Pappas condition, denoted by (DP), if the canonical
morphism of sheaves 
\[
A \otimes_{\Om_F} \bfP(A) \mapsto A^\vee
\]
is an isomorphism. In this case, $\bfP(A)$ is a locally constant sheaf
of projective $\Om_F$-modules of rank $1$. 

Since the class number of $\Om_F$ is one, we can restrict to consider
only $\Om_F$-\emph{polarizations}. An $\Om_F$-polarization on a pair
$(A,\iota)$ is a morphism of $\Om_F$-modules $\psi:\Om_F \rightarrow
\bfP(A)_S$ taking $\Om_F^+$ to $\bfP(A)^+$ such that the natural
homomorphism
\[
A \otimes_{\Om_F} \Om_F \rightarrow A^\vee, \quad a \otimes
\alpha \mapsto \psi(\alpha)(a)
\]
is an isomorphism. 

Suppose $S$ is a scheme over $\Spec \ZZ[1/3]$. A level $3$-structure
on an abelian surface $A$ over $S$ with real multiplication by $\Om_F$
is an $\Om_F$-linear isomorphism
\[
\varphi:(\Om_F/3)^2_S \rightarrow A[3] 
\]
between the constant group scheme defined by $(\Om_F/3)^2$ and the
$3$-torsion of $A$. 

\begin{thm}
The moduli problem ``Abelian surfaces over $S$ with real
multiplication by $\Om_F$ satisfying (DP) condition, $\Om_F$-polarization
and level 
$3$-structure'' is represented by a regular algebraic scheme
${\mathcal H}(3)$
which is flat and of relative dimension two over $\Spec
\ZZ[1/3,\xi_3]$. Furthermore, it is smooth over $\Spec
\ZZ[1/15,\xi_3]$.
\end{thm}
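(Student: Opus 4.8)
The plan is to realise ${\mathcal H}(3)$ as a finite étale level cover of the Hilbert modular moduli stack equipped with the Deligne--Pappas integral structure, and then to read off all the asserted properties from the known structure of that stack together with étale descent.

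First I would treat the moduli problem without level structure: over a scheme $S$, classify pairs $(A,\iota)$ as in the statement satisfying (DP), together with an $\Om_F$-polarization $\psi$ (we may restrict to $\Om_F$-polarizations since $\Om_F$ has class number one). By the work of Rapoport and of Deligne--Pappas, and following the presentation in \cite{Burgos}, this moduli problem is representable by a separated Deligne--Mumford stack $\mathscr{M}$ which is regular, flat, and of relative dimension $2$ over $\Spec\ZZ$, and which is smooth over $\Spec\ZZ[1/5]$ (over the locus where $\Disc_F=5$ is invertible the Rapoport condition is automatically satisfied). Imposing (DP) rather than the weaker Rapoport condition is exactly what guarantees regularity at the prime $5$, where $\mathscr{M}$ fails to be smooth over $\ZZ$.

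Next I would add the level-$3$ structure; this forces $3$ to be invertible. Over $\ZZ[1/3]$ the $3$-torsion $A[3]$ is a finite étale group scheme, locally free of rank $2$ over $\Om_F/3$, so the functor on $\mathscr{M}\times_{\ZZ}\ZZ[1/3]$ sending $(A,\iota,\psi)$ to the set of $\Om_F$-linear isomorphisms $\varphi\colon(\Om_F/3)^2_S\xrightarrow{\ \sim\ }A[3]$ is represented by a scheme finite and étale over $\mathscr{M}\times_{\ZZ}\ZZ[1/3]$. Via the Riemann form a full level-$3$ structure also encodes a trivialisation involving the group scheme $\mu_3$, so its mere existence forces $\zeta_3$ into the base, and the representing object is naturally considered over $\Spec\ZZ[1/3,\zeta_3]$. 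I would then invoke rigidity: for $N\ge 3$ an automorphism of an abelian scheme which is the identity on the $N$-torsion is the identity (already $[-1]$ acts as $-1\not\equiv 1\pmod{3}$ on $A[3]$ and hence does not preserve $\varphi$), so the level-$3$ moduli problem has no nontrivial automorphisms and is represented by an honest scheme ${\mathcal H}(3)$, finite and étale over $\mathscr{M}\times_{\ZZ}\ZZ[1/3,\zeta_3]$.

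Finally I would transport the properties along this finite étale morphism: since ${\mathcal H}(3)\to\mathscr{M}\times_{\ZZ}\ZZ[1/3,\zeta_3]$ is étale, regularity, flatness, and relative dimension $2$ over $\ZZ[1/3,\zeta_3]$ all pass from the target to ${\mathcal H}(3)$; and after inverting $5=\Disc_F$ in addition, i.e. over $\Spec\ZZ[1/15,\zeta_3]$, the target is smooth over the base, whence so is ${\mathcal H}(3)$. The only substantive ingredient is the regularity of the Deligne--Pappas moduli space at $p=5$, which is the hard part and which I would simply quote (from \cite{Burgos}, following Deligne--Pappas); everything else is formal étale descent of properties together with the classical rigidity lemma for level structures. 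The one point worth checking carefully is the compatibility of the $\Om_F$-polarization normalisation with the (DP) condition in characteristic $5$, but this is precisely the situation analysed in \cite{Burgos}.
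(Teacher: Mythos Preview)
Your sketch is essentially the standard argument and is correct in outline; the paper itself does not give a proof but simply cites \cite{Go} (Theorem~2.17 and Lemma~5.5), so what you have written is roughly what lies behind that citation: build the Deligne--Pappas integral model of the Hilbert modular stack, observe it is regular and flat of relative dimension two over $\Spec\ZZ$ and smooth away from the discriminant $5$, then pass to a finite \'etale level-$3$ cover and use rigidity for $N\ge 3$ to obtain an honest scheme.

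One small inaccuracy worth flagging: in the paper's formulation a level-$3$ structure is merely an $\Om_F$-linear isomorphism $(\Om_F/3)^2_S\to A[3]$, with \emph{no} imposed compatibility with the Weil pairing. Consequently the moduli problem is already defined over $\ZZ[1/3]$, and the presence of $\zeta_3$ in the base is not forced by the existence of a level structure in the way you suggest. Rather, as the remark following the theorem explains, $\mathcal{H}(3)$ has eight geometric components (indexed by $(\Om_F/3)^\times$, coming from the possible values of the Weil pairing), and one passes to $\ZZ[1/3,\zeta_3]$ so that these components are defined over the base. This does not affect the validity of your argument---base-changing a regular flat scheme along the \'etale map $\ZZ[1/3]\to\ZZ[1/3,\zeta_3]$ preserves all the stated properties---but your explanation of \emph{why} $\zeta_3$ appears is off.
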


\begin{proof}
See \cite{Go} Theorem 2.17, p. 57;Lemma 5.5, p. 99.
\end{proof}

\begin{remark}
The scheme ${\mathcal H}(3)$ is not geometrically irreducible, it has
$\# (\Om_F/3)^\times=8$ connected components over $\bar{\Q}$. In fact,
the 8 components are defined over $\Spec \ZZ[1/15,\xi_3]$. This
definition is not the same as the one given in \cite{Rapoport} (which
is connected), it is a topological cover of degree $4$ of it. Let $S$
be a scheme over $\ZZ[1/3]$, then the abelian scheme $A$ has a Weil
pairing $e_3:A[3] \times A^\vee[3] \to \mu_3$, which satisfies
$e_3(\alpha a,b) = e_3(a,\alpha b)$. There exists an $\Om_F$-bilinear
form $e_{\Om_F}:A[3]\times A^\vee[3] \to
(\delta^{-1}/3\delta^{-1})(1)$ such that $e_3 =
\Trace(e_{\Om_F})$. Any element in $\lambda \in \bfP(A)_T$ defines a
homomorphism between $A$ and $A^\vee$ which is trivial on $A[3]$ if
and only if $\lambda \in 3 \bfP(A)_T$ for any morphism $T \rightarrow
S$. Since $e_{\Om_F}$ is non-degenerate, $\bfP(A)\otimes_{\Om_F}
\Lambda^2_{\Om_F}A[3] = \delta^{-1}/3 \delta^{-1}(1)$.

Any element $\phi \in \Isom(\mu_3,\ZZ/3)$ gives an isomorphism between
$\delta^{-1}/3 \delta^{-1}(1)$ and $\delta^{-1}/3 \delta^{-1}$. In
\cite{Rapoport} the only level $3$-structures considered are the
$\varphi$ such that in the following diagram
\[
\xymatrix{
\bfP(A)\otimes_{\Om_F}\Lambda^2_{\Om_F}A[3]
\ar@{->}[d]^{\sim}_{e_{\Om_F} \otimes \psi}
\ar@{=}[r] &  \delta^{-1}/3\delta^{-1}(1) \ar@{..>}[d] \\
\delta^{-1} \otimes_{\Om_F} \Lambda^2_{\Om_F}(\Om_F/3) \ar@{=}[r] &
\delta^{-1}/3\delta^{-1}
}
\]
the vertical dotted arrow is given by an element $\phi \in
\Isom(\mu_3,\ZZ/3)$. Since all such maps differ by (multiplication by)
an element in $(\Om_F/3)^\times$, the two assertions follow.
\end{remark}


\begin{remark}
The group $\GL_2(\Om_F/3)$ acts on ${\mathcal H}(3)$, where an element
$M$ send a level $3$-structure $\varphi$ to the level $3$-structure
$\varphi \circ M$. The subgroup $\SL_2(\Om_F/3)$ acts on each
connected component of ${\mathcal H}(3)\otimes \bar{\Q}$, while the
subgroup $H_{F}=\{\left( \begin{smallmatrix} \alpha & 0\\0 &
  1\end{smallmatrix} \right) \text{ such that }\linebreak \alpha \in
  (\Om_F/3)^\times\}$ acts transitively on the set of connected
  components.
\end{remark}

\begin{thm} \label{thm:3}
There is a toroidal compactification $h_3:\widetilde{{\mathcal
 H}}(3) \to \ZZ[\zeta_3,1/3]$ of ${\mathcal H}(3)$ that
is smooth at infinity. The complement $\widetilde{{\mathcal H}}(3)
\backslash {\mathcal H}(3)$ is a relative divisor with normal crossings.
\end{thm}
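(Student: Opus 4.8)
The plan is to deduce Theorem~\ref{thm:3} from the arithmetic theory of toroidal compactifications of Hilbert--Blumenthal moduli spaces, following Rapoport~\cite{Rapoport} (and the algebraization techniques of Faltings--Chai) in the form worked out for the Deligne--Pappas moduli problem in Goren's book~\cite{Go}. Recall that ${\mathcal H}(3)$ is a regular scheme, flat over $\Spec\ZZ[1/3,\zeta_3]$ and smooth over $\Spec\ZZ[1/15,\zeta_3]$. First I would recall the structure of the boundary: the cusps of ${\mathcal H}(3)$ are finite in number, and after fixing the level-$3$ data each of them records the degeneration data of a semi-abelian $\Om_F$-scheme whose toric part has maximal rank; since $F$ has class number one and we work with $\Om_F$-polarizations, this data is packaged by a fractional ideal $\id{b}$ of $F$ together with the action of a finite-index subgroup $V$ of the group of totally positive units of $\Om_F$. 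To each such cusp and each $V$-admissible rational polyhedral decomposition $\Sigma$ of the cone $\{\xi\in\id{b}\otimes\RR:\xi\gg 0\}\cup\{0\}\subset\id{b}\otimes\RR\cong\RR^2$, Mumford's construction produces a boundary chart as the completion along its boundary of a suitable relative torus embedding over $\ZZ[1/3,\zeta_3]$.

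The key step is to choose, at each cusp, the decomposition $\Sigma$ to be \emph{smooth}, i.e.\ with every maximal cone spanned by a $\ZZ$-basis of $\id{b}$. In the two-dimensional Hilbert setting this is elementary: it is the Hirzebruch--Jung continued-fraction subdivision, and it is in fact exactly the subdivision recorded at the infinity cusp by the boundary points $A_k$ of \S\ref{sec:desing-sturm-bound} (cf.~\cite{VDG}); one checks in the same elementary way that $\Sigma$ may be taken $V$-invariant. For such $\Sigma$, each boundary chart is, \'etale-locally over the base, the product of a smooth $\ZZ[1/3,\zeta_3]$-scheme with the affine plane, the boundary being the union of the two coordinate hyperplanes. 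Algebraizing and gluing to ${\mathcal H}(3)$ then yields a scheme $\widetilde{\mathcal H}(3)$ proper over $\ZZ[1/3,\zeta_3]$, containing ${\mathcal H}(3)$ as a dense open, smooth in a neighbourhood of $\widetilde{\mathcal H}(3)\setminus{\mathcal H}(3)$, and in which that complement is a relative normal crossings divisor over $\Spec\ZZ[1/3,\zeta_3]$.

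The point that needs care — and the one I expect to be the main obstacle, though it is bookkeeping rather than new geometry — is that the references phrase the toroidal machinery for Rapoport's normalized moduli problem (the one with a $\mu_3$-compatible level structure, geometrically connected over $\Q$), whereas ${\mathcal H}(3)$ is the Deligne--Pappas problem with the degree-$4$ cover of level structures discussed in the remarks above. I would resolve this by observing that over $\ZZ[1/3,\zeta_3]$ the natural morphism ${\mathcal H}(3)\to{\mathcal H}_{\mathrm{Rap}}(3)$ is finite \'etale ($3$ being invertible, the $3$-torsion is \'etale), so the cusps of ${\mathcal H}(3)$ are the preimages of those of ${\mathcal H}_{\mathrm{Rap}}(3)$ and the formal completion of ${\mathcal H}(3)$ along each is a finite \'etale cover of the corresponding toric formal scheme; after refining $\Sigma$ if necessary, such a cover is again of the shape produced by Mumford's construction, so the argument carries over verbatim. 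Finally, although ${\mathcal H}(3)$ is only smooth over $\ZZ[1/15,\zeta_3]$ — the Deligne--Pappas and Rapoport conditions differing at the ramified prime $5$ — this failure of smoothness is an interior phenomenon: at the cusps the degenerating objects are tori, where the two conditions coincide, so $\widetilde{\mathcal H}(3)$ is smooth at infinity over the full base $\ZZ[1/3,\zeta_3]$ and the relative normal crossings assertion holds there.
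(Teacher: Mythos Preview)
Your proposal is correct and follows essentially the same route as the paper, which simply cites Chai (Theorems~3.6 and~4.3) and Rapoport (Theorems~5.1 and~6.7) without further argument. You have supplied the details those references contain---the Hirzebruch--Jung choice of smooth cone decomposition, the \'etale bookkeeping between the Rapoport and Deligne--Pappas level structures, and the observation that the failure of smoothness at~$5$ is interior---so your write-up is a faithful expansion of the paper's one-line proof.
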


\begin{proof}
See \cite{Chai} Theorem 3.6, Theorem 4.3 and \cite{Rapoport} Theorem
5.1 and Theorem 6.7.
\end{proof}

The set of complex points of $\mathcal{H}(3)$ is equal to 8 copies of
the surface $H(3)$ considered in the previous section, while the set
of complex points of $\widetilde {\mathcal{H}}(3)$ is equal to 8
copies of $\widetilde{H}(3)$. 

If we study the moduli problem for $1$-dimensional abelian varieties
(i.e. elliptic curves), we have the advantage that they are already
principally polarized. As in the two-dimensional case, if $S$ is a
scheme over $\ZZ[1/3]$, a level $3$-structure on an elliptic curve $E$
over 
$S$ is a $\ZZ$-linear isomorphism
\[
\varphi:(\ZZ/3)_{S}^2 \rightarrow E[3]
\]
between the constant group scheme defined by $(\ZZ/3)^2$ and the
$3$-torsion of $E$.

\begin{thm} \label{thm:2}
The moduli problem ``elliptic curves over $S$ with level
$3$-structure'' is represented by a smooth affine curve
$\mathcal{Y}(3)$ over 
$\ZZ[1/3]$. Furthermore, the
category ${\mathcal M}_3[1/3]$ of ``generalized elliptic curves over
$S$, that have smooth generic fibres, singular fibres  whose Neron
polygons have 
$3$-sides and with a level $3$-structure'' is a projective smooth scheme
$\mathcal{X}(3)$ over $\ZZ[1/3]$.
\end{thm}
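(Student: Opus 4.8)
The plan is to treat the affine curve $\mathcal{Y}(3)$ by rigidity and deformation theory, and then the compactification $\mathcal{X}(3)$ via the Deligne--Rapoport theory of generalized elliptic curves; both parts are classical, so in the end it suffices to cite the references, but we indicate the shape of the argument. For the affine part, recall first that the moduli stack $\mathcal{M}_{1,1}$ of elliptic curves is smooth over $\Spec\ZZ$ of relative dimension one, by unobstructed deformation theory (for a curve $H^{2}$ of a coherent sheaf vanishes, and the relevant first cohomology group is one-dimensional). The key extra ingredient is Serre's rigidity lemma: for $n\ge 3$, an automorphism of an elliptic curve $E/S$ acting trivially on $E[n]$ is the identity. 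When $n$ is invertible on $S$, $E[n]$ is finite \'etale of rank $n^{2}$, so the functor $T\mapsto\Isom_T((\ZZ/n\ZZ)^{2}_{T},E[n]_{T})$ is represented by a scheme finite \'etale over the base, and the rigidity lemma shows that a full level-$3$ structure kills all automorphisms. Hence over $\ZZ[1/3]$ the moduli problem ``elliptic curve with full level-$3$ structure'' is rigid, so it is represented by an algebraic space, and being separated and quasi-finite over the coarse space $\mathbb{A}^{1}_{j}=\Spec\ZZ[1/3][j]$ (or, alternatively, by Mumford's quasi-projectivity of moduli of polarized abelian varieties with level structure) it is represented by a scheme $\mathcal{Y}(3)$. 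Since forgetting the level structure makes $\mathcal{Y}(3)\to\mathcal{M}_{1,1}\otimes\ZZ[1/3]$ finite \'etale, $\mathcal{Y}(3)$ is smooth over $\ZZ[1/3]$ of relative dimension one; affineness I would read off from explicit Weierstrass equations carrying a basis of $E[3]$ (equivalently, from $\mathcal{Y}(3)$ being finite over $\mathbb{A}^{1}_{j}$ once the stacky points $j=0,1728$ are resolved by the level structure).

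For the compactification I would bring in generalized elliptic curves in the sense of Deligne--Rapoport: proper flat curves $E/S$ with at worst nodal geometric fibres, carrying a commutative group law on the smooth locus $E^{\mathrm{sm}}$ that acts on $E$, whose geometric fibres are either smooth elliptic curves or N\'eron polygons. For a N\'eron $m$-gon one has $E^{\mathrm{sm}}\cong\mathbb{G}_{m}\times\ZZ/m\ZZ$, whose $3$-torsion (over $\ZZ[1/3]$, \'etale-locally) is $(\ZZ/3\ZZ)^{2}$ exactly when $3\mid m$, and which meets every irreducible component of the fibre only when $m=3$; imposing that the level-$3$ structure meet every component --- the requirement that makes the moduli problem separated --- therefore forces the degenerate fibres to be N\'eron $3$-gons, and these are the cusps. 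Following Deligne--Rapoport (and Katz--Mazur), this moduli problem is represented by a scheme $\mathcal{X}(3)$ containing $\mathcal{Y}(3)$ as a dense open and proper over $\ZZ[1/3]$: properness is the valuative criterion, which via semistable reduction reduces to the fact that a full level-$3$ structure over a punctured trait forces the closed fibre of the contracted N\'eron model to be a $3$-gon, so the family extends uniquely. Smoothness over $\ZZ[1/3]$ is already known on $\mathcal{Y}(3)$; near a cusp the Tate-curve description shows the completed local ring is formally smooth over $\ZZ[1/3]$ and that $\mathcal{X}(3)\setminus\mathcal{Y}(3)$ is finite \'etale over $\ZZ[1/3]$. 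Finally projectivity follows because a suitable power of the Hodge bundle $\omega$, extended across the cusps, is relatively ample.

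The step I expect to be the main obstacle is the compactification: producing a genuine \emph{scheme} $\mathcal{X}(3)$, proper over $\ZZ[1/3]$, requires the Deligne--Rapoport formalism of generalized elliptic curves and the contraction functor, and the proof of properness rests on stable reduction together with a careful analysis of how a full level-$3$ structure degenerates. By contrast the affine assertion is comparatively soft, resting only on deformation theory and Serre's rigidity lemma. Since Theorem~\ref{thm:2} enters the present paper only as an input, for full details we refer to Deligne--Rapoport and to Katz--Mazur.
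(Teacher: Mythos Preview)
Your proposal is correct and takes essentially the same approach as the paper. The paper's own proof is simply the one-line citation ``See \cite{Deligne}, Corollary 2.9'', and your sketch is an accurate expansion of the Deligne--Rapoport argument behind that reference.
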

\begin{proof} See \cite{Deligne}, Corollary 2.9. 
\end{proof}
\begin{remark}
  The group $\GL_2(\ZZ/3)$ acts on $\mathcal{X}(3)$, in the same way
  as in the two dimensional case, i.e. an element $M \in \GL_2(\ZZ/3)$
  sends the level $3$-structure $\varphi$ to the level $3$-structure
  $\varphi \circ M$. The subgroup $\SL_2(\ZZ/3)$ acts on each
  connected component of $\mathcal{X}(3) \otimes \bar{\Q}$ while the
  subgroup $H _{\Q}=\{ \left( \begin{smallmatrix} \alpha & 0 \\ 0 &
      1\end{smallmatrix} \right) \text{ such that } \alpha \in
  (\ZZ/3)^\times\}$ acts transitively in the set of connected
  components.
\end{remark}

We want to study the inclusions of $\mathcal{Y}(3)$ into ${\mathcal H}(3)$. If
$E$ is an elliptic curve over $S$, the abelian variety $A_E=E
\otimes_\ZZ \Om_F$ has a canonical $\Om_F$-action $\iota_E:\Om_F
\rightarrow \End(A_E)$. Furthermore,
\[
A_E \simeq E \times_S E,
\]
where the isomorphism depends on chosing a basis for $\Om_F$ as
$\ZZ$-module. The dual abelian variety $A_E^\vee$ is isomorphic to $E
\otimes_\ZZ \delta^{-1}$. Furthermore, $\bfP(A_E) \simeq \delta^{-1}
\simeq \Om_F$ (i.e. $A_E$ has a canonical principal polarization
$\psi_E$), where the isomorphism preserves positivity and the
Deligne-Pappas condition holds (see \cite{Burgos} Lemma 5.10). Let
$(E,\varphi)$ be an elliptic curve with level $3$-structure. 

Consider the natural inclusion $\SL_2(\Z/3) \hookrightarrow
\SL_2(\Om_F/3)$, and let ${\mathcal N}=\{N_i\}_{i=1}^{30}$ be a set
  of representatives for the quotient set $\SL_2(\Om_F/3) /
  \SL_2(\ZZ/3)$.  Each element $N$ of ${\mathcal N}$ gives rise to an
  embedding of $\mathcal{Y}(3)$ into ${\mathcal H}(3)$, which associates to the
  pair $(E,\varphi)$ the element $(A_E,\iota_E,\psi_E, \varphi \circ
  N)$.
  For shortness we abuse notation by writing $\varphi \circ N$. The map
  $\varphi$ extends by $\Om_F$-linearity to a map
\[
\widetilde{\varphi}:(\ZZ/3)^2 \otimes_\ZZ \Om_F = (\Om_F/3)^2\rightarrow A_E[3],
\]
and the element $N$ acts in $(\Om_F/3)^2$. By choosing a
connected component of $\mathcal{Y}(3) \otimes \bar{\QQ}$ and of ${\mathcal
  H}\otimes \bar{\QQ}$, the $30$ embeddings obtained from the set
${\mathcal N}$ (up to composition with an element of $H_{F}$ if
necessary) give us open dense subsets of the $30$ connected components of
the curve $F_1$ 
from the previous section.

\begin{thm}\label{thm:1}
The closed inmersion $\mathcal{Y}(3) \hookrightarrow {\mathcal H}(3)$ of schemes
over $\ZZ[\zeta_3,1/15]$ extends to a closed inmersion $\mathcal{X}(3)
\hookrightarrow \widetilde{\mathcal H}(3)$.
\end{thm}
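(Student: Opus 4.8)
The plan is to reduce the theorem to a local statement at the cusps and settle it using the moduli interpretation of the toroidal boundary of $\widetilde{\mathcal H}(3)$. First one records that over $R=\ZZ[\zeta_3,1/15]$ both $\mathcal X(3)$ and $\widetilde{\mathcal H}(3)$ are smooth and proper over $\Spec R$: for $\mathcal X(3)$ this is Theorem~\ref{thm:2}, and for $\widetilde{\mathcal H}(3)$ it follows from Theorem~\ref{thm:3} together with the smoothness of $\mathcal H(3)$ over $\Spec\ZZ[1/15,\zeta_3]$. In particular $\mathcal X(3)_R$ is the smooth proper model of $\mathcal Y(3)$ over $R$, with complement the cuspidal locus $Z$, which is finite étale over $R$. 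Composing the given locally closed immersion $\mathcal Y(3)\hookrightarrow\mathcal H(3)$ with the open immersion $\mathcal H(3)\hookrightarrow\widetilde{\mathcal H}(3)$ (Theorem~\ref{thm:3}) yields a locally closed immersion $j\colon\mathcal Y(3)\to\widetilde{\mathcal H}(3)$ of $R$-schemes. It then suffices to (a) extend $j$ to an $R$-morphism $\bar j\colon\mathcal X(3)\to\widetilde{\mathcal H}(3)$, and (b) check that $\bar j$ is a closed immersion on a neighbourhood of each point of $Z$; since being a closed immersion is local on the target and $\bar j$ already restricts to the closed immersion $j$ on $\mathcal Y(3)=\bar j^{-1}(\mathcal H(3))$, steps (a) and (b) give the theorem. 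One cannot skip the local analysis: properness of $\widetilde{\mathcal H}(3)$ over $R$ extends $j$ only over the generic points of the divisor $Z$, not necessarily over the finitely many closed points of bad reduction on it, and even where an extension exists it need not a priori remain an immersion there.

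For (a) and (b), fix a point of $Z$, i.e.\ a cusp. Étale-locally near it, $\mathcal X(3)$ is a smooth affine $R$-curve with a coordinate $q$, the cusp lying at $q=0$, over which the identity component of the universal generalized elliptic curve is a semiabelian scheme $\mathcal E^{\circ}$, namely the Tate curve $\mathrm{Tate}(q)$ away from $q=0$ and $\mathbb{G}_m$ along $q=0$, equipped with its canonical level $3$-structure. Then $\mathcal E^{\circ}\otimes_\ZZ\Om_F$ is a semiabelian $R$-scheme, abelian away from $q=0$ and with toric fibre $\mathbb{G}_m\otimes\Om_F$ along $q=0$, carrying the $\Om_F$-action coming from $\otimes\,\Om_F$, the induced level $3$-structure, and the $\Om_F$-polarization inherited from the canonical principal polarization of $E\otimes\Om_F$ (\cite{Burgos}, Lemma~5.10). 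This is a degeneration datum in the sense of Mumford's construction, whose toric parameters are governed by the monodromy pairing, which is (up to the twist by the different $\delta=(\sqrt5)^{-1}$) the trace form $(x,y)\mapsto\mathrm{Tr}_{F/\QQ}(xy)$ on $\Om_F$. By the moduli-theoretic description of the toroidal boundary of $\widetilde{\mathcal H}(3)$ (Mumford's construction for Hilbert--Blumenthal abelian schemes; \cite{Chai}, \cite{Rapoport}, and Theorem~\ref{thm:3}), this datum is classified by a morphism from the chosen étale neighbourhood to $\widetilde{\mathcal H}(3)$ which agrees with $j$ away from $q=0$. Since $\Hom(-,\widetilde{\mathcal H}(3))$ is an étale sheaf and $\mathcal Y(3)$ together with these étale neighbourhoods of the points of $Z$ form an étale cover of $\mathcal X(3)$, these morphisms and $j$ glue to the desired $\bar j\colon\mathcal X(3)\to\widetilde{\mathcal H}(3)$.

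Finally one checks (b) in a toric boundary chart $\Spec R[u_{k-1},u_k]$ of $\widetilde{\mathcal H}(3)$ attached to a cone $\sigma_k=\langle A_{k-1},A_k\rangle$ as in Section~\ref{sec:desing-sturm-bound}, whose integral structure is supplied by \cite{Chai}, \cite{Rapoport}. There $\bar j$ is, on the étale neighbourhood above, a monomial map $u_{k-1}\mapsto q^{a}$, $u_k\mapsto q^{b}$, the exponents being read off from the ray singled out by the degeneration; equivalently, near the cusp $\mathcal X(3)$ maps isomorphically onto the closure of a torus orbit, a smooth coordinate curve (or graph) through the cusp. That $\bar j$ is a closed immersion there reduces to primitivity of the exponent vector $(a,b)$, so that the induced map on completed local rings $R[[u_{k-1},u_k]]\to R[[q]]$ is surjective; this is where one uses the explicit boundary points of Table~\ref{BP} (e.g.\ $A_0=3(1,1)$ on the diagonal ray). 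Over the generic fibre this recovers the classical identification of $\mathcal X(3)_{\bar\QQ}$ with a connected component of the smooth Hirzebruch--Zagier curve $F_1$ of \cite{VDG}. The main obstacle is precisely this matching step: reconciling the explicit cusp resolutions and the curve $F_1$ of \cite{VDG} with the moduli-theoretic boundary charts of \cite{Chai}, \cite{Rapoport}, and verifying that the degeneration datum of $\mathcal E^{\circ}\otimes_\ZZ\Om_F$ sits, integrally, on a single ray of the fan with primitive direction vector.
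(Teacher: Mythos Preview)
Your overall strategy matches the paper's: reduce to a local extension problem at each cusp and settle it using the explicit toroidal boundary charts. The execution, however, is different in an instructive way.

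The paper avoids your appeal to a moduli interpretation of the toroidal boundary and to \'etale descent. Instead it proceeds as follows. After reducing to the diagonal embedding (the other $29$ differ by the $\SL_2(\Om_F/3)$-action), it fixes one cusp $c$ of $X(3)$ and an affine neighbourhood $\mathcal U=\Spec A$ of the horizontal divisor $\overline c$, with $\overline c$ cut out by a single function $f$. The key device is an elementary gluing lemma: if morphisms $\Spec A_f\to X$ and $\Spec \widehat A_{(f)}\to X$ agree on $\Spec(\widehat A_{(f)})_f$, they glue to $\Spec A\to X$; the proof is a two-line computation showing $A=A_f\cap \widehat A_{(f)}$. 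On the formal side the paper uses Rapoport's explicit chart $\Spec\big(\ZZ[\zeta_3,\tfrac{1}{15}][u_0,u_1][[u_0u_1]]\big)\to\widetilde{\mathcal H}(3)$ and the $q$-expansion principle on both sides to identify the induced ring map: writing $\chi^{\xi}=u_0^{\trace(\xi A_0)}u_1^{\trace(\xi A_1)}$ and pulling back along the diagonal to $q^{\trace(\xi)}$, one reads off $u_0\mapsto q$, $u_1\mapsto 1$. This gives the extension at once, and the explicit form of the map on completed local rings (visibly surjective) is what stands in for your primitivity argument.

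What each approach buys: your route via Mumford degeneration data and \'etale descent is conceptually clean and explains \emph{why} the diagonal Tate curve tensored with $\Om_F$ lands on the ray through $A_0$, but it leans on a universal property of the toroidal compactification over non-complete bases that is delicate to state precisely (Faltings--Chai/Lan formulate it via formal or log schemes). The paper's route sidesteps this entirely: the formal gluing lemma plus the explicit chart from \cite{Rapoport} make the argument self-contained, and the trace computation with $A_0,A_1$ from Table~\ref{BP} replaces your abstract ``primitive exponent vector'' by the concrete pair $(1,0)$. Your step~(b) is more explicitly articulated than in the paper, which leaves the closed-immersion check implicit in the computation $u_0\mapsto q$, $u_1\mapsto 1$.
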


\begin{proof}
Following the referee's advice, we omit the proof of this Theorem.
\end{proof}

Theorem \ref{thm:1} has the following direct consequences.  Let
$\widetilde {\mathcal{H}}$ be one of the irreducible components of
$\widetilde {\mathcal{H}}(3)$ over $\Spec \ZZ[1/15,\xi_3]$. The set of
complex points of $\widetilde {\mathcal{H}}$ agrees with the surface
$\widetilde H(3)$ of section \ref{sec:desing-sturm-bound}. Let $Z$ be
any irreducible component of the divisor $D'$ of $\widetilde H(3)$
introduced in the same section. Then $Z$ is defined over
$\QQ(\zeta_{3})$. Let $\mathcal{Z}=\overline {Z}$ be the Zariski
closure of $Z$ in $\widetilde {\mathcal{H}}$.

\begin{coro} \label{cor:irredfib}
  For every prime $\id{p}$ of $\ZZ[1/15,\zeta_{3}]$, the vertical
  cycle $\mathcal{Z}_{\id{p}}$ is irreducible.
\end{coro}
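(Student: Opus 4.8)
The plan is to handle the two kinds of irreducible component of $D'$ separately, but in both cases to reduce the statement to one elementary fact: if $\pi\colon\mathcal V\to T$ is a smooth proper morphism with $T$ connected and some fibre of $\pi$ geometrically connected, then every fibre of $\pi$ is geometrically connected — because $\pi_*\mathcal O_{\mathcal V}$ is then a finite locally free $\mathcal O_T$-algebra whose rank counts the geometrically connected components of the fibres and hence is the constant $1$ — and a geometrically connected smooth scheme over a field is normal, hence geometrically irreducible. Thus the whole argument comes down to identifying $\mathcal Z$, in each case, with (a base change to $T=\Spec\ZZ[1/15,\zeta_3]$ of) a smooth proper scheme whose generic fibre is $Z$ itself, together with the observation that $Z$ is in every case geometrically a smooth genus-zero curve, hence geometrically connected.

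First I would record that $5D'=\sum_{i=1}^{10}S_i+2F_1$, so that the irreducible components of the support of $D'$ fall into two classes: the connected components of $F_1$, and the irreducible components of the exceptional cycles $S_i$. For a component $Z$ of $F_1$, I would invoke Theorem~\ref{thm:1}: the embedding $\mathcal Y(3)\hookrightarrow\mathcal H(3)\subset\widetilde{\mathcal H}(3)$ corresponding to $Z$ extends to a closed immersion $\mathcal X(3)\hookrightarrow\widetilde{\mathcal H}(3)$ over $\ZZ[\zeta_3,1/15]$. I would then identify $\mathcal Z$ with the image of the connected component of $\mathcal X(3)\otimes\ZZ[1/15,\zeta_3]$ that meets this copy of $\mathcal Y(3)$: both are integral closed subschemes of $\widetilde{\mathcal H}\otimes\ZZ[1/15,\zeta_3]$ with the same generic fibre $Z$, hence with the same generic point, hence equal. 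Since $\mathcal X(3)$ is smooth and projective over $\ZZ[1/3]$ by Theorem~\ref{thm:2}, this component is smooth and proper over $T$, and its generic fibre $Z\cong X(3)$ is the geometrically connected genus-zero modular curve; the elementary fact above then yields that $\mathcal Z_{\mathfrak p}$ is geometrically irreducible for every $\mathfrak p$.

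For a component $Z$ of some $S_i$, I would use Theorem~\ref{thm:3}: $\widetilde{\mathcal H}(3)\setminus\mathcal H(3)$ is a relative normal crossings divisor over $\ZZ[\zeta_3,1/3]$, so each of its irreducible components is smooth over the base, and proper since $\widetilde{\mathcal H}(3)$ is a (toroidal) compactification. By hypothesis $Z$ is one of the components of $\widetilde H(3)\setminus H(3)$, defined over $\QQ(\zeta_3)$, so its closure $\mathcal Z$ is (a base change to $T$ of) one of these boundary components; hence $\mathcal Z$ is smooth and proper over $T$ with generic fibre $Z$, which is a component of a Hirzebruch--Jung resolution cycle of a cusp and so geometrically a $\PP^1$. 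The elementary fact again finishes this case.

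The step I expect to be the main obstacle is not any deep input but the careful identification of $\mathcal Z$ with the integral model supplied by Theorems~\ref{thm:1} and~\ref{thm:3}: one must be sure that the moduli-theoretic, respectively toroidal, integral model really is the Zariski closure inside $\widetilde{\mathcal H}$ of its generic fibre, so that one is proving the irreducibility of the right scheme. This follows from the uniqueness of the reduced irreducible closed subscheme with a given generic point, but it is worth spelling out, as is the verification that in both cases the generic fibre is a geometrically connected smooth curve, so that smoothness of the total model already forces every special fibre to be geometrically irreducible.
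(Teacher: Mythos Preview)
Your proposal is correct and follows essentially the same approach as the paper: you split into the two cases (components of $F_1$ versus components of the boundary $S$), invoke Theorems~\ref{thm:1} and~\ref{thm:2} in the first case and Theorem~\ref{thm:3} in the second, exactly as the paper does. The paper's own proof is a two-sentence citation of these same theorems; your write-up simply makes explicit the connectedness-of-fibres argument (via $\pi_*\mathcal O$) and the identification of $\mathcal Z$ with the Zariski closure of its generic fibre, which the paper leaves implicit.
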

\begin{proof}
  If $Z$ is a component of $S$ this follows directly from Theorem
  \ref{thm:3}. If $Z$ is a component of $F_{1}$ this follows from
  Theorem \ref{thm:1} and Theorem \ref{thm:2}.
\end{proof}

Let $\mathcal{D}'$ be the horizontal divisor of $\widetilde
{\mathcal{H}}$ determined by $D'$.

\begin{coro}
    For every prime $\id{p}$ of $\ZZ[1/15,\zeta_{3}]$, the divisor
    $\mathcal{D}'_{\id{p}}$ of the surface
    $\widetilde{\mathcal{H}}_{\id{p}}$ over $\Spec
    \ZZ[1/15,\zeta_{3}]/\id{p}$ is nef. 
\end{coro}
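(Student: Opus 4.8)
The plan is to reduce the statement to a finite check by using the fact that $\mathcal{D}'$ is, up to the harmless factor $\tfrac15$, an effective divisor. Recall the elementary fact that an effective $\QQ$-divisor $D$ on a smooth projective surface is nef if and only if $D\cdot C\ge 0$ for every irreducible curve $C$ contained in the support of $D$: indeed, any irreducible curve $C$ not contained in the support of $D$ automatically satisfies $D\cdot C\ge 0$ because $D$ is effective. Now $\widetilde{\mathcal{H}}$ is smooth and proper over $\Spec\ZZ[1/15,\zeta_3]$ (Theorem~\ref{thm:3} together with the smoothness of $\mathcal{H}(3)$ over $\ZZ[1/15,\zeta_3]$), so each geometric fibre $\widetilde{\mathcal{H}}_{\id p}$ is a smooth projective surface and $\mathcal{D}'_{\id p}$ an effective $\QQ$-divisor on it. It therefore suffices to show $\mathcal{D}'_{\id p}\cdot C\ge 0$ for $C$ running through the finitely many irreducible components of the support of $\mathcal{D}'_{\id p}$.

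Write $D'=\sum_Z m_Z\,Z$, the sum over its irreducible components over $\CC$ with positive rational coefficients $m_Z$; correspondingly $\mathcal{D}'=\sum_Z m_Z\,\mathcal{Z}$ with $\mathcal{Z}=\overline{Z}$, a horizontal, hence flat, divisor on $\widetilde{\mathcal{H}}$, and $5\mathcal{D}'=\sum_{i=1}^{10}\overline{S_i}+2\,\overline{F_1}$ is integral. By Corollary~\ref{cor:irredfib} each vertical cycle $\mathcal{Z}_{\id p}$ is irreducible, so reduction modulo $\id p$ neither splits nor collapses the $\mathcal{Z}$'s; consequently the irreducible components of the support of $\mathcal{D}'_{\id p}$ are exactly the curves $\mathcal{Z}_{\id p}$. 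We are thus reduced to proving $\mathcal{D}'_{\id p}\cdot\mathcal{Z}_{\id p}\ge 0$ for each irreducible component $Z$ of $D'$.

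For this I would invoke compatibility of the intersection pairing with specialisation: since $\widetilde{\mathcal{H}}$ is smooth and proper over the Dedekind base $\ZZ[1/15,\zeta_3]$ and the integral divisor $5\mathcal{D}'$ as well as each $\mathcal{Z}$ is flat over it, the Euler-characteristic formula for intersection numbers on surfaces shows that $5\,\mathcal{D}'_{\id p}\cdot\mathcal{Z}_{\id p}$ is independent of $\id p$ and equals $5(D'\cdot Z)$ computed on the complex surface $\widetilde{H}(3)$ of Section~\ref{sec:desing-sturm-bound}. By Lemma~\ref{lemma:canonical} the divisor $D'$ is nef over $\CC$, so $D'\cdot Z\ge 0$ for every irreducible curve $Z$, in particular for the components of $D'$. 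Hence $\mathcal{D}'_{\id p}\cdot\mathcal{Z}_{\id p}\ge 0$, and combining with the previous two paragraphs we conclude that $\mathcal{D}'_{\id p}$ is nef.

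The only real subtlety, and the place where the preceding results do the work, is the middle step: one must know that no spurious components appear in the special fibre of $\mathcal{D}'$ and that none of the $\mathcal{Z}$'s become reducible — this is precisely Corollary~\ref{cor:irredfib}, which in turn rests on the smoothness at infinity of the compactification (Theorem~\ref{thm:3}) and on the modular description of $F_1$ through $X(3)$ (Theorems~\ref{thm:1} and~\ref{thm:2}) — and that the specialisation of $1$-cycles is compatible with the intersection product, for which properness and smoothness of $\widetilde{\mathcal{H}}$ over $\ZZ[1/15,\zeta_3]$ and flatness of the horizontal divisors are exactly what is needed.
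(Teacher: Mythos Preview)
Your proof is correct and follows essentially the same approach as the paper: reduce nefness of the effective divisor $\mathcal{D}'_{\id p}$ to nonnegativity of its intersection with its own irreducible components, identify those components with the specializations $\mathcal{Z}_{\id p}$ via Corollary~\ref{cor:irredfib}, and conclude from the nefness of $D'$ over $\CC$ together with preservation of intersection numbers under specialization. Your write-up is more explicit about the hypotheses needed for the specialization argument (smoothness, properness, flatness), but the structure and the key inputs are identical.
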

\begin{proof}
  Since the divisor $\mathcal{D}'_{\id{p}}$ is effective, we only have
  to show that the intersection of $\mathcal{D}'_{\id{p}}$ with any of
  its irreducible components is greater or equal to zero. By Corollary 
  \ref{cor:irredfib}, every irreducible component of
  $\mathcal{D}'_{\id{p}}$ is the specialization of a irreducible
  component of $D'$. Thus the result follows from the fact that $D'$
  is nef and that the intersection product is preserved by
  specialization.  
\end{proof}

\begin{coro}
Let $G$ be a Hilbert modular form of weight $(k_1,k_2)$, with
$k_1\equiv k_2 \pmod 2$, for $\Gamma_0(3)$ whose coefficients generate
a finite field extension $L$ of $\Q$. Let $\id{p}$ be a prime ideal of
$\Om_{L.F[\xi_3]}$ not dividing $15$. If $G$ vanishes with order $a$
at all cusps of ${\bar{\mathcal{H}}}_{\id{p}}$ and if the Fourier
coefficients of its $q$-expansion at the infinity cusp are algebraic
integers satisfying $a_\xi \equiv 0 \pmod{\id{p}}$ for all $\xi$ with
$\trace(\xi) \le (k_1+k_2)-3a$ then all $a_\xi$ are divisible by
$\id{p}$.
\label{coro:localsturmbound}
\end{coro}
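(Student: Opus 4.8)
The plan is to run, essentially verbatim, the proof of Theorem~\ref{thm:Sturm2} and of its corollary, but inside the special fibre $\widetilde{\mathcal H}_{\id p}$ of the integral model rather than on the complex surface $\widetilde H(3)$: the role of the nef divisor $D'$ is taken over by $\mathcal{D}'_{\id p}$, which is nef by the preceding corollary. Three inputs make this legitimate. First, Theorem~\ref{thm:3} supplies a smooth integral toroidal model $\widetilde{\mathcal H}$ over $\Om_{L.F[\zeta_3]}[1/15]$ whose boundary is a relative normal crossings divisor, so each $S_i$ and each component of $D'$ sits in a flat family; by Corollary~\ref{cor:irredfib} these families remain irreducible in every fibre, and consequently the intersection numbers of Lemma~\ref{lemma:intnum} and Lemma~\ref{lemma:canonical} — in particular $\mathcal{D}'\cdot\mathcal{D}'=8$ and $\mathcal{D}'\cdot S_i = 4$ — are preserved by specialisation. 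Second, the explicit integral toroidal chart near the cusp at infinity computed in the proof of Theorem~\ref{thm:1}, with coordinates $u_0,u_1$ and $\chi^\xi$ sent to $u_0^{\trace(\xi A_0)}u_1^{\trace(\xi A_1)}$, is defined over $\Om_{L.F[\zeta_3]}[1/15]$, so Proposition~\ref{prop:fourier} has a verbatim reduction modulo $\id p$. Third, the $q$-expansion principle lets one pass back and forth between $q$-expansions with coefficients in $\Om_{L.F[\zeta_3]}$ and sections of line bundles over the integral model.

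Given these, I would first reduce to parallel weight exactly as in the corollary to Theorem~\ref{thm:Sturm2}: set $\mathcal G(z_1,z_2)=G(z_1,z_2)G(z_2,z_1)$, a Hilbert modular form of parallel weight $k_1+k_2$ for $\Gamma_0(3)$ whose Fourier coefficients are again algebraic integers in $\Om_{L.F[\zeta_3]}$. By the same bookkeeping used over $\CC$, the hypotheses on $G$ imply that $\mathcal G$ vanishes to order at least $2a$ at every cusp of $\bar{\mathcal H}_{\id p}$ and that all its Fourier coefficients $a_\xi$ with $\trace(\xi)$ up to the bound of Theorem~\ref{thm:Sturm2} are divisible by $\id p$. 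By the $q$-expansion principle, $\mathcal G$ extends to a section over $\widetilde{\mathcal H}\otimes\Om_{L.F[\zeta_3]}[1/15]$ of the line bundle attached to this parallel weight, hence reduces to a section $\bar{\mathcal G}$ of the corresponding bundle on $\widetilde{\mathcal H}_{\id p}$. Feeding the divisibility hypotheses into the mod-$\id p$ versions of Proposition~\ref{prop:fourier} and of the proof of Theorem~\ref{thm:Sturm2} — using $A_0 = 3(1,1)$, whence $\trace(\xi A_0)=3\trace(\xi)$, and the $\Gamma_0(3)$-invariance that forces $\bar{\mathcal G}$ to vanish to the same order along all four components $L_0,\dots,L_3$ of $S_1$ — shows that $\bar{\mathcal G}$ is a section of $\mathcal O(E)_{\id p}$ for a divisor $E=k(D'+S)-a'S-bS_1$ satisfying $b+10a'>12k$ (with $2k=k_1+k_2$ and $a'=2a$).

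It then remains to mimic the final intersection computation. By specialisation of intersection numbers, $E\cdot \mathcal{D}'_{\id p}=48k-40a'-4b<0$. Since $\mathcal{D}'_{\id p}$ is nef, any nonzero section of $\mathcal O(E)_{\id p}$ would have effective divisor meeting $\mathcal{D}'_{\id p}$ negatively, which is impossible; hence $\bar{\mathcal G}=0$, i.e.\ every Fourier coefficient of $G(z_1,z_2)G(z_2,z_1)$ is divisible by $\id p$. Finally, each fibre $\widetilde{\mathcal H}_{\id p}$ is smooth and geometrically connected: it is a fibre of one of the geometrically connected components of $\widetilde{\mathcal H}(3)$ over $\Om_{L.F[\zeta_3]}[1/15]$, and a proper flat morphism to a normal base with geometrically connected and reduced generic fibre has all fibres geometrically connected. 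Thus $\widetilde{\mathcal H}_{\id p}$ is geometrically integral, so the vanishing of the product of the two sections $\overline{G(z_1,z_2)}$ and $\overline{G(z_2,z_1)}$ forces one factor to vanish; as these differ by the swap of the two factors of the surface, $\overline{G(z_1,z_2)}=0$, i.e.\ every $a_\xi$ is divisible by $\id p$, which is the assertion.

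The substantive obstacle is the bookkeeping around the boundary, not any new estimate: one must check that the order of vanishing of $\bar{\mathcal G}$ along the exceptional divisor of $\widetilde{\mathcal H}_{\id p}$ is still recovered from the $q$-expansion reduced modulo $\id p$ by the \emph{same} combinatorics as in Proposition~\ref{prop:fourier} — which is exactly what the integral toroidal chart of Theorem~\ref{thm:3} and the computation in the proof of Theorem~\ref{thm:1} provide — and that the special fibres are geometrically integral, which is what the concluding ``product zero implies factor zero'' step needs. Everything else is a transcription of the complex-analytic argument, the only genuine input being that the intersection product is preserved under specialisation in the flat family $\widetilde{\mathcal H}$.
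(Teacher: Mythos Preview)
Your proposal is correct and follows essentially the same approach as the paper: reduce to the special fibre, use that $\mathcal{D}'_{\id p}$ is nef and that intersection numbers specialise, and rerun the argument of Theorems~\ref{thm:sturm1} and~\ref{thm:Sturm2} (including the parallel-weight reduction via $G(z_1,z_2)G(z_2,z_1)$). The paper's proof is a two-sentence sketch invoking exactly these ingredients; you have usefully spelled out the steps it elides, in particular the role of the integral toroidal chart in transporting Proposition~\ref{prop:fourier} to characteristic~$p$ and the geometric integrality of $\widetilde{\mathcal H}_{\id p}$ needed for the final ``product zero implies factor zero'' step.
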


\begin{proof} 
From the last Corollary, we have that the divisor
$\mathcal{D}'_{\id{p}}$ of the surface
$\widetilde{\mathcal{H}}_{\id{p}}$ over $\Spec
\ZZ[1/15,\zeta_{3}]/\id{p}$ is nef, so we can apply the same argument
as in the proof of Theorems \ref{thm:sturm1} and \ref{thm:Sturm2}. The
result follows from the fact that $\mathcal{D}'_{\id{p}}$ is the
specialization of the divisor $D$ consider in such theorems and the
fact that intersection numbers are preserved by specialization.
\end{proof}

\begin{remark}
In practice, if one starts with a form whose coefficients vanish at
all cusps of the complex Hilbert surface with order at least $a$ (for
example if it is a product of cusp forms, as will be the case in the
next section) then it also vanishes at all the cusps of
$\bar{\mathcal{H}}_{\id{p}}$ with order at least $a$.
\end{remark}

\subsection{The case of the form $(\overline{\ff\otimes \chi_{\sqrt{5}}})^3h_2^2 +
(\overline{\tau(\ff\otimes \chi_{\sqrt{5}}})^3h_1^2$ of level $\Gamma_0(6\sqrt{5})$}

We want to apply the results of the last two sections to the Hilbert
cusp form $\hh =(\overline{\ff\otimes \chi_{\sqrt{5}}})^3h_2^2 +
(\overline{\tau(\ff\otimes \chi_{\sqrt{5}}})^3h_1^2$ of Section $2$,
which has weight $(10,10)$ and level $\Gamma_0(6\sqrt{5})$. Assuming
that the $q$-expansion at the infinity cusp of $\ff$ is zero for all
elements of trace smaller than $\tilde{b}+1$ (hence the order of
vanishing at the four lines $L_i$, $i=1,\ldots,3$ is $3 \tilde{b}+3$),
we want to determine which value of $\tilde{b}$ forces the form $\hh$
(which vanishes with order $3$ at all cusps and $9\tilde{b}+9$ at the
four lines) to be the zero form. We start with some general results.




\begin{lemma} 
Let $\id{p},\id{q}$ be two distinct  prime ideals of $F$ relatively prime to $3$.
Then the index of $\Gamma_0(3\id{p}^r\id{q}^s)$ in $\Gamma_0(3)$ is
\[
[\Gamma_0(3):\Gamma_0(3\id{p}^r\id{q}^s)]=\normid{p}^{r-1}(\normid{p}+1)\normid{q}^{s-1}(\normid{q}+1).
\]
\label{lemma:subgroupindex}
\end{lemma}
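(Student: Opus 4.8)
The plan is to reduce the statement to the single prime power computation of Lemma~\ref{lemma:subgroupprimeindex} via the Chinese Remainder Theorem and the surjectivity of reduction modulo an ideal. Throughout, for an ideal $\id{n}$ of $\Om_F$ write $B(\Om_F/\id{n})$ for the subgroup of $\SL_2(\Om_F/\id{n})$ consisting of matrices that are upper triangular, so that $\Gamma_0(\id{n})$ is by definition the preimage of $B(\Om_F/\id{n})$ under reduction $\SL_2(\Om_F)\to\SL_2(\Om_F/\id{n})$.

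First I would record that for any ideal $\id{n}$ the reduction map $\SL_2(\Om_F)\to\SL_2(\Om_F/\id{n})$ is surjective: since $\Om_F/\id{n}$ is a finite ring (a product of finite local rings), $\SL_2(\Om_F/\id{n})$ is generated by elementary matrices, and these lift tautologically to $\SL_2(\Om_F)$. Apply this with $\id{n}=3\id{p}^r\id{q}^s$. As $\Gamma_0(3\id{p}^r\id{q}^s)\subseteq\Gamma_0(3)\subseteq\SL_2(\Om_F)$ and both subgroups are full preimages of their images, the index $[\Gamma_0(3):\Gamma_0(3\id{p}^r\id{q}^s)]$ equals the index of the corresponding images inside $\SL_2(\Om_F/3\id{p}^r\id{q}^s)$.

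Next I would invoke the Chinese Remainder Theorem: since $3,\id{p},\id{q}$ are pairwise coprime, $\Om_F/3\id{p}^r\id{q}^s\cong \Om_F/3\times\Om_F/\id{p}^r\times\Om_F/\id{q}^s$, hence $\SL_2$ of the left-hand ring is the direct product of the three factors $\SL_2(\Om_F/3)$, $\SL_2(\Om_F/\id{p}^r)$, $\SL_2(\Om_F/\id{q}^s)$, and being upper triangular is checked factor by factor. Under this decomposition the image of $\Gamma_0(3)$ is $B(\Om_F/3)\times\SL_2(\Om_F/\id{p}^r)\times\SL_2(\Om_F/\id{q}^s)$, while the image of $\Gamma_0(3\id{p}^r\id{q}^s)$ is $B(\Om_F/3)\times B(\Om_F/\id{p}^r)\times B(\Om_F/\id{q}^s)$. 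The $B(\Om_F/3)$ factors cancel, giving
\[
[\Gamma_0(3):\Gamma_0(3\id{p}^r\id{q}^s)]=[\SL_2(\Om_F/\id{p}^r):B(\Om_F/\id{p}^r)]\cdot[\SL_2(\Om_F/\id{q}^s):B(\Om_F/\id{q}^s)].
\]
Finally, Lemma~\ref{lemma:subgroupprimeindex} gives $|\SL_2(\Om_F/\id{p}^r)|=\normid{p}^r\normid{p}^{2(r-1)}(\normid{p}-1)(\normid{p}+1)$ and $|B(\Om_F/\id{p}^r)|=\normid{p}^r\normid{p}^{r-1}(\normid{p}-1)$, so the first index is $\normid{p}^{r-1}(\normid{p}+1)$, and similarly the second is $\normid{q}^{s-1}(\normid{q}+1)$; multiplying yields the claimed formula.

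I do not expect a genuine obstacle here: the only points needing care are the surjectivity of reduction modulo $\id{n}$ (standard, via elementary generators of $\SL_2$ over a finite ring) and the bookkeeping that identifies $\Gamma_0(3)$ and $\Gamma_0(3\id{p}^r\id{q}^s)$ as preimages of the correct product subgroups, so that the $3$-part of the index genuinely cancels and only the contributions at $\id{p}$ and $\id{q}$ survive.
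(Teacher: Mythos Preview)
Your argument is correct and follows essentially the same route as the paper: reduce modulo the relevant ideal, use the Chinese Remainder Theorem together with surjectivity of reduction on $\SL_2$, and then invoke Lemma~\ref{lemma:subgroupprimeindex} to compute each local index. The only cosmetic difference is that the paper maps directly to $\SL_2(\Om_F/\id{p}^r)\times\SL_2(\Om_F/\id{q}^s)$ and observes that $\Gamma_0(3)$ already surjects onto this product, whereas you carry the $3$-factor along and cancel it explicitly.
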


In particular, 
\begin{equation}
[\Gamma_0(3):\Gamma_0(6\sqrt{5})]=30.
\end{equation}

Let $\hh$ be a modular form of weight $(10,10)$ for $\Gamma_0(6\sqrt{5})$,
which vanishes with order $3$ at all cusps, and with order $\tilde{b}+3$ at the
infinity cusp. We consider its norm to $\Gamma_0(3)$,
\[
G:=\prod_{\gamma \in \; \Gamma_0(6\sqrt{5})\backslash \Gamma_0(3)} \hh|_2[\gamma].
\]
It is a parallel weight $10\cdot 30$ Hilbert modular form for
$\Gamma_0(3)$. Since $\hh$ is a cusp form, looking at its q-expansion
at the different cusps it is easy to see that $G$ vanishes with order
at least $3\cdot 4$ at all cusps and with order at least
$3\cdot(3\tilde{b}+18)$ at the infinity cusp. Then
Theorem~\ref{thm:sturm1} and Corollary~\ref{coro:localsturmbound}
(with $a=12$, $b=9\tilde{b}+54$ and $k=150$) imply that if $\tilde{b}
\ge 181$ then $G$ is the zero form. So we get

\begin{thm} 
Let $\hh$ be a Hilbert modular form over $\overline{\FF}_2$ of
parallel weight $(10,10)$ for $\Gamma_0(6\sqrt{5})$, which vanishes
with order $3$ at all cusps. If its Fourier expansion is given by
\[
\hh= \sum_{\substack{\xi \gg 0\\ \xi \in \Om_F^\vee}} a_\xi \exp(\xi
z_1 + \tau(\xi) z_2). 
\]
with $a_\xi =0$ for all $\xi$ with $\trace(\xi) \le 181$, then $\hh=0$.
\label{thm:Sturm}
\end{thm}

\bibliographystyle{alpha}
\bibliography{al}

\Addresses
\end{document}